\theoremstyle{definition}
\theoremstyle{remark}
\begin{document}

\title[Valuations of ASM]{The $p$-adic valuations of sequences counting 
alternating sign matrices}

\author[X. Sun]{Xinfu Sun}
\address{Department of Mathematics,
Tulane University, New Orleans, LA 70118}
\email{xsun1@math.tulane.edu}

\author[V. Moll]{Victor H. Moll}
\address{Department of Mathematics,
Tulane University, New Orleans, LA 70118}
\email{vhm@math.tulane.edu}

\newtheorem{Definition}{\bf Definition}[section]
\newtheorem{Thm}[Definition]{\bf Theorem}
\newtheorem{Example}[Definition]{\bf Example}
\newtheorem{Lem}[Definition]{\bf Lemma}
\newtheorem{Note}[Definition]{\bf Note}
\newtheorem{Cor}[Definition]{\bf Corollary}
\newtheorem{Conj}[Definition]{\bf Conjecture}
\newtheorem{Prop}[Definition]{\bf Proposition}
\newtheorem{Problem}[Definition]{\bf Problem}
\numberwithin{equation}{section}

\newcommand{\seqnum}[1]{\href{http://www.research.att.com/cgi-bin/access.cgi/as/~njas/sequences/eisA.cgi?Anum=#1}{\underline{#1}}}

\subjclass{Primary 05A10. Secondary 11B75, 11Y55.}

\keywords{Alternating sign matrices, Jacobsthal numbers, valuations.}

\begin{abstract}
The $p$-adic valuations of a sequence of integers counting alternating 
sign symmetric matrices is examined for $p=2$ and $3$. Symmetry properties of 
their graphis produce a new proof of the result that characterizes the indices
that yield an odd number of matrices.
\end{abstract}

\maketitle

\newcommand{\nn}{\nonumber}
\newcommand{\ba}{\begin{eqnarray}}
\newcommand{\ea}{\end{eqnarray}}
\newcommand{\ift}{\int_{0}^{\infty}}
\newcommand{\ifft}{\int_{- \infty}^{\infty}}
\newcommand{\no}{\noindent}
\newcommand{\realpart}{\mathop{\rm Re}\nolimits}
\newcommand{\imagpart}{\mathop{\rm Im}\nolimits}
\newcommand{\mup}[2]{\mu_{#1}({#2})}
\newcommand{\muc}[1]{\mup{3}{#1}}
\newcommand{\nuthree}[1]{\nu_{3}(T(#1))}
\newcommand{\Sc}[1]{S_{3}(#1)}

\section{Introduction} \label{sec-intro}
\setcounter{equation}{0}

The magnificent book {\em Proofs and Confirmations} by David Bressoud 
\cite{bressoud1} tells the story of the {\em Alternating Sign Matrix
 Conjecture}(ASM) 
and its proof. This remarkable result involves the counting functions 
\begin{equation}
T(n) = \prod_{j=0}^{n-1} \frac{(3j+1)!}{(n+j)!} 
\label{T-seq}
\end{equation}
\noindent
and
\begin{equation}
C(n) = \prod_{j=0}^{n-1} \frac{(3j+1)! (6j)! (2j)! }{(3j)! (4j+1)!(4j)!}.
\end{equation}
\noindent
The survey by Bressoud and Propp \cite{bressoud99} describes the 
mathematics underlying  this problem. 

The fact that these numbers are integers is a direct consequence of their 
appearance as counting sequences. Mills, Robbins and Rumsey \cite{mrr-1} 
conjectured that the number of $n \times n$ matrices whose entries are 
$-1, \, 0,$ or $1$, whose row and column sums are all $1$, and such that
in every row, and in every column the non-zero entries alternate in sign is
given by $T(n)$. The first proof of this ASM
conjecture was provided by D. Zeilberger \cite{zeilberger96}. This
proof had the
added feature of being {\em pre-refereed}. Its $76$ pages were subdivided by
the author who provided a tree structure for the proof. An army of
volunteers provided checks for each node in the tree. The request for
checkers can be read in
\begin{center}
{\tt http://www.math.rutgers.edu/~\,zeilberg/asm/CHECKING}
\end{center}

The question of integrality of quotients
of factorials, such as $T(n)$, has been considered by 
D. Cartwright and J. Kupka in
\cite{cart-kupka}.  \\

\begin{Thm}
\label{cart-kup}
Assume that for every integer $k \geq 2$ we have 
\begin{equation}
\sum_{i=1}^{m} \left\lfloor{{a_{i}}\over{k}}\right\rfloor  \leq 
\sum_{j=1}^{n} \left\lfloor{{b_{j}}\over{k}}\right\rfloor.
\end{equation}
\noindent
Then the ratio of
$\begin{displaystyle}\prod_{j=1}^{n} b_{j}! \end{displaystyle}$ to
$\begin{displaystyle}\prod_{i=1}^{m} a_{j}! \end{displaystyle}$ is
an integer.
\end{Thm}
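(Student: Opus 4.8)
The plan is to establish integrality one prime at a time: a rational number is an integer exactly when its $p$-adic valuation $\nu_p$ is nonnegative for every prime $p$, so it suffices to show
\[
\nu_p\!\left(\prod_{j=1}^{n} b_j!\right) \;\geq\; \nu_p\!\left(\prod_{i=1}^{m} a_i!\right)
\]
for each prime $p$. The single ingredient I would invoke is Legendre's formula for the valuation of a factorial,
\[
\nu_p(N!) \;=\; \sum_{\ell \geq 1} \left\lfloor \frac{N}{p^{\ell}} \right\rfloor ,
\]
which is a finite sum because $\lfloor N/p^{\ell} \rfloor = 0$ as soon as $p^{\ell} > N$.

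Granting this, I would compute
\[
\nu_p\!\left(\prod_{j=1}^{n} b_j!\right) - \nu_p\!\left(\prod_{i=1}^{m} a_i!\right) \;=\; \sum_{\ell \geq 1}\left( \sum_{j=1}^{n} \left\lfloor \frac{b_j}{p^{\ell}} \right\rfloor - \sum_{i=1}^{m} \left\lfloor \frac{a_i}{p^{\ell}} \right\rfloor \right),
\]
and then apply the hypothesis with $k = p^{\ell}$: for every fixed $\ell \geq 1$ the integer $p^{\ell}$ satisfies $p^{\ell} \geq 2$, so the inner bracket is $\geq 0$. Summing over $\ell$ yields the displayed inequality, and since $p$ was an arbitrary prime the ratio has nonnegative valuation everywhere, hence is an integer.

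I do not anticipate a genuine obstacle: the entire point is the observation that controlling factorial valuations requires the floor-sum inequality only at the prime powers $k = p^{\ell}$, and the hypothesis --- posited for every integer $k \geq 2$ --- delivers exactly that. The one thing worth flagging is that the reverse implication (integrality of the ratio forcing all of the floor inequalities) is the more delicate half of the Cartwright--Kupka circle of ideas, but it is not needed for the direction stated in Theorem~\ref{cart-kup}.
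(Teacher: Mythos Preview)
Your argument is correct and is in fact the standard proof: Legendre's formula reduces the valuation comparison to the floor-sum inequality at each prime power $k=p^{\ell}$, and the hypothesis supplies exactly that. There is nothing to compare against here, however, because the paper does not prove Theorem~\ref{cart-kup}; it merely quotes the statement from Cartwright and Kupka \cite{cart-kupka} and then applies it. So your proposal fills in a proof that the paper deliberately omits, and the route you take is the natural one (and essentially the one in the cited reference).
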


The authors \cite{cart-kupka} use this result to prove
that $T(n)$ is an integer.  \\

Given an interesting sequence of integers, it is a natural question to 
explore the structure of their factorization into primes. This is measured
by the $p$-adic valuation of the elements of the sequence.

\begin{Definition}
\label{def-valp}
Given a prime $p$ and a positive
integer $x \neq 0$, write $x = p^{m}y$, with $y$ not
divisible by $p$. The exponent $m$ is the $p$-adic valuation of $x$,
denoted by $m = \nu_{p}(x)$. This definition is extended to $x = a/b \in
\mathbb{Q}$ via $\nu_{p}(x) = \nu_{p}(a) - \nu_{p}(b)$. We leave the value
$\nu_{p}(0)$ as undefined.
\end{Definition}

The reader will find
in \cite{amm1} an analysis of the sequence 
\begin{equation}
A_{l,m} = \frac{l! m!}{2^{m-l}} \sum_{k=l}^{m} 2^{k} \binom{2m-2k}{m-k} \binom{m+k}{m} \binom{k}{l}
\end{equation}
\noindent
for fixed $l \in \mathbb{N}$. The sequence of rational numbers
\begin{equation}
d_{l,m} =  \frac{A_{l,m}}{l! m! 2^{m+l}}
\end{equation}
\noindent
appeared in \cite{bomohyper} in relation to the evaluation 
\begin{equation}
\int_{0}^{\infty} \frac{dx}{(x^{4}+2ax^{2} + 1)^{m+1}} = 
\frac{\pi}{\sqrt{2} m! (4(2a+1))^{m+1/2}} 
\sum_{l=0}^{m} A_{l,m} \frac{a^{l}}{l!}. 
\end{equation}
\noindent
This is a remarkable sequence of integers and some of its properties 
are described in \cite{manna-moll-survey}. In 
\cite{amm2} the reader will 
find similar studies for the Stirling numbers of the 
second kind. 

In this paper we discuss  the $p$-adic valuation of the sequence $T(n)$. 
The data seems erratic, as seen in the case of the first few primes 
\begin{eqnarray}
\nu_{2}(T(n)) & = & \{ 0, \, 1, \, 0, \, 1, \, 0, \, 2, \, 2, \, 3, \, 2, \, 
2, \, 0, \, 2, \, 2, \, 4, \, 4, \, 5, \, 4, \, 4, \, 2, \, 2, \, \cdots \}
\nonumber \\
\nu_{3}(T(n)) & = & \{ 0, \, 0, \, 0, \, 1, \, 1, \, 0, \, 0, \, 0, \, 0, \, 
1, \, 2, \, 3, \, 5, \, 5, \, 3, \, 2, \, 1, \, 0, \, 0, \, 0, \, \cdots \}.
\nonumber \\
\nu_{5}(T(n)) & = & \{ 0, \, 0, \, 0, \, 0, \, 0, \, 0, \, 0, \, 0, \, 1, \, 
2, \, 3, \, 4, \, 4, \, 3, \, 2, \, 1, \, 0, \, 0, \, 0, \, 0, \, \cdots \}.
\nonumber 
\end{eqnarray}

The goal of this paper is to provide a complete description of the 
function $\nu_{p}(T(n))$ for the primes $p=2$ 
and $p=3$. The case $p \geq 5$ presents similar features and the techniques
described here might be used to explain the graphs shown in Figure 
\ref{figure-5prime} and \ref{figure-7prime}. 
A detailed study of the graph of $\nu_{2} \circ T$ yields a new 
proof of a result of D. Frey and 
J. Sellers: the number $T(n)$ is odd if and only if $n$ is a 
{\em Jacobstahl number} $J_{m}$. These numbers are defined by the recurrence 
$J_{n} = J_{n-1} + 2 J_{n-2}$ with initial conditions $J_{1} = 1$ and 
$J_{2} = 3$. The proof presented here is based on the fact that the  
graph of $\nu_{2}(T(j))$ is formed by
{\em blocks} over the 
intervals $\{ \, [J_{n}, \, J_{n+1} ]: \, n \in \mathbb{N} \}$. Moreover, 
the part over $[J_{n+1}, \, J_{n} ]$ contains, at the center, a vertical shift
of the graph over $[J_{n-1}, \, J_{n}]$. This proves that the valuation 
$\nu \circ T$ can only vanish at the endpoints $J_{n}$.

Introduce a generalization of $T(n)$ as 
\begin{equation}
T_{p}(n) := \prod_{j=0}^{n-1} \frac{(pj+1)!}{(n+j)!}.
\end{equation}
\noindent
We will establish that, for each $p$, the numbers $T_{p}(n)$ are integers
and examine some of their divisibility properties. A combinatorial
interpretation of $T_{p}(n)$ is left as an open question. 

\section{A recurrence} \label{sec-rec}
\setcounter{equation}{0}

The integers $T(n)$ grow rapidly and a direct calculation using (\ref{T-seq})
is impractical. The number of digits of $T(10^{k})$ is 
$12, \, 1136, \, 113622$ and $11362189$ for 
$1 \leq k \leq 4$. Naturally, the prime factorization of $T(n)$ is more
promising, since every prime $p$ dividing $T(n)$ satisfies $p \leq 3n-2$. \\

In this section we discuss a recurrence for the $p$-adic valuation of 
$T(n)$, that permits a fast computation of this function. The statement
involves the function 
\begin{equation}
f_{p}(j) := \nu_{p}(j!).
\end{equation}

\begin{Thm}
\label{thm-recurr1}
Let $p$ be a prime. Then the $p$-adic valuation of $T(n)$ satisfies 
\begin{equation}
\nu_{p}(T(n+1)) = \nu_{p}(T(n)) + f_{p}(3n+1) + f_{p}(n) -f_{p}(2n) 
-f_{p}(2n+1).
\label{rec-1}
\end{equation}
\end{Thm}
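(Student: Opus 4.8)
The plan is to start from the product formula $T(n) = \prod_{j=0}^{n-1} (3j+1)!/(n+j)!$ and form the ratio $T(n+1)/T(n)$ directly, tracking how both the numerator product and the denominator product change when $n$ is incremented. In the numerator $\prod_{j=0}^{n-1}(3j+1)!$, passing from $n$ to $n+1$ simply appends the single new factor $(3n+1)!$, since the range of $j$ extends from $\{0,\dots,n-1\}$ to $\{0,\dots,n\}$. The denominator is the subtler piece: going from $\prod_{j=0}^{n-1}(n+j)!$ to $\prod_{j=0}^{n}(n+1+j)!$ both shifts the index inside the factorial and lengthens the range, so I would rewrite each product as a product over the actual integer arguments, i.e. $\prod_{j=0}^{n-1}(n+j)! = \prod_{k=n}^{2n-1} k!$ and $\prod_{j=0}^{n}(n+1+j)! = \prod_{k=n+1}^{2n+1} k!$, and then cancel the overlap $\prod_{k=n+1}^{2n-1} k!$.

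Carrying out that cancellation, the denominator ratio contributes a factor of $(2n)!\,(2n+1)!$ in the numerator of $T(n+1)/T(n)$ and a factor of $n!$ in its denominator (the leftover from the lower end of the old range). Combining with the new numerator term $(3n+1)!$, I expect to land on
\begin{equation}
\frac{T(n+1)}{T(n)} = \frac{(3n+1)!\,\,n!}{(2n)!\,(2n+1)!}.
\label{eq-ratio-plan}
\end{equation}
Then the theorem follows immediately by applying $\nu_p$ to both sides, using that $\nu_p$ is a homomorphism from $(\mathbb{Q}^\times,\cdot)$ to $(\mathbb{Z},+)$ (Definition \ref{def-valp}) together with the abbreviation $f_p(j)=\nu_p(j!)$: $\nu_p(T(n+1)) - \nu_p(T(n)) = f_p(3n+1) + f_p(n) - f_p(2n) - f_p(2n+1)$, which is exactly \eqref{rec-1}.

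The only real bookkeeping hazard — and the step I would double-check most carefully — is getting the telescoping of the denominator exactly right: making sure the index shift $j \mapsto j+1$ inside the factorial and the simultaneous extension of the summation range are handled simultaneously, so that precisely $(2n)!\,(2n+1)!$ survives on top and precisely $n!$ survives on the bottom, with no off-by-one error at either end of the range. Writing both denominators in the normalized form $\prod_{k=a}^{b} k!$ before cancelling makes this transparent and essentially mechanical. A minor secondary point is that $\nu_p$ is only defined on nonzero rationals; since every $T(n)$ is a positive integer (a fact we will establish for $T_p(n)$ in general, and which is classical here), the quantities $\nu_p(T(n))$ and $\nu_p(T(n+1))$ are well defined and the manipulation \eqref{eq-ratio-plan} is legitimate. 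No induction is needed: the recurrence is just the $\nu_p$-image of the one-step ratio identity.
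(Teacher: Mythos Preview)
Your argument is correct and is essentially the same as the paper's: both compute the difference $\nu_p(T(n+1))-\nu_p(T(n))$ by comparing the two products, and your telescoping of the denominator $\prod_{k=n}^{2n-1}k!$ against $\prod_{k=n+1}^{2n+1}k!$ is exactly the bookkeeping the paper glosses over. The only cosmetic difference is that you form the ratio $T(n+1)/T(n)$ first and then apply $\nu_p$, whereas the paper applies $\nu_p$ first and compares the resulting sums of $f_p$-values; the initial condition $T(1)=1$ the paper mentions is, as you note, not needed for the recurrence itself.
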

\begin{proof}
This follows directly from comparing the expression 
\begin{equation}
\nu_{p}(T(n)) = \sum_{j=0}^{n-1} f_{p}(3j+1) - \sum_{j=0}^{n-1} f_{p}(n+j)
\end{equation}
\noindent
with the corresponding one for $\nu_{p}(T(n+1))$ and the initial value 
$T(1) = 1$. 
\end{proof}

Legendre \cite{legendre1} established the formula
\begin{equation}
f_{p}(j) = \nu_{p}(j!) = \frac{j - S_{p}(j)}{p-1},
\end{equation}
\noindent
where $S_{p}(j)$ denotes the sum of the base-$p$ digits of $j$. The result of 
Theorem \ref{thm-recurr1} is now expressed in terms of the function $S_{p}$.

\begin{Cor}
\label{p-val}
The $p$-adic valuation of $T(n)$ is given by 
\begin{equation}
\nu_{p}(T(n)) = \frac{1}{p-1} \left( \sum_{j=0}^{n-1} S_{p}(n+j) - 
\sum_{j=0}^{n-1} S_{p}(3j+1) \right). 
\end{equation}
\end{Cor}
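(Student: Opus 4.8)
The plan is to derive this corollary directly from Theorem~\ref{thm-recurr1} together with Legendre's formula, rather than from the closed form in Corollary's statement itself; in fact the cleanest route is to bypass the recurrence entirely and substitute Legendre's identity into the explicit sum for $\nu_p(T(n))$ that already appeared in the proof of Theorem~\ref{thm-recurr1}. First I would recall that
\begin{equation}
\nu_{p}(T(n)) = \sum_{j=0}^{n-1} f_{p}(3j+1) - \sum_{j=0}^{n-1} f_{p}(n+j),
\end{equation}
which is immediate from \eqref{T-seq} and the additivity of $\nu_p$ over products and quotients. Then I would apply Legendre's formula $f_p(j) = \nu_p(j!) = (j - S_p(j))/(p-1)$ term by term inside both sums.

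The key step is then a bookkeeping cancellation: writing everything over the common denominator $p-1$, the contribution of the ``linear part'' is
\begin{equation}
\frac{1}{p-1}\left( \sum_{j=0}^{n-1} (3j+1) - \sum_{j=0}^{n-1} (n+j) \right),
\end{equation}
and I would check that this vanishes. Indeed $\sum_{j=0}^{n-1}(3j+1) = 3\binom{n}{2} + n = \tfrac{3n(n-1)}{2} + n$ while $\sum_{j=0}^{n-1}(n+j) = n^2 + \binom{n}{2} = n^2 + \tfrac{n(n-1)}{2}$, and these are equal. Once the linear parts cancel, only the digit-sum parts survive, and since each enters with a factor $-1/(p-1)$ the two sums swap roles, giving exactly
\begin{equation}
\nu_{p}(T(n)) = \frac{1}{p-1}\left( \sum_{j=0}^{n-1} S_{p}(n+j) - \sum_{j=0}^{n-1} S_{p}(3j+1) \right),
\end{equation}
as claimed.

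There is really no serious obstacle here; the only thing that requires a moment's care is the arithmetic identity $\sum_{j=0}^{n-1}(3j+1) = \sum_{j=0}^{n-1}(n+j)$, which is what makes the result clean — it is the reflection of the fact that $T(n)$ is built from a genuinely balanced ratio of factorials (the same balancing that Theorem~\ref{cart-kup} exploits to prove integrality). One could alternatively present this as an induction on $n$ using the recurrence \eqref{rec-1}: substitute Legendre's formula into $f_p(3n+1) + f_p(n) - f_p(2n) - f_p(2n+1)$, observe the linear terms $(3n+1) + n - 2n - (2n+1) = 0$ cancel, and match the telescoping of the digit-sum terms against the difference of the claimed closed forms at $n+1$ and $n$; but the direct substitution is shorter and I would present that.
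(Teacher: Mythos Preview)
Your proof is correct and is exactly the approach the paper intends: substitute Legendre's formula into the explicit expression $\nu_{p}(T(n)) = \sum_{j=0}^{n-1} f_{p}(3j+1) - \sum_{j=0}^{n-1} f_{p}(n+j)$ from the proof of Theorem~\ref{thm-recurr1} and observe that the linear parts cancel. The paper leaves this as a one-line remark, so you have simply written out the details it omits.
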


Summing the recurrence (\ref{rec-1}) and using $T(1) = 1$ we obtain 
an alternative expression for the $p$-adic valuation of $T(n)$.

\begin{Prop}
\label{prop-valuetn}
The $p$-adic valuation of $T(n)$ is given by 
\begin{equation}
\nu_{p}(T(n)) = \frac{1}{p-1} \sum_{j=1}^{n-1} 
\left( S_{p}(2j) + S_{p}(2j+1) - S_{p}(3j+1) - S_{p}(j) \right). 
\label{valp-form1}
\end{equation}
\noindent
In particular, for $p=2$ we have 
\begin{eqnarray}
\nu_{2}(T(n)) & = & \sum_{j=0}^{n-1} 
\left( S_{2}(2j+1) - S_{2}(3j+1)  \right) \label{val-2} \\
& = & \sum_{j=1}^{n} 
\left( S_{2}(2j-1) - S_{2}(3j-2)  \right).
\nonumber
\end{eqnarray}
\end{Prop}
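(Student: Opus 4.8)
The plan is to integrate (telescope) the recurrence of Theorem~\ref{thm-recurr1} and then convert the factorial valuations into base-$p$ digit sums. Summing (\ref{rec-1}) over $j=1,\dots,n-1$ and using the initial value $T(1)=1$, so that $\nu_p(T(1))=0$, yields
\[
\nu_p(T(n)) \;=\; \sum_{j=1}^{n-1}\bigl( f_p(3j+1) + f_p(j) - f_p(2j) - f_p(2j+1) \bigr).
\]
I would then substitute Legendre's formula $f_p(m) = (m - S_p(m))/(p-1)$ into each term. The crucial observation is that the linear parts cancel \emph{termwise}: for every $j$ we have $(3j+1) + j - 2j - (2j+1) = 0$, so the division by $p-1$ causes no trouble and only the digit-sum parts survive, giving exactly
\[
\nu_p(T(n)) \;=\; \frac{1}{p-1}\sum_{j=1}^{n-1}\bigl( S_p(2j) + S_p(2j+1) - S_p(3j+1) - S_p(j) \bigr),
\]
which is (\ref{valp-form1}).

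For the case $p=2$ I would specialize this identity using the elementary behaviour of binary digit sums under appending a bit: $S_2(2j) = S_2(j)$ and $S_2(2j+1) = S_2(j)+1$. In fact only the first of these is needed, since it makes the summand collapse, $S_2(2j) + S_2(2j+1) - S_2(3j+1) - S_2(j) = S_2(2j+1) - S_2(3j+1)$, so that
\[
\nu_2(T(n)) \;=\; \sum_{j=1}^{n-1}\bigl( S_2(2j+1) - S_2(3j+1) \bigr).
\]
It then remains to tidy up the summation range: the $j=0$ term equals $S_2(1) - S_2(1) = 0$, so it can be adjoined for free, producing the first displayed form in (\ref{val-2}) with the sum over $0 \le j \le n-1$; re-indexing $j \mapsto j-1$ replaces $2j+1$ by $2j-1$ and $3j+1$ by $3j-2$ and turns the range into $1 \le j \le n$, giving the second form.

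I do not expect any genuine obstacle in this argument; it is essentially bookkeeping. The only points that require attention are verifying that the linear terms in the Legendre substitution cancel for each individual $j$ (rather than merely summing to zero), which is what legitimizes pulling out the factor $1/(p-1)$, and carefully tracking the endpoints of the sum when adjoining the $j=0$ term and shifting the index in the $p=2$ case.
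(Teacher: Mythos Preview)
Your argument is correct and follows exactly the route indicated by the paper, which simply says to sum the recurrence~(\ref{rec-1}) from the initial value $T(1)=1$ and apply Legendre's formula. You have merely supplied the details (the termwise cancellation of the linear parts, the simplification $S_2(2j)=S_2(j)$ for $p=2$, and the index shift) that the paper leaves implicit.
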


\begin{Cor}
For each $n \in \mathbb{N}$ we have
\begin{equation}
\sum_{j=1}^{n-1} S_{2}(2j+1) \geq \sum_{j=1}^{n-1} S_{2}(3j+1). 
\end{equation}
\end{Cor}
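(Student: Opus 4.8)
The plan is to deduce this inequality immediately from the closed form for $\nu_2(T(n))$ recorded in Proposition~\ref{prop-valuetn}, together with the elementary fact that $T(n)$ is a positive integer. Since $T(n)$ counts $n \times n$ alternating sign matrices (or, if one prefers, by Theorem~\ref{cart-kup}), it is a positive integer, and hence its $2$-adic valuation is nonnegative: $\nu_2(T(n)) \geq 0$.

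Concretely, I would first invoke formula (\ref{val-2}), namely
\[
\nu_2(T(n)) = \sum_{j=0}^{n-1}\bigl(S_2(2j+1) - S_2(3j+1)\bigr).
\]
Next I would note that the $j=0$ summand equals $S_2(1) - S_2(1) = 0$, so the range of summation may be shifted to $j = 1, \dots, n-1$ without affecting the value. Combining this with $\nu_2(T(n)) \geq 0$ gives
\[
\sum_{j=1}^{n-1} S_2(2j+1) \;\geq\; \sum_{j=1}^{n-1} S_2(3j+1),
\]
which is exactly the claim.

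There is essentially no real obstacle here: the entire content of the corollary is carried by the nonnegativity of $\nu_2(T(n))$, which is free once $T(n)$ is known to be an integer, and the rest is the (trivial) bookkeeping of checking that discarding the $j=0$ term is harmless. Should one wish to avoid even the appeal to integrality of $T(n)$, an alternative is to prove the inequality directly by induction on $n$ using the recurrence (\ref{rec-1}) specialized to $p=2$; but that is a strictly longer path to the same conclusion, and the one-line argument above is the natural proof.
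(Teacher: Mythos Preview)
Your argument is correct and is exactly the intended proof: the paper places this statement as an immediate corollary of Proposition~\ref{prop-valuetn}, the point being that $T(n)\in\mathbb{N}$ forces $\nu_2(T(n))\ge 0$, and dropping the vanishing $j=0$ term in (\ref{val-2}) gives the inequality.
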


\noindent
{\bf Note}. The formula (\ref{valp-form1}) can be used to compute $T(n)$ for 
large values of $n$. Recall that only primes $p \leq 3n-2$ 
appear in the factorization of $T(n)$. For example, the number $T(100)$ has
$1136$ digits and its prime factorization is given by
\begin{multline}
T(100) = 2^{23} \cdot 3^{19} \cdot 13^{13} \cdot 17^{4} \cdot 29^{3} \cdot 41^{4} \cdot 61^{2} \cdot 67^{11} \cdot 71^{5}\cdot 
73^{3} \cdot 151 \cdot 157^{5} \cdot 163^{9} \cdot 167^{11} \\
\times 173^{15} 
\cdot 179^{19} \cdot 181^{21} \cdot 191^{27} \cdot 193^{29} \cdot 197^{31}
\cdot 199^{33} \cdot 211^{30} \cdot 223^{26} \cdot 227^{24} \cdot 229^{24}
\cdot 233^{22} \\
 \times 239^{20} \cdot 241^{40} \cdot 251^{16} \cdot 257^{14} 
\cdot 263^{12} \cdot 269^{10} \cdot 271^{10} \cdot 277^{8} \cdot 281^{6}
\cdot 283^{6} \cdot 293^{2}. \nonumber
\end{multline}

The recurrence (\ref{rec-1}) could be employed to generate large amount 
of data related to number theoretical questions 
associated to $T(n)$. In this paper we address the simplest of all: 
{\em characterize those indices} $n$ {\em for which } $T(n)$ {\em is odd}. \\

\section{When is $T(n)$ odd?} \label{sec-odd}
\setcounter{equation}{0}

Figure \ref{figure-1} shows the $2$-adic valuation of the sequence $T(n)$ for
$1 \leq n \leq 10^{5}$. Observe that $\nu_{2}(T(n)) \geq 0$ in view of the
fact that $T(n) \in \mathbb{N}$. Moreover, we see that $\nu_{2}(T(n)) = 0$
for a sequence of values starting  with
\begin{equation}
1, \, 3, \, 5, \, 11, \, 21, \, 43, \, 85, \, 171, \, 341, \, 683.
\end{equation}

\medskip

{{
\begin{figure}[ht]
\begin{center}
\centerline{\epsfig{file=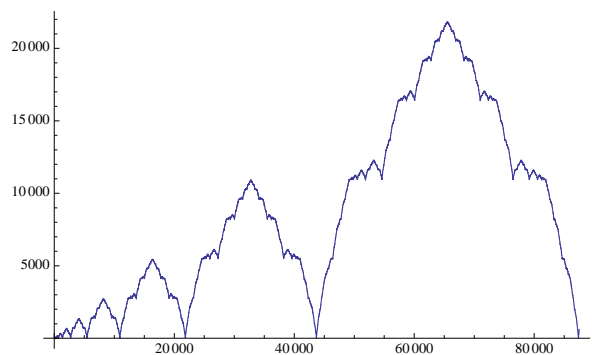,width=20em,angle=0}}
\caption{The $2$-adic valuation of $T(n)$}
\label{figure-1}
\end{center}
\end{figure}
}}

\medskip

A search in {\em The On-Line Encyclopedia of Integer Sequences} identifies 
these numbers as terms in the  {\em Jacobsthal sequence} (A001045),
 defined by the recurrence 
\begin{equation}
J_{n} = J_{n-1} + 2J_{n-2}, \text{ with } J_{0} = 1, J_{1} = 1.
\end{equation}
\noindent
The empirical observation  is that the sequence $T(n)$ is odd if and only if
$n$ is a Jacobsthal number; i.e., $n = J_{m}$ for some $m$.  \\

\noindent
{\bf Note}. The Jacobsthal numbers have many interpretations. 
Here is a small sample: \\

\noindent
a) $J_{n}$ is the numerator of the reduced fraction in the alternating 
sum 
$$\sum_{j=1}^{n+1} \frac{(-1)^{j+1}}{2^{j}}. $$ 

\noindent
b) Number of permutations with no fixed points avoiding $231$ and $132$. \\

\noindent
c) The number of odd coefficients in the expansion of 
$(1+x+x^{2})^{2^{n-1}-1}$.  \\

Many other examples can be found at

\begin{center}
\tt{http://www.research.att.com/\~\,njas/sequences/A001045}
\end{center}

\medskip

In this section we present a new proof of the following result
\cite{frey-sellers1}.  \\

\begin{Thm}
\label{thm-odd-0}
The number $T(n)$ is odd if and only if $n$ is a Jacobstahl number. 
\end{Thm}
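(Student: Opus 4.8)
The plan is to analyze the graph of the function $n \mapsto \nu_2(T(n))$ through its self-similar \emph{block structure} over the Jacobsthal intervals, and then read the theorem off from two facts: $\nu_2 \circ T$ vanishes at every $J_m$, and it is strictly positive at every integer lying strictly between two consecutive Jacobsthal numbers. The starting point is the one-step recurrence obtained from (\ref{val-2}); since $S_2(2n+1) = S_2(n)+1$,
\begin{equation}
\nu_2(T(n+1)) - \nu_2(T(n)) \;=\; S_2(2n+1) - S_2(3n+1) \;=\; 1 + S_2(n) - S_2(3n+1).
\label{plan-delta}
\end{equation}
Everything therefore comes down to understanding the carry pattern of multiplication by $3$ in base $2$.

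First I would record the arithmetic of the Jacobsthal numbers that tames this pattern. From $J_m = (2^{m+1}+(-1)^m)/3$ one obtains $J_m + J_{m-1} = 2^m$, $\ J_m - J_{m-2} = 2^{m-1}$, and $3J_m + 1 \in \{\,2^{m+1},\ 2^{m+1}+2\,\}$ according to the parity of $m$; in base $2$ the number $J_{2k+1}$ is the string $(10)^k1$ and $J_{2k}$ is $(10)^{k-1}11$. The point is that, as $n$ ranges over a single block $[J_m, J_{m+1}]$, the integer $3n+1$ ranges over the dyadic window between (essentially) $2^m$ and $2^{m+1}$ and its binary expansion ``rolls over'' only at the two endpoints $n = J_m$ and $n = J_{m+1}$ --- this is exactly why the blocks are the natural units.

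The heart of the proof is a strong induction on $m$ establishing the following block package: (i) $\nu_2(T(J_m)) = 0$; (ii) on $[J_m, J_{m+1}]$ the graph of $\nu_2 \circ T$ is symmetric about its midpoint $n = 2^m$; (iii) on the central subinterval $[\,J_{m-1}+2^{m-1},\,J_m+2^{m-1}\,]$ the graph coincides with the graph over the previous block $[J_{m-1}, J_m]$ shifted upward by $c_m := 2J_{m-3} > 0$; and (iv) on the left flank $[\,J_m,\ J_{m-1}+2^{m-1}\,]$ one has $\nu_2(T(J_m + t)) = \nu_2(T(J_{m-2}+t)) + t$ for $0 \le t \le 2J_{m-3}$, so this piece is $\ge 1$ except at its left endpoint $J_m$ (by (ii) the right flank mirrors it). Granting the package, (iii) makes the central piece $\ge c_m > 0$, since the inner graph is a nonnegative valuation, and combined with (iv) and its mirror this gives $\nu_2(T(n)) > 0$ for $J_m < n < J_{m+1}$; moreover (ii) forces $\nu_2(T(J_{m+1})) = \nu_2(T(J_m)) = 0$, which carries the induction forward. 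Since every integer $\ge 1$ is either a Jacobsthal number or lies strictly inside some block $[J_m, J_{m+1}]$, this proves that $\nu_2(T(n)) = 0$ if and only if $n$ is a Jacobsthal number, which is the theorem. Each of (iii), (iv) reduces, through (\ref{plan-delta}), to evaluating the two differences $S_2(n'+2^{m-1}) - S_2(n')$ and $S_2(3n'+3\cdot 2^{m-1}+1) - S_2(3n'+1)$ for $n'$ in the relevant range (here $J_m - J_{m-2} = 2^{m-1}$ is used, so that the shift applied to $3n+1$ is the fixed number $3\cdot 2^{m-1} = 2^m + 2^{m-1}$), and (ii) reduces to the assertion that, for $a + b = 2^{m+1}-1$ with $a$ in the upper half of the block, the base-$2$ addition of $3a+1$ and $3b+1$ produces exactly one carry. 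All of these are checked by inspecting binary digits, using the explicit shapes above.

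The step I expect to be the main obstacle is precisely this carry bookkeeping in (ii)--(iv): one must track, uniformly over an entire block, which bits of $3n+1$ equal $0$ and which equal $1$, and determine exactly when adding $2^{m-1}$ (respectively $3\cdot 2^{m-1}$) triggers a carry chain; the Jacobsthal endpoints, where $3n+1$ lands on a near-power of $2$, are the delicate cases. A secondary complication is a genuine period-$2$ effect --- the recursion ties the block $[J_m, J_{m+1}]$ to the block $[J_{m-1}, J_m]$, and since $J_{2k}$ and $J_{2k+1}$ have slightly different binary shapes the even and odd cases must be run in parallel. Finally, because the formulas for $c_m$ and for the flank length $2J_{m-3}$ degenerate for small $m$, the first few blocks (say through $m = 3$) should be verified by direct computation to anchor the induction.
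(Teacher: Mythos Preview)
Your proposal is correct and takes essentially the same block-structure approach as the paper: your items (i)--(iv) correspond to the paper's Steps~2, 5, 4, 3 respectively (note $J_{m-1}+2^{m-1}=2^m-J_{m-2}$ and $J_m+2^{m-1}=2^m+J_{m-2}$, so your central interval is exactly the paper's), and the carry bookkeeping you anticipate is precisely what the paper does via its auxiliary lemma on $S_2(3\cdot 2^{n}\pm(3j-2))$. The only organizational difference is that you propagate $\nu_2(T(J_{m+1}))=0$ from $\nu_2(T(J_m))=0$ using the symmetry (ii), whereas the paper proves $\nu_2(T(J_n))=0$ separately by first computing the midpoint value $\nu_2(T(2^{n-1}))=J_{n-2}$; your shortcut is valid and slightly cleaner, since the paper's symmetry proof is self-contained and does not depend on that midpoint value.
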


The proof will employ several elementary
properties of the Jacobsthal number $J_{n}$, summarized here for the
convenience of the reader.
\begin{equation}
J_{n} = J_{n-1} + 2 J_{n-2}, \text{ with } J_{0}=1, \, J_{1} = 1. 
\end{equation}

\begin{Lem}
\label{jacob-prop}
For $n \geq 2$, the Jacobstahl numbers $J_{n}$ satisfy \\

\noindent
a) $ J_{n}= J_{n-1} + 2 J_{n-2}$ with $J_{0}=1$ and $J_{1}=1$. (This is 
the definition of $J_{n}$). \\ 

\noindent
b) $J_{n} = \frac{1}{3} ( 2^{n+1} + (-1)^{n} )$. \\

\noindent
c) $2^{n-1} + 1 \leq J_{n} < 2^{n}$. \\

\noindent
d) $J_{n}+J_{n-1} = 2^{n}$. \\

\noindent
e) $J_{n}-J_{n-2} = 2^{n-1}$.
\end{Lem}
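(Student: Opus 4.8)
The plan is to derive all of (b)--(e) from a single closed form for $J_n$, obtained by the standard theory of linear recurrences. The characteristic equation of $J_n = J_{n-1} + 2J_{n-2}$ is $x^2 - x - 2 = 0$, with roots $x = 2$ and $x = -1$, so $J_n = A\cdot 2^n + B(-1)^n$ for constants fixed by the initial data. Imposing $J_0 = 1$ and $J_1 = 1$ gives $A + B = 1$ and $2A - B = 1$, hence $A = 2/3$, $B = 1/3$, and therefore $J_n = \tfrac{1}{3}\big(2^{n+1} + (-1)^n\big)$, which is part (b). (Equivalently, one verifies that this formula satisfies the recurrence and the two initial conditions and invokes induction.) Part (a) is just the defining recurrence, recorded for convenience, so nothing is required there.

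For part (d) I would substitute the closed form directly: $J_n + J_{n-1} = \tfrac{1}{3}\big(2^{n+1} + 2^n + (-1)^n + (-1)^{n-1}\big)$, and since $(-1)^n + (-1)^{n-1} = 0$ and $2^{n+1} + 2^n = 3\cdot 2^n$, the right-hand side collapses to $2^n$. Part (e) then needs no further computation, using the recurrence and then (d):
\begin{equation}
J_n - J_{n-2} = (J_{n-1} + 2J_{n-2}) - J_{n-2} = J_{n-1} + J_{n-2} = 2^{n-1}.
\end{equation}

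For the two-sided estimate (c) I would again insert the closed form. The upper bound $J_n < 2^n$ is equivalent to $2^{n+1} + (-1)^n < 3\cdot 2^n$, i.e.\ $(-1)^n < 2^n$, which holds for all $n \geq 1$. For the lower bound, $J_n \geq 2^{n-1} + 1$ rearranges to $2^{n+1} - 3\cdot 2^{n-1} \geq 3 - (-1)^n$; the left side equals $2^{n-1}$ and the right side is at most $4$, so it suffices to note $2^{n-1} \geq 3 - (-1)^n$, which one checks at $n = 2$ and $n = 3$ (equality in both cases) and which is obvious for $n \geq 4$, where $2^{n-1} \geq 8$.

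There is no real obstacle here; the only point requiring attention is the boundary behaviour of the inequality in (c), where the lower bound is sharp for small $n$ (an equality at $n = 2$ and $n = 3$), so that verification must treat those cases explicitly rather than lean on a crude asymptotic estimate.
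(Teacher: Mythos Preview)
Your argument is correct. The paper itself does not supply a proof of this lemma at all: it introduces the list (a)--(e) as ``elementary properties of the Jacobsthal number $J_n$, summarized here for the convenience of the reader,'' and then moves directly to the outline of Theorem~3.1 without any derivation. So there is nothing to compare your approach against; you have filled in what the authors left as routine. Your method --- solving the characteristic equation to get the closed form (b), then reading off (d), (e), and (c) from it --- is the standard one, and your handling of the boundary cases $n=2,3$ in the lower bound of (c) is appropriate, since equality does hold there.
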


\medskip

\noindent
{\bf Outline of the proof of Theorem \ref{thm-odd-0}}. The argument
is based on some observations from the
graph of the function $\nu_{2} \circ T$ as seen in Figure \ref{figure-1}.
The proof is divided into a small number
of steps, each one verified by an inductive procedure.  The hypothesis
assumes complete knowledge of the function $\nu_{2}(T(j))$
for $0 \leq j \leq J_{n}$.
We now show how to describe the function $\nu_{2} \circ T$ in the interval
$[J_{n}, \, J_{n+1}].$ \\

\noindent
{\bf Step 1}. The midpoint of the interval is $j=2^{n}$. The value there is
$\nu_{2}(T(2^{n})) = J_{n-1}$. This is Theorem \ref{thm-power}. \\

\noindent
{\bf Step 2}. The value $T(J_{n})$ is odd, that is, $\nu_{2}(T(J_{n})) = 0.$
This is the content of Theorem \ref{thm-odd}. \\

\noindent
{\bf Step 3}. Let $0 \leq i \leq 2J_{n-3}$. Then
\begin{equation}
\nu_{2}( T(J_{n} + i) ) = i + \nu_{2}( T(J_{n-2} + i )).
\end{equation}
\noindent
This is Lemma \ref{lemma-shift}. It describes the
function $\nu_{2} \circ T$ in the interval
$[J_{n}, \, 2^{n} - J_{n-2} ]$.  In particular, $\nu_{2}(T(2^{n}-J_{n-2}))
= 2J_{n-3}$ and $\nu_{2}(T(j)) > 0$ for $J_{n} < j < 2^{n} - J_{n-2}$. \\

\noindent
{\bf Step 4}. Let $0 \leq i \leq 2J_{n-2}$. Then
\begin{equation}
\nu_{2}(T(2^{n} - J_{n-2} + i)) = 
\nu_{2}(T(J_{n-1} + i))  + 2J_{n-3}. 
\end{equation}
\noindent
This is Proposition \ref{prop-shift}. It shows that
the graph of $\nu_{2} \circ T$ on the interval
$[ 2^{n} - J_{n-2}, 2^{n} + J_{n-2} ]$ is a vertical shift, by $2J_{n-3}$, of
the graph over the interval $[ J_{n-1}, J_{n} ]$. \\

\noindent
{\bf Step 5}. This is Proposition \ref{prop-symmetry}. Let
$0 \leq i \leq J_{n-1}$. Then 
$\nu_{2}(T(2^{n}-i)) = \nu_{2}(T(2^{n} + i ))$, explaining the symmetry of the
graph about the point $j = 2^{n}$ on the interval
$[J_{n}, J_{n+1}]$.  \\

This completes the proof of Theorem \ref{thm-odd-0}. \\

\medskip

{{
\begin{figure}[ht]
\begin{center}
\centerline{\epsfig{file=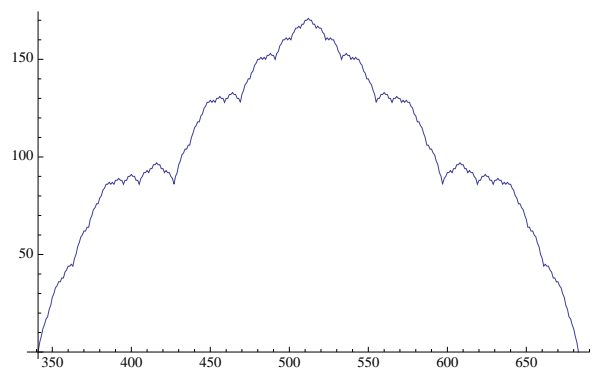,width=20em,angle=0}}
\caption{The $2$-adic valuation of $T(n)$ between minima}
\label{figure-3}
\end{center}
\end{figure}
}}

\medskip

\noindent
{\bf Note}. As we vary $m \in \mathbb{N}$, the graph of
$\nu_{2}(T(n))$ in the interval $[J_{m}, J_{m+1}]$
resemble each other. These are depicted in 
Figure \ref{figure-3} that shows the value of $\nu_{2}(T(n))$ for $J_{10}=341 
\leq n \leq 683 = J_{11}$. This suggests a possible scaling law for the
graph of $\nu_{2} \circ T$. 
Figure \ref{figure-3a} shows the first $15$ such 
graphs, scaled to the unit square.  The 
convergence to a limiting curve is apparent. The properties 
of this curve will be 
explored in the future. \\

{{
\begin{figure}[ht]
\begin{center}
\centerline{\epsfig{file=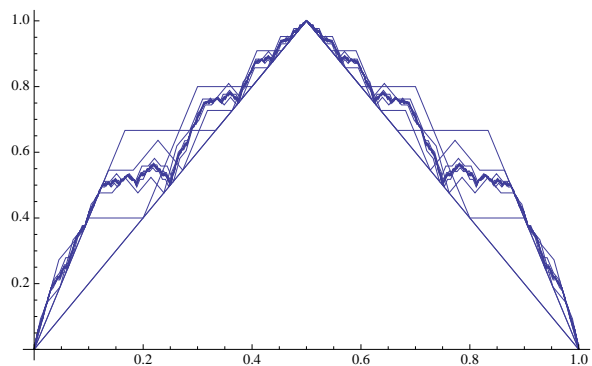,width=20em,angle=0}}
\caption{The scaled version of the $2$-adic valuation of $T(n)$}
\label{figure-3a}
\end{center}
\end{figure}
}}

\medskip

The proof of Theorem \ref{thm-odd-0} begins with an auxiliary lemma. \\

\begin{Lem}
\label{lemma-aux1}
Let $n \in \mathbb{N}$. Introduce the notation 
$S_{n,j}^{+} := S_{2}( 3 \cdot 2^{n} + 3j-2)$ and
$S_{n,j}^{-} := S_{2}( 3 \cdot 2^{n} - 3j+1 )$.
Then 
\begin{equation}
S_{n,j}^{+} = \begin{cases}
S_{2}(3j-2) + 2  & \text{ if } \quad
1 \leq j \leq J_{n-1},  \\
S_{2}(3j-2)  & \text{ if } \quad
1 + J_{n-1} \leq j \leq J_{n},  \\
S_{2}(3j-2) + 1  & \text{ if } \quad
1 + J_{n}  \leq j \leq 2^{n}; 
\end{cases}
\end{equation}
\noindent
and 
\begin{equation}
S_{n,j}^{-} = \begin{cases}
n+1-S_{2}(3j-2)  & \text{ if } \quad
1 \leq j \leq J_{n-1},  \\
n+2-S_{2}(3j-2)  & \text{ if } \quad
1 + J_{n-1} \leq j \leq J_{n},  \\
n+1-S_{2}(3j-2)  & \text{ if } \quad
1 + J_{n}  \leq j \leq 2^{n}.
\end{cases}
\end{equation}
\end{Lem}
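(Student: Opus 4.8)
The plan is to prove the two case-formulas by a direct analysis of base-$2$ digits, exploiting the estimate $2^{n-1}+1\le J_n<2^n$ from Lemma~\ref{jacob-prop}(c) together with the identities $J_n+J_{n-1}=2^n$ and $J_n-J_{n-2}=2^{n-1}$. The key elementary fact I would isolate first is the behaviour of $S_2$ under adding a power of $2$: if $0\le m<2^n$ then $S_2(2^n+m)=1+S_2(m)$, and more generally for a ``carry-free'' sum $S_2(a+b)=S_2(a)+S_2(b)$ precisely when the binary additions produce no carries, while each carry drops the digit sum by $1$. So the whole problem reduces to counting how many carries occur when one forms $3\cdot2^n+(3j-2)$ and, for the minus case, how many borrows occur in $3\cdot2^n-(3j-2)=3\cdot2^n-3j+1$.

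For $S_{n,j}^{+}$: write $3\cdot2^n=2^{n+1}+2^n$. First I would bound the range of the added quantity: for $1\le j\le 2^n$ we have $1\le 3j-2\le 3\cdot2^n-2<2^{n+2}$, so $3j-2$ occupies at most $n+2$ bits and can overlap both the $2^{n+1}$ and the $2^n$ bit of $3\cdot2^n$. The three cases in the statement correspond exactly to which of the top two bits of $3\cdot2^n$ get ``hit'' by a carry coming up from the lower bits of $3j-2$, and the cut-points $J_{n-1}$ and $J_n$ are where $3j-2$ crosses $2^n$ and $2^{n+1}$ respectively: indeed $3J_{n-1}-2=2^{n}+(-1)^{n-1}\cdot\frac{?}{}$... more cleanly, $3J_{n-1}-2 = (2^{n+1}+(-1)^{n})-2$ is just below $2^{n+1}$ when combined with the $2^n$ term, and $3J_n-2$ is just below $2^{n+1}$ outright, using Lemma~\ref{jacob-prop}(b). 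In the first range the low part of $3j-2$ stays below $2^n$, the sum $3\cdot2^n+(3j-2)$ is carry-free across the top two bits, and one simply gains the two extra set bits at positions $n$ and $n+1$: hence $+2$. In the middle range $3j-2$ now has its own bit at position $n$, which collides with the $2^n$ of $3\cdot2^n$, producing one carry into position $n+1$, which in turn collides with the $2^{n+1}$ there, producing a second carry into position $n+2$; two carries cancel the two gained bits, giving $+0$. In the third range $3j-2\ge 2^{n+1}$, so it already carries a bit at position $n+1$; the interaction pattern shifts and exactly one net carry survives, giving $+1$. I would make each of these three sub-claims precise by writing $3j-2 = 2^{n+1}\epsilon_2 + 2^{n}\epsilon_1 + r$ with $0\le r<2^n$ and checking the carry propagation in each of the (finitely many) digit-pattern cases, using the Jacobsthal bounds to pin down $\epsilon_1,\epsilon_2$ on each range.

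For $S_{n,j}^{-}$: here I would use the identity $S_2(2^N-1-x) = N - S_2(x)$ (bit-complementation on $N$ bits) valid for $0\le x<2^N$. Writing $3\cdot2^n-3j+1 = (3\cdot2^n - 1) - (3j-2)$ and noting $3\cdot2^n-1$ is \emph{not} a string of ones, I would instead pass through a genuine power of two: choose $N=n+2$, so $2^{n+2}-1-(3j-2-1)=2^{n+2}-3j+2$, and relate this to $3\cdot2^n-3j+1 = 2^{n+2}-3j+1 -2^n$; the single subtraction of $2^n$ again either hits a set bit (costing nothing, a borrow-free removal) or forces a borrow chain. The net effect is that $S_{n,j}^{-} = (n+2) - S_2(3j-2) - b$ where $b\in\{0,1\}$ counts whether removing the $2^n$ triggers a borrow, and $b$ depends on the same trichotomy in $j$ via the Jacobsthal cut-points; comparing with the claimed $n+1-S_2(3j-2)$ versus $n+2-S_2(3j-2)$ shows $b=1$ on the outer two ranges and $b=0$ on the middle range — which is consistent with (indeed forced by, via the complement bijection) the $+$ computation. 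In fact, once the $S_{n,j}^{+}$ formula is established I would derive $S_{n,j}^{-}$ from it almost for free: adding $S_{n,j}^{+}+S_{n,j}^{-} = S_2(3\cdot2^n+3j-2)+S_2(3\cdot2^n-3j+1)$, and observing that $(3\cdot2^n+3j-2)+(3\cdot2^n-3j+1) = 6\cdot2^n - 1 = 2^{n+1}+2^{n+2}-1$ is fixed, so the two digit sums are linked by a carry count for this fixed sum; a short check that this combined carry count equals $n+3-(\text{the case-dependent }{+}2,{+}0,{+}1)$ yields exactly the stated second formula.

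The main obstacle I anticipate is purely bookkeeping: correctly tracking carry (respectively borrow) propagation across the two top bit-positions $n$ and $n+1$ of $3\cdot2^n$, since a carry into position $n$ can cascade. The Jacobsthal identities (b)--(e) are precisely what make the boundary indices $J_{n-1}$ and $J_n$ the exact thresholds where a new bit of $3j-2$ appears at position $n$ or $n+1$, so the cases are clean once one commits to writing $3j-2$ in the split form $2^{n+1}\epsilon_2+2^n\epsilon_1+r$; the risk is an off-by-one in identifying which range gives $\epsilon_1=1$, and this is where I would lean on $3J_{n-1}-2$ and $3J_n-2$ computed explicitly from Lemma~\ref{jacob-prop}(b).
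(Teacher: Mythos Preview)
Your plan is essentially the paper's own argument: write $3\cdot 2^{n}=2^{n+1}+2^{n}$, place $3j-2$ relative to the thresholds $2^{n}$ and $2^{n+1}$ using $3J_{m}=2^{m+1}+(-1)^{m}$ from Lemma~\ref{jacob-prop}(b), and count the carries in the binary addition; the paper does exactly this (and dismisses the remaining cases, including all of $S_{n,j}^{-}$, as ``similar''). One caution on your proposed shortcut for $S_{n,j}^{-}$ via the sum $S_{n,j}^{+}+S_{n,j}^{-}$: that sum is \emph{not} constant across the three ranges---it equals $n+3$ on $1\le j\le J_{n-1}$ but $n+2$ on the other two---so recovering $S_{n,j}^{-}$ from $S_{n,j}^{+}$ still requires an independent (though short) carry count in each case, and is not quite ``almost for free''; the paper in fact uses the special case $S_{n,j}^{+}+S_{n,j}^{-}=n+3$ only on the first range, later, as identity~(\ref{ident-symm}).
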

\begin{proof}
Let $3j-2 = a_{0} + 2a_{1} + \cdots + a_{r}2^{r}$ be the binary expansion of 
$3j-2$. The corresponding one for $3 \cdot 2^{n-1}$ is simply 
$2^{n-1} + 2^{n}$. For $3j-2 < 2^{n-1}$ these two  expansions have no terms in
common, therefore $S_{n,j}^{+} = S_{2}(3j-2) + 2$. On the other hand, if
$2^{n-1} \leq 3j-2 < 2^{n}$ then the index in the binary expansion of $3j-2$
is $r = n-1$ with $a_{n-1} = 1$. The expansion  of $3j-2 + 3 \cdot 2^{n-1}$ is
now 
\begin{equation}
a_{0} + 2a_{1} + \cdots + a_{n-2}2^{n-2} + 2^{n-1} + 2^{n-1} + 2^{n} = 
a_{0} + 2a_{1} + \cdots + a_{n-2}2^{n-2} + 2^{n+1}, 
\nonumber
\end{equation}
\noindent
and this yields $S_{n,j}^{+} = a_{0} + a_{1} + \cdots + a_{n-2} + 1 = 
S_{2}(3j-2)$. The remaining cases are treated in a similar form. 
\end{proof}

We now establish the $2$-adic valuation at the center of the 
interval $[J_{n-1}, J_{n}]$. This completes Step 1 in the outline.

\begin{Thm}
\label{thm-power}
Let $n \in \mathbb{N}$. Then 
\begin{equation}
\nu_{2} \left( T \left( 2^{n} \right) \right) = J_{n-1}.
\end{equation}
\end{Thm}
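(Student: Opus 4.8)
The plan is to evaluate $\nu_2(T(2^n))$ directly from the closed form in Proposition~\ref{prop-valuetn}, namely
\[
\nu_{2}(T(2^{n})) = \sum_{j=1}^{2^{n}} \left( S_{2}(2j-1) - S_{2}(3j-2) \right),
\]
and to recognize the right-hand side as a telescoping-type quantity that collapses to the Jacobsthal number $J_{n-1}$. First I would note that $S_2(2j-1)$ is easy to control: since $2j-1$ is odd, $S_2(2j-1) = S_2(j-1) + 1$ when $j-1$ is even... more usefully, $S_2(2j-1) = S_2(2(j-1)) + 1$ is false in general, so instead I would use the exact identity $S_2(2j-1) = S_2(j-1) + [\text{carry structure}]$; cleaner: write $S_2(2j-1)=\nu_2$-free and simply compute $\sum_{j=1}^{2^n} S_2(2j-1) = \sum_{k=0}^{2^n-1} S_2(2k+1)$. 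Over a full dyadic block $0 \le k < 2^n$ this sum is exactly $n\cdot 2^{n-1} + 2^{n-1}$ (each of the low $n-1$ bits is $1$ half the time among the $2^{n-1}$ relevant values, plus the forced units bit), a standard digit-counting fact I would record as a preliminary computation.

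The harder half is $\sum_{j=1}^{2^n} S_2(3j-2)$. Here I would proceed by induction on $n$ using the recursion built into Lemma~\ref{lemma-aux1}: the numbers $3j-2$ for $1 \le j \le 2^{n}$ split at $j = 2^{n-1}$, and Lemma~\ref{lemma-aux1} (with $n$ replaced by $n-1$) describes precisely how $S_2(3 \cdot 2^{n-1} + 3i - 2)$ and $S_2(3 \cdot 2^{n-1} - 3i + 1)$ relate to $S_2(3i-2)$ on the three subintervals cut out by $J_{n-2}$ and $J_{n-1}$. Summing those piecewise relations lets me express $\sum_{j=1}^{2^n} S_2(3j-2)$ in terms of $\sum_{j=1}^{2^{n-1}} S_2(3j-2)$ plus an explicit linear correction term involving $n$, $2^{n-1}$, $J_{n-1}$ and $J_{n-2}$. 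Equivalently, and perhaps more transparently, I would apply the recurrence (\ref{rec-1}) of Theorem~\ref{thm-recurr1} with $p=2$ repeatedly, or sum the already-derived Step-3/Step-4/Step-5 structural identities — but since those come later in the paper I will instead keep the argument self-contained via Lemma~\ref{lemma-aux1}.

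Putting the two pieces together, $\nu_2(T(2^n))$ becomes $\nu_2(T(2^{n-1}))$ plus an explicit closed-form increment; I then check that this increment equals $J_{n-1} - J_{n-2}$, which by Lemma~\ref{jacob-prop}(e) is $2^{n-2}$, so the induction closes once the base case (say $n=1$: $T(2)=2$, $\nu_2 = 1 = J_0$) is verified. The main obstacle I anticipate is purely bookkeeping: correctly summing the three-branch formulas of Lemma~\ref{lemma-aux1} over their respective ranges, keeping track of how many integers $j$ in $\{1,\dots,2^{n-1}\}$ have $3j-2$ lying below $2^{n-2}$, between $2^{n-2}$ and $2^{n-1}$, etc., and matching the resulting arithmetic against the Jacobsthal identities in Lemma~\ref{jacob-prop}(b)--(e). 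Once the interval counts are pinned down, the rest is routine algebra; I would organize it so the telescoping $\sum (J_{k-1}-J_{k-2}) = J_{n-1} - J_{0} + J_0 = J_{n-1}$ is visibly the source of the answer.
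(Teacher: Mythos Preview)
Your plan is essentially the paper's own argument: split the sum at $j=2^{n-1}$, use Lemma~\ref{lemma-aux1} to rewrite the upper half of the $S_2(3j-2)$ sum in terms of the lower half, and close by induction on $n$. The paper keeps the two sums $\sum S_2(2j-1)$ and $\sum S_2(3j-2)$ together rather than evaluating the first one in closed form as you propose, but that is purely organizational.

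Where your outline goes astray is in the predicted \emph{shape} of the recurrence. When Lemma~\ref{lemma-aux1} is applied to $\sum_{j=1}^{2^{n-1}} S_2(3\cdot 2^{n-1}+3j-2)$, what comes out is $\sum_{j=1}^{2^{n-1}} S_2(3j-2)$ plus a correction; that lower-half sum is exactly the one already sitting inside $\nu_2(T(2^{n-1}))$, so after reassembly the quantity $\nu_2(T(2^{n-1}))$ appears \emph{twice}, not once. The recurrence that actually falls out is
\[
\nu_2\bigl(T(2^{n})\bigr)=2\,\nu_2\bigl(T(2^{n-1})\bigr)+(-1)^{n-1},
\]
and the induction closes via $J_{n-1}=2J_{n-2}+(-1)^{n-1}$, an immediate consequence of Lemma~\ref{jacob-prop}(b). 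Your expected additive increment $J_{n-1}-J_{n-2}$ is not what the computation produces, and in any case your appeal to part~(e) is mistaken: that part gives $J_{n-1}-J_{n-3}=2^{n-2}$, while consecutive differences $J_{n-1}-J_{n-2}$ run $0,2,2,6,10,22,\ldots$ and are not powers of~$2$. (A smaller slip: $\sum_{k=0}^{2^n-1}S_2(2k+1)=2^{n}+n\cdot 2^{n-1}$, not $2^{n-1}+n\cdot 2^{n-1}$.) None of this is fatal---doing the bookkeeping carefully would have led you to the doubling recurrence---but your anticipated endgame is aimed at the wrong identity.
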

\begin{proof}
We proceed by induction and split
\begin{equation}
\nu_{2} \left( T(2^{n}) \right) = 
\sum_{j=1}^{2^{n}-1} \left[ S_{2}(2j+1) - S_{2}(3j+1) \right]
\end{equation}
\noindent
at $j=2^{n-1}-1$. The first part is identified as 
$\nu_{2} \left( T( 2^{n-1} ) \right)$ to produce
\begin{equation}
\nu_{2} \left( T(2^{n}) \right) = 
\nu_{2} \left( T(2^{n-1}) \right)  +
\sum_{j=0}^{2^{n-1}-1} S_{2}(2j+1+ 2^{n}) 
- \sum_{j=1}^{2^{n-1}} S_{2}(3j-2 + 3 
\cdot 2^{n-1}).
\nonumber
\end{equation}
\noindent
Now observe that $2j+1 \leq 2^{n}-1 < 2^{n}$
so that $S_{2}(2j+1+2^{n}) = S_{2}(2j+1) + 1$.
Lemma \ref{lemma-aux1} gives, for $n$ even, 
\begin{multline}
\sum_{j=1}^{2^{n-1}} S_{2}( 3j-2 + 3 \cdot 2^{n-1} ) 
  =  
\sum_{j=1}^{(2^{n-1}+1)/3} [S_{2}(3j-2) + 2 ] + \\
\sum_{j=(2^{n-1}+1)/3}^{(2^{n}-1)/3} S_{2}(3j-2) + 
\sum_{j=(2^{n}+2)/3}^{2^{n-1}} [S_{2}(3j-2) + 1 ]  \nonumber
\end{multline}
\noindent
and using (\ref{val-2}) yields
\begin{equation}
\nu_{2}(T(2^{n})) = 2 \nu_{2}(T(2^{n-1})) - 1 = 2J_{n-2}-1. 
\end{equation}
\noindent
Elementary
properties of Jacobsthal numbers show that $2J_{n-2} - 1 = J_{n-1}$ proving
the result for $n$ even. The argument for $n$ odd is similar. 
\end{proof}

\medskip

The next theorem corresponds to Step 2 of the outline. \\

\begin{Thm}
\label{thm-odd}
Let $n \in \mathbb{N}$. Then
$T(J_{n})$ is odd.
\end{Thm}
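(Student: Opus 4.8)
The plan is to mimic the inductive structure used in the proof of Theorem \ref{thm-power}, but splitting the sum (\ref{val-2}) at the point that produces a copy of $\nu_2(T(J_{n-1}))$ rather than a copy of $\nu_2(T(2^{n-1}))$. Concretely, I would write
\begin{equation}
\nu_{2}(T(J_{n})) = \sum_{j=1}^{J_{n}-1}\bigl[S_{2}(2j+1) - S_{2}(3j+1)\bigr]
\nonumber
\end{equation}
and split at $j = J_{n-1}-1$ (or equivalently $j = J_{n-2}-1$, choosing whichever makes the shift work — note $J_n - J_{n-1} = 2J_{n-2}$ and $J_n - J_{n-2} = 2^{n-1}$ by Lemma \ref{jacob-prop}). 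The lower piece is exactly $\nu_2(T(J_{n-1}))$, which vanishes by the induction hypothesis, so the whole content of the proof is to show that the remaining piece,
\begin{equation}
\sum_{j=0}^{2J_{n-2}-1} S_{2}\bigl(2j+1 + 2J_{n-1}\bigr)\;-\;\sum_{j=1}^{2J_{n-2}} S_{2}\bigl(3j-2 + 3J_{n-1}\bigr),
\nonumber
\end{equation}
is zero. For the base cases one checks directly that $T(J_0)=T(1)=1$ and $T(J_1)=T(1)=1$ (or $T(J_2)=T(3)$) are odd.

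The key step is a digit-counting lemma in the spirit of Lemma \ref{lemma-aux1}: I need to control $S_2(2j+1+2J_{n-1})$ and $S_2(3j-2+3J_{n-1})$ in the relevant ranges of $j$. Here I would use $2J_{n-1}+1 = 2^n - 2J_{n-2}+1$ and the explicit formula $J_{n-1} = \tfrac13(2^n + (-1)^{n-1})$ from Lemma \ref{jacob-prop}(b), so that $3J_{n-1} = 2^n + (-1)^{n-1}$, i.e. $3J_{n-1}$ is either $2^n+1$ or $2^n-1$ depending on parity. That makes the binary expansion of $3J_{n-1}$ extremely simple ($2^n+1$, or $2^{n}-1 = \underbrace{1\cdots1}_{n}$), which is exactly the kind of clean carry structure that let Lemma \ref{lemma-aux1} go through. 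With these expansions in hand I would break each of the two sums into subranges according to whether the argument crosses a power of $2$, read off the carry pattern, and reassemble — exactly as in Theorem \ref{thm-power}, handling $n$ even and $n$ odd separately because of the $(-1)^{n-1}$.

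The main obstacle I anticipate is bookkeeping rather than conceptual: getting the subrange endpoints right (they will be fractions like $(2^{n}\pm 1)/3$, which are genuine integers precisely because of the parity of $n$, matching the two cases), and making sure the $+1$'s and $+2$'s contributed by the carries in the two sums cancel exactly. A useful internal check is that for $n$ even versus odd the count of "bad" $j$'s (those where a carry changes the digit sum) in the first sum must equal the corresponding count in the second sum; if they match, the telescoping forces the bracket to be $0$ and the induction closes. I would also double-check the argument against the numerical data: $\nu_2(T(n))=0$ at $n=1,3,5,11,21,43,85,\dots$, which are precisely $J_0,J_1$ (or $J_2$), $J_3, J_4, J_5, \dots$, so the indexing convention for $J_n$ must be pinned down before writing the final version.
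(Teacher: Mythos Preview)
Your plan takes a genuinely different route from the paper's, and there is a concrete obstacle you have not addressed.

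The paper does \emph{not} split the sum at $J_{n-1}-1$ (or $J_{n-2}-1$); it splits at $2^{n-1}-1$. The lower piece is then $\nu_{2}(T(2^{n-1}))$, which is evaluated via Theorem \ref{thm-power} (it equals $J_{n-2}$), and the shift in the upper piece is by $2^{n-1}$. The shifted arguments become $2j+1+2^{n}$ and $3j+1+3\cdot 2^{n-1}$, both of which have very simple binary structure (a single high bit, or two adjacent high bits), so Lemma \ref{lemma-aux1} applies directly. The upper piece collapses to $\nu_{2}(T(J_{n-2})) - J_{n-2}$, and the $J_{n-2}$ cancels against $\nu_{2}(T(2^{n-1}))$, yielding the two-step recursion $\nu_{2}(T(J_{n}))=\nu_{2}(T(J_{n-2}))$. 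Theorem \ref{thm-power} is therefore an essential ingredient, not just Step~1 in a parallel outline.

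Your split at $J_{n-1}-1$ correctly exploits that $3J_{n-1}=2^{n}+(-1)^{n-1}$ is binary-clean, which handles the $S_{2}(3j-2+3J_{n-1})$ terms. But you have said nothing about $2J_{n-1}$, and there the structure is not clean: $J_{n-1}$ has an \emph{alternating} binary expansion (e.g.\ $J_{5}=10101_{2}$, $J_{6}=101011_{2}$), and so does $2J_{n-1}$. Controlling $S_{2}(2j+1+2J_{n-1})$ over the full range $0\le j<2J_{n-2}$ therefore requires tracking carries against this alternating pattern, which is qualitatively harder than the two-bit analysis behind Lemma \ref{lemma-aux1}; the number of case boundaries grows with $n$. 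This is more than bookkeeping: you would need a new lemma describing $S_{2}(m+2J_{n-1})-S_{2}(m)$, and you have given no evidence that the resulting cases close up to zero. The same problem arises if you split at $J_{n-2}-1$, since $2J_{n-2}$ has the same alternating form. The paper sidesteps the issue entirely by splitting at a power of $2$, at the cost of invoking Theorem \ref{thm-power}.
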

\begin{proof}
Proposition \ref{prop-valuetn} gives
\begin{equation}
\nu_{2} \left( T(J_{n}) \right) = 
\sum_{j=1}^{J_{n}-1} \left[ S_{2}(2j+1) - S_{2}(3j+1) \right]. 
\end{equation}

\noindent
Observe that $2^{n-1} \leq J_{n} -1$, so 
\begin{eqnarray}
\nu_{2} \left( T(J_{n}) \right) & = & 
\sum_{j=1}^{2^{n-1}-1} \left[ S_{2}(2j+1) - S_{2}(3j+1) \right] +  \nonumber \\
&  &  +
\sum_{j=2^{n-1}}^{J_{n}-1} 
\left[ S_{2}(2j+1) - S_{2}(3j+1) \right]   \nonumber \\
& = & \nu_{2} \left( T ( 2^{n-1} ) \right) + 
\sum_{j=2^{n-1}}^{J_{n}-1} 
\left[ S_{2}(2j+1) - S_{2}(3j+1) \right].   \nonumber 
\end{eqnarray}

Therefore
\begin{equation}
\nu_{2} \left( T(J_{n}) \right)  =  \nu_{2}(T(2^{n-1}))  + 
\sum_{j=0}^{J_{n} - 1 - 2^{n-1}} 
\left[ S_{2} ( 2j+1 + 2^{n}) - S_{2}(3j+1 + 3 \cdot 2^{n-1})  \right].
\nonumber 
\end{equation}
The elementary properties of Jacobsthal numbers give
\begin{equation}
J_{n}-1 - 2^{n-1} = J_{n-2} - 1,
\end{equation}
\noindent
so that
\begin{equation}
\nu_{2} \left( T(J_{n}) \right)  =  \nu_{2}(T(2^{n-1}))   +
\sum_{j=0}^{J_{n-2} - 1} 
\left[ S_{2} ( 2j+1 + 2^{n}) - S_{2}(3j+1 + 3 \cdot 2^{n-1})  \right].
\nonumber
\end{equation}

\noindent
Observe that
\begin{equation}
2j+1 \leq 2(J_{n-2}-1)+1 = 2J_{n-2}-1 = J_{n}-J_{n-1}-1 < 2^{n},
\nonumber
\end{equation}
\noindent
resulting in
\begin{equation}
S_{2}(2j+1+2^{n}) = S_{2}(2j+1) + 1. 
\nonumber
\end{equation}
\noindent
Similarly 
$3j+1 \leq 3J_{n-2}-2 < 3 (2^{n-1}+(-1)^n) -2 \leq 2^{n-1} -1$ and 
from $3 \cdot 2^{n-1} = 2^{n} + 2^{n-1}$ we obtain
\begin{equation}
S_{2}(3j+1 + 3 \cdot 2^{n-1} ) = S_{2}(3j+1) + 2, 
\nonumber
\end{equation}
\noindent
for $0 \leq j \leq J_{n-2}-1$. It follows that
\begin{equation}
\nu_{2} \left( T(J_{n}) \right) = \nu_{2} \left( T(2^{n-1}) \right)  + 
\sum_{j=0}^{J_{n-2}-1} \left[ S_{2}(2j+1) - S_{2}(3j+1) \right] - J_{n-2}.
\nonumber
\end{equation}
\noindent
Theorem \ref{thm-power} shows that the first and third term on the line 
above cancel, leading to 
\begin{equation}
\nu_{2} \left( T(J_{n}) \right)  = 
\nu_{2} \left( T(J_{n-2}) \right). 
\nonumber
\end{equation}
\noindent
The result now follows by induction on $n$.
\end{proof}

\medskip

We continue with the proof of Theorem \ref{thm-odd-0}. The next Lemma
corresponds to Step 3 in the outline. It describes the values
$\nu_{2}(T(j))$ for $J_{n} \leq j \leq J_{n} + 2J_{n-3} = 2^{n} - J_{n-2}$.
The result of Lemma \ref{lemma-shift} shows that
$\nu_{2}(T(j)) > 0$ for $J_{n} < j < 2^{n} - J_{n-2}$. \\

\begin{Lem}
\label{lemma-shift}
For $0 < i \leq 2J_{n-3}$ we have 
\begin{equation}
\nu_{2}(T(J_{n}+i)) = i + \nu_{2}(T(J_{n-2}+i)).
\end{equation}
\end{Lem}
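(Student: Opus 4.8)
The plan is to reduce the identity to a one–variable induction on $i$ driven by the forward difference of $\nu_{2}\circ T$. Write $g(m)=\nu_{2}(T(m))$ and $\delta(m)=g(m+1)-g(m)$. From the formula $(\ref{val-2})$ one reads off $\delta(m)=S_{2}(2m+1)-S_{2}(3m+1)$, and since writing $2m+1$ in base $2$ appends a digit $1$ to the expansion of $m$, we have $S_{2}(2m+1)=S_{2}(m)+1$, so $\delta(m)=1+S_{2}(m)-S_{2}(3m+1)$. I will prove, by induction on $i$ from $0$ to $2J_{n-3}$, the (slightly stronger) statement $g(J_{n}+i)-g(J_{n-2}+i)=i$; the Lemma is its restriction to $0<i\le 2J_{n-3}$. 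The base case $i=0$ is precisely Theorem \ref{thm-odd}, which gives $g(J_{n})=g(J_{n-2})=0$. For the inductive step, since $g(J_{n}+i+1)-g(J_{n-2}+i+1)=\bigl[g(J_{n}+i)-g(J_{n-2}+i)\bigr]+\bigl[\delta(J_{n}+i)-\delta(J_{n-2}+i)\bigr]$, it suffices to show $\delta(J_{n}+i)-\delta(J_{n-2}+i)=1$ for $0\le i\le 2J_{n-3}-1$.

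By the formula for $\delta$, this difference equals $A-B$, where $A=S_{2}(J_{n}+i)-S_{2}(J_{n-2}+i)$ and $B=S_{2}(3(J_{n}+i)+1)-S_{2}(3(J_{n-2}+i)+1)$, and I will show $A=1$ and $B=0$. For $A$: Lemma \ref{jacob-prop}(e) gives $J_{n}=J_{n-2}+2^{n-1}$, and for $i\le 2J_{n-3}-1$, Lemma \ref{jacob-prop}(a),(c) give $J_{n-2}+i\le J_{n-2}+2J_{n-3}-1=J_{n-1}-1<2^{n-1}$; hence the digit at position $n-1$ in $J_{n-2}+i$ is $0$, and adding $2^{n-1}$ raises $S_{2}$ by exactly $1$. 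For $B$: put $M=3(J_{n-2}+i)+1$, so that (using $3\cdot 2^{n-1}=2^{n-1}+2^{n}$) we have $3(J_{n}+i)+1=M+3\cdot 2^{n-1}$. If $2^{n-1}\le M<2^{n}$, write $M=2^{n-1}+L$ with $0\le L<2^{n-1}$; then $M+3\cdot 2^{n-1}=2^{n+1}+L$, and since $L<2^{n-1}<2^{n+1}$ both $M$ and $M+3\cdot 2^{n-1}$ have digit sum $1+S_{2}(L)$, so $B=0$.

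It remains to verify the range bound $2^{n-1}\le M<2^{n}$ for $0\le i\le 2J_{n-3}-1$, and this is the one place where genuine (if routine) attention is required, because of the parity of $n$. Since $M=3J_{n-2}+3i+1$ is increasing in $i$, it is enough to check the two endpoints. Lemma \ref{jacob-prop}(b) gives $3J_{n-2}=2^{n-1}+(-1)^{n}$ and $3J_{n-1}=2^{n}-(-1)^{n}$; hence at $i=0$, $M=2^{n-1}+(-1)^{n}+1\ge 2^{n-1}$, while at $i=2J_{n-3}-1$, $M=3(J_{n-2}+2J_{n-3})-2=3J_{n-1}-2=2^{n}-(-1)^{n}-2\le 2^{n}-1$. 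This yields $A-B=1-0=1$, completing the induction and hence the Lemma.

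In short, the whole proof is an induction on $i$ whose step reduces, via $(\ref{val-2})$, to two elementary base-$2$ digit-sum computations, the first trivial and the second resting on the inequality $2^{n-1}\le 3(J_{n-2}+i)+1<2^{n}$. The main obstacle is keeping the Jacobsthal identities of Lemma \ref{jacob-prop} and the carry bookkeeping in $A$ and $B$ correctly aligned across the two parities of $n$; once $(\ref{val-2})$ and Theorem \ref{thm-odd} are in hand there is no deeper difficulty.
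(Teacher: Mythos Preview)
Your proof is correct and follows essentially the same approach as the paper: both arguments reduce to showing that each summand $S_{2}(2j-1)-S_{2}(3j-2)$ (equivalently your $\delta(m)$) increases by exactly $1$ when the index is shifted from the interval $[J_{n-2}+1,J_{n-2}+i]$ to $[J_{n}+1,J_{n}+i]$, via the same two digit--sum facts (no carry when adding $2^{n}$ in the ``$2j$'' term; a cancelling carry when adding $3\cdot 2^{n-1}$ in the ``$3j$'' term, using $2^{n-1}\le 3(J_{n-2}+i)+1<2^{n}$). The only cosmetic differences are that you phrase this as induction on $i$ and treat both parities of $n$ at once, whereas the paper evaluates the full sum directly, invokes Lemma~\ref{lemma-aux1} for the second digit--sum identity, and does the even case explicitly with ``odd is similar.''
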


\begin{proof}
Assume that $n$ is even and consider
\begin{eqnarray} 
\nu_{2}(T(J_{n}+i)) & = & 
\sum_{j=1}^{J_{n}+i-1} 
\left[ S_{2}(2j+1) - S_{2}(3j+1) \right] \nonumber \\
& = & 
\sum_{j=1}^{J_{n}-1} 
\left[ S_{2}(2j+1) - S_{2}(3j+1) \right] + 
\sum_{j=J_{n}}^{J_{n}+i-1} 
\left[ S_{2}(2j+1) - S_{2}(3j+1) \right]. \nonumber
\end{eqnarray}
\noindent
The first sum is $\nu_{2}(T(J_{n})) = 0$, according to Theorem \ref{thm-odd}. 
Therefore, using Lemma \ref{jacob-prop} we have
\begin{eqnarray} 
\nu_{2}(T(J_{n}+i)) & = & 
\sum_{j=J_{n}}^{J_{n}+i-1} 
\left[ S_{2}(2j+1) - S_{2}(3j+1) \right] \nonumber \\
& = & \sum_{j=J_{n}+1}^{J_{n}+i} 
\left[ S_{2}(2j-1) - S_{2}(3j-2) \right]  \nonumber  \\
& = & \sum_{j=J_{n}+1-2^{n-1}}^{J_{n}+i-2^{n-1}} 
\left[ S_{2}(2^{n}+2j-1) - S_{2}(3 \cdot 2^{n-1} + 3j-2) \right]  \nonumber 
\\
 & =  & \sum_{j=J_{n-2}+1}^{J_{n-2}+i} 
\left[ S_{2}(2^{n}+2j-1) - S_{2}(3 \cdot 2^{n-1} + 3j-2) \right].   \nonumber
\end{eqnarray}
\noindent
The index $j$ satisfies 
\begin{equation}
2j-1 \leq 2(J_{n-2}+i)-1 < 2(J_{n-2}+2J_{n-3}) = 2J_{n-1} < 2^{n},
\nonumber
\end{equation}
\noindent
therefore $S_{2}(2^{n}+2j-1) = 1 + S_{2}(2j-1)$. 

The lower limit in the last sum is 
$J_{n-2} + 1 = \frac{1}{3}(2^{n-1}+1) +1$, and the upper bound is 
\begin{equation}
J_{n-2}+i \leq J_{n-2} + 2J_{n-3} = J_{n-1} = 
\frac{1}{3}(2^{n}-1). 
\end{equation}
\noindent
Lemma \ref{lemma-aux1} gives $S_{2}(3 \cdot 2^{n-1} + 3j-2) = 
S_{2}(3j-2)$. Therefore 
\begin{eqnarray} 
\nu_{2}(T(J_{n}+i)) 
&  =  & \sum_{j=J_{n-2}+1}^{J_{n-2}+i} 
\left[ S_{2}(2j-1) + 1- S_{2}(3j-2) \right] \nonumber \\
&  =  & i + \sum_{j=J_{n-2}+1}^{J_{n-2}+i} 
\left[ S_{2}(2j-1) - S_{2}(3j-2) \right] \nonumber \\
& = & i + \nu_{2}(T(J_{n-2}+i)). \nonumber
\end{eqnarray}
The result has been established for $n$ even. The proof for $n$ odd is
similar. 
\end{proof}

\medskip

The next result shows the graph of $\nu_{2} \circ T$ on the 
interval $[2^{n}-J_{n-2},2^{n}+J_{n-2}]$ is a vertical shift of the 
graph on $[J_{n-1},J_{n}]$.  This corresponds to Step 4 in the outline. \\

\begin{Prop}
\label{prop-shift}
For $0 \leq i \leq 2J_{n-2}$, 
\begin{equation}
\nu_{2}(T(2^{n}-J_{n-2}+i)) = 
\nu_{2}(T(J_{n-1}+i)) + \omega_{n},
\end{equation}
\noindent
where $\omega_{n} = 2J_{n-3}$ is independent of $i$. 
\end{Prop}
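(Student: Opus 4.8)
The strategy mirrors the proofs of Lemma~\ref{lemma-shift} and Theorem~\ref{thm-odd}: expand both valuations via the digit-sum formula~(\ref{val-2}), align the summation ranges by a shift of index, and reduce everything to the behavior of $S_2$ under adding a fixed power-of-two block, controlled by Lemma~\ref{lemma-aux1}. I will treat $n$ even in detail and remark that $n$ odd is analogous.

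\begin{proof}[Proof sketch]
Assume $n$ is even. Write $N := 2^{n} - J_{n-2}$, and recall from Lemma~\ref{jacob-prop} the identities $J_{n-1}+J_{n-2} = 2^{n-1}$ (so $N = 2^{n} - J_{n-2} = J_{n-1} + J_{n-2}$... actually $N = 2^{n-1} + J_{n-1}$), $J_{n} - J_{n-2} = 2^{n-1}$, and $J_{n-2} + 2J_{n-3} = J_{n-1}$, which will be used repeatedly to recognize endpoints. Using~(\ref{val-2}) in the form $\nu_{2}(T(m)) = \sum_{j=1}^{m}\left(S_{2}(2j-1) - S_{2}(3j-2)\right)$ and subtracting, the claim is equivalent to
\begin{equation}
\sum_{j = J_{n-1}+1}^{N+i} \left( S_{2}(2j-1) - S_{2}(3j-2) \right) - \sum_{j = J_{n-1}+1}^{J_{n-1}+i} \left( S_{2}(2j-1) - S_{2}(3j-2) \right) = \omega_{n},
\nonumber
\end{equation}
and the first sum should be split at $j = N$ so that its lower piece $\sum_{j=J_{n-1}+1}^{N}$ contributes a constant and the upper piece $\sum_{j=N+1}^{N+i}$ matches the shifted sum term-by-term. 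First I would handle the upper piece: substitute $j \mapsto j + (N - J_{n-1})$ and note $N - J_{n-1} = 2^{n-1}$, so the upper piece becomes $\sum_{j=J_{n-1}+1}^{J_{n-1}+i}\left( S_{2}(2j-1 + 2^{n}) - S_{2}(3j-2 + 3\cdot 2^{n-1}) \right)$. For these indices one checks $2j-1 < 2^{n}$ (using $j \le J_{n-1}+i \le J_{n-1}+2J_{n-2}$ and the bounds in Lemma~\ref{jacob-prop}), giving $S_{2}(2j-1+2^{n}) = S_{2}(2j-1)+1$; and $3j-2$ falls in the range covered by Lemma~\ref{lemma-aux1} (with $n-1$ in place of $n$), whose three cases each change $S_{2}(3j-2)$ by a constant amount as $j$ crosses the thresholds $J_{n-2}$ and $J_{n-1}$. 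The net effect is that the upper piece equals $\sum_{j=J_{n-1}+1}^{J_{n-1}+i}\left(S_{2}(2j-1)-S_{2}(3j-2)\right)$ plus a correction that telescopes against the lower piece, leaving exactly the $i$-independent constant $\omega_{n}$.

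The lower piece $\sum_{j=J_{n-1}+1}^{N}\left(S_{2}(2j-1)-S_{2}(3j-2)\right)$ must be evaluated to a constant: I would recognize it, again via~(\ref{val-2}), as $\nu_{2}(T(N)) - \nu_{2}(T(J_{n-1}))$, then invoke Theorem~\ref{thm-odd} ($\nu_{2}(T(J_{n-1})) = 0$) and Lemma~\ref{lemma-shift} at the endpoint $i = 2J_{n-3}$, which gives $\nu_{2}(T(2^{n}-J_{n-2})) = \nu_{2}(T(N)) = 2J_{n-3}$ (using $J_{n} + 2J_{n-3} = 2^{n} - J_{n-2}$ and $\nu_{2}(T(J_{n-2}+2J_{n-3})) = \nu_{2}(T(J_{n-1})) = 0$). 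Hence the lower piece equals $2J_{n-3}$, identifying $\omega_{n} = 2J_{n-3}$ once the corrections from the upper-piece analysis are seen to cancel. Assembling: the two contributions combine to give $\nu_{2}(T(N+i)) - \nu_{2}(T(J_{n-1}+i)) = 2J_{n-3}$, as claimed. The case $n$ odd proceeds identically, with the parity-dependent cases of Lemma~\ref{lemma-aux1} swapped.
\end{proof}

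The step I expect to be the main obstacle is the careful bookkeeping in the upper piece: the three-case structure of Lemma~\ref{lemma-aux1} means the correction term to $\sum\left(S_{2}(2j-1)-S_{2}(3j-2)\right)$ is piecewise-constant in $i$ (jumping as $J_{n-1}+i$ passes the thresholds $2^{n-1}-J_{n-2}$, $J_{n-2}$, etc.), and one must verify that these jumps are exactly absorbed by a matching piecewise structure in how the lower piece was truncated — equivalently, that $\nu_{2}(T(J_{n-1}+i))$ itself already carries those same jumps, which is precisely the content of Lemma~\ref{lemma-shift} applied on $[J_{n-1}, J_{n}]$. Making this cancellation airtight, rather than merely plausible from the figures, is where the real work lies; the endpoint check via Theorem~\ref{thm-odd} and the bound $2j-1 < 2^{n}$ are routine by comparison.
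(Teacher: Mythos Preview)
Your overall strategy---reduce to the term-by-term identity
\[
S_{2}\bigl(2(J_{n-1}+i)-1 + 2^{n}\bigr) - S_{2}\bigl(2(J_{n-1}+i)-1\bigr)
\;=\;
S_{2}\bigl(3(J_{n-1}+i)-2 + 3\cdot 2^{n-1}\bigr) - S_{2}\bigl(3(J_{n-1}+i)-2\bigr)
\]
after shifting the index by $N-J_{n-1}=2^{n-1}$, and then identify the constant via $\nu_{2}(T(N))=2J_{n-3}$ from Lemma~\ref{lemma-shift} and Theorem~\ref{thm-odd}---is exactly the paper's approach. The paper phrases it as ``same discrete derivative'', which is the same identity, and your computation of the constant at $i=0$ is in fact cleaner than what the paper writes.

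Where your sketch breaks is in the analysis of the two sides of this identity. The claim ``$2j-1<2^{n}$ for $j\le J_{n-1}+2J_{n-2}$'' is false: $J_{n-1}+2J_{n-2}=J_{n}$ and $2J_{n}-1>2^{n}$ (e.g.\ $n=4$: $2J_{4}-1=21>16$). The correct picture is that $S_{2}(2j-1+2^{n})-S_{2}(2j-1)$ equals $1$ for $1\le i\le J_{n-2}$ and $0$ for $J_{n-2}+1\le i\le 2J_{n-2}$; it is \emph{not} constantly $1$. Similarly, Lemma~\ref{lemma-aux1} with $n-1$ in place of $n$ only covers $1\le j\le 2^{n-1}$, while your $j$ runs up to $J_{n}>2^{n-1}$, so it does not apply as stated; the paper instead analyzes the binary expansion of $3(J_{n-1}+i)-2$ directly and shows this second difference is \emph{also} $1$ on $[1,J_{n-2}]$ and $0$ on $[J_{n-2}+1,2J_{n-2}]$. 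The two piecewise corrections therefore cancel \emph{against each other, term by term}---not against the lower piece (which is a constant independent of $i$) and not against some feature of $\nu_{2}(T(J_{n-1}+i))$. Once you see that both sides equal the common step function $h_{n}(i)$, the upper piece matches the subtracted sum with zero correction and the proof closes.
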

\begin{proof}
We prove that the graph of $\nu_{2}(T(J_{n-1} + i))$  and 
$\nu_{2}(T(2^{n}-J_{n-2} + i))$ have the same discrete derivative.   This
amounts to checking the identity
\begin{multline}
\nu_{2}(T(J_{n-1}+i)) - 
\nu_{2}(T(J_{n-1}+i-1))  =  \\
\nu_{2}(T(2^{n}-J_{n-2}+i))  - 
\nu_{2}(T(2^{n}-J_{n-2}+i-1))
\end{multline}
\noindent
for $1 \leq i \leq 2J_{n-2}$. Observe that 
\begin{equation}
\nu_{2}(T(k)) - \nu_{2}(T(k-1)) = S_{2}(2k-1) - S_{2}(3k-2),
\end{equation}
\noindent
and using $2^{n}-J_{n-2} = 2^{n-1}+J_{n-1}$, we conclude that the 
result is equivalent to the identity 
\begin{multline}
S_{2}(2^{n} + 2(J_{n-1}+i)-1) - 
S_{2}(2(J_{n-1}+i)-1) = \\
S_{2}(3 \cdot 2^{n-1} + 3(J_{n-1}+i)-2)
- S_{2}(3(J_{n-1}+i)-2), \label{iden-1}
\end{multline}
\noindent
for $1 \leq i \leq 2J_{n-2}$. Define 
\begin{equation}
h_{n}(i) = \begin{cases}
             1 \quad & \text{ if } 1 \leq i \leq J_{n-2}; \\
             0 \quad & \text{ if } J_{n-2} + 1 \leq i \leq 2J_{n-2}.
 \end{cases}
\end{equation}
\noindent
The assertion is that both sides in (\ref{iden-1})  agree with $h_{n}(i)$. 
The analysis of the left hand side is easy: the condition 
$1 \leq i \leq J_{n-2}$ implies $2(J_{n-1}+i)-1 \leq 2^{n}-1$. Thus,
the term $2^{n}$ does not interact with the binary expansion 
$2(J_{n-1}+i)-1$ and produces 
the extra $1$. On the other hand, if $J_{n-2}+ 1 \leq i \leq 2 J_{n-2}$, then 
\begin{multline}
2^{n}+1  = 2(J_{n-1}+J_{n-2}+1) -1 \leq 2(J_{n-1}+i)-1 \\
\leq 2(J_{n-1}+2J_{n-2})-1 = 2J_{n}-1 < 2^{n+1}-1.
\end{multline}
\noindent
We conclude that the binary expansion of $x := 2(J_{n-1}+i)-1$ is of the 
form $a_{0}+a_{1}\cdot 2 + \cdots + a_{n-1} \cdot 2^{n-1} + 1 \cdot 2^{n}$.
It follows that $2^{n} + x$ and $x$ have the same number of $1$'s in their
binary expansion. Thus
$S_{2}(x) = S_{2}(x+2^{n})$ as claimed.  \\

The analysis of the right hand side of (\ref{iden-1}) is slightly more 
difficult. Let $x := 3(J_{n-1}+i)-2$ and it is required to compare 
$S_{2}(x)$ and $S_{2}(3 \cdot 2^{n-1} + x)$. Observe that 
\begin{equation}
x \leq 3(J_{n-1} + 2J_{n-2} ) -2 = 3J_{n}-2 = 2^{n+1} + (-1)^{n} - 2 < 2^{n+1}
\end{equation}
\noindent
and 
\begin{equation}
x \geq 3(J_{n-1}+1) -2 = 2^{n} + (-1)^{n-1} +1 \geq 2^{n}.
\end{equation}
\noindent
We conclude that the binary expansion of $x$ is of the form 
\begin{equation}
x = a_{0} + a_{1} \cdot 2 + \cdot + a_{n-1} \cdot 2^{n-1} + 1 \cdot 2^{n},
\end{equation}
\noindent 
and the corresponding one for $3 \cdot 2^{n-1}$ is $2^{n} + 2^{n-1}$. An 
elementary calculation shows that 
$S_{2}(x + 3 \cdot 2^{n-1} ) - S_{2}(x)$ is $1$ if $a_{n-1}=0$ and $0$ if 
$a_{n-1}=1$. In order to transform this inequality to a restriction on the
index $i$, observe that $a_{n-1}=1$ is equivalent to $x - 2^{n} \geq 2^{n-1}$.
Using the value of $x$ this becomes $3(J_{n-1}+i)-2) \geq 3 \cdot 2^{n-1}$.
This is directly transformed to $i \geq J_{n-2}+1$. This shows that the 
right hand side of (\ref{iden-1}) also agrees with $h_{n}$ and (\ref{iden-1})
has been established. 
\end{proof}

\medskip

The final step in the proof of Theorem \ref{thm-odd-0}, outlined as Step 5, 
shows the symmetry of the graph 
of $\nu_{2}(T(j))$ about the point $j = 2^{n}$. The range covered in the 
next proposition is $2^{n}-J_{n-1} \leq j \leq 2^{n}+J_{n-1}$. \\

\begin{Prop}
\label{prop-symmetry}
For $1 \leq i \leq J_{n-1}$,
\begin{equation}
\nu_{2}(T(2^{n}-i)) = \nu_{2}(T(2^{n}+i)). 
\end{equation}
\end{Prop}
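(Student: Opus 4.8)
The plan is to prove the symmetry $\nu_{2}(T(2^{n}-i)) = \nu_{2}(T(2^{n}+i))$ by a discrete-derivative (telescoping) argument, exactly in the spirit of the proof of Proposition~\ref{prop-shift}. Writing $g(k) := \nu_{2}(T(k)) - \nu_{2}(T(k-1)) = S_{2}(2k-1) - S_{2}(3k-2)$, the identity $\nu_{2}(T(2^{n}-i)) = \nu_{2}(T(2^{n}+i))$ for $1 \le i \le J_{n-1}$ is equivalent, after subtracting the $i$ and $i-1$ cases and using that the claim holds trivially at $i=0$, to
\begin{equation}
g(2^{n}+i) = g(2^{n}-i+1) \qquad \text{for } 1 \le i \le J_{n-1}.
\nonumber
\end{equation}
So the whole problem reduces to a statement about binary digit sums: comparing $S_{2}(2(2^{n}+i)-1)$ with $S_{2}(2(2^{n}-i+1)-1)$, and $S_{2}(3(2^{n}+i)-2)$ with $S_{2}(3(2^{n}-i+1)-2)$.

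First I would handle the ``$2k-1$'' halves. For $1 \le i \le J_{n-1} < 2^{n}$ one has $2(2^{n}+i)-1 = 2^{n+1} + (2i-1)$ with $0 < 2i-1 < 2^{n+1}$, so $S_{2}(2(2^{n}+i)-1) = 1 + S_{2}(2i-1)$; and $2(2^{n}-i+1)-1 = 2^{n+1} - (2i-1)$, so I would use the complementation identity $S_{2}(2^{m}-t) = m - (t-1) + S_{2}(t-1)$ type formula (equivalently, $S_{2}(2^{m}-t)$ relates to $S_{2}(t-1)$), to reduce $S_{2}(2^{n+1}-(2i-1))$ to an expression in $S_{2}(2i-2) = S_{2}(2(i-1))= S_{2}(i-1)$. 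Matching the two sides will give the desired equality for the $2k-1$ part, using only the bound $2i-1 \le 2J_{n-1}-1 < 2^{n+1}$.

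The harder half is the ``$3k-2$'' terms, and this is where I expect the main obstacle. Here $3(2^{n}+i)-2 = 3\cdot 2^{n} + (3i-2)$ and $3(2^{n}-i+1)-2 = 3\cdot 2^{n} - (3i-2) + 2 = 3\cdot 2^{n} - (3i-4)$; note $3\cdot 2^{n} = 2^{n+1}+2^{n}$, so the ``carry pattern'' when adding or subtracting $3i-2$ interacts with two adjacent bits $2^{n}$ and $2^{n+1}$ rather than one. The natural tool is Lemma~\ref{lemma-aux1}, but as stated that lemma is about $S_{2}(3\cdot 2^{n}+3j-2)$ and $S_{2}(3\cdot 2^{n}-3j+1)$; the exponent and the linear forms do not line up verbatim with what is needed here, so I would either re-derive a parallel lemma for the shift $3\cdot 2^{n}\pm(3i-2)$ (splitting the range $1 \le i \le J_{n-1}$ at $i = J_{n-2}$ and $i = J_{n-3}$, according to where $3i-2$ falls relative to the powers $2^{n-1}, 2^{n}$), or — cleaner — invoke Lemma~\ref{lemma-aux1} with a shifted index. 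In either case the computation is the same flavor as in Lemma~\ref{lemma-aux1}: track which of the bits $2^{n-1}, 2^{n}, 2^{n+1}$ are ``hit'' by the binary expansion of $3i-2$, and read off the constant added to $S_{2}(3i-2)$.

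Finally I would assemble: having shown $g(2^{n}+i) = g(2^{n}-i+1)$ for $1 \le i \le J_{n-1}$, I sum over $i$ (telescoping on each side): $\sum_{i=1}^{I} g(2^{n}+i) = \nu_{2}(T(2^{n}+I)) - \nu_{2}(T(2^{n}))$ while $\sum_{i=1}^{I} g(2^{n}-i+1) = \nu_{2}(T(2^{n})) - \nu_{2}(T(2^{n}-I))$, so the two telescoped sums being equal gives exactly $\nu_{2}(T(2^{n}+I)) = \nu_{2}(T(2^{n}-I))$ for all $1 \le I \le J_{n-1}$, which is the Proposition. (As a sanity check, the endpoint $i = J_{n-1}$ should recover $2^{n}+J_{n-1} = J_{n+1}$ and $2^{n}-J_{n-1} = J_{n-1}$, both of which are Jacobsthal numbers with $T$ odd by Theorem~\ref{thm-odd}, consistent with the claimed equality of valuations.) The only genuinely delicate bookkeeping is the digit-sum analysis of the $3k-2$ terms described above; everything else is routine telescoping and the already-proved $2$-adic facts.
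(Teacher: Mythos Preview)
Your overall strategy --- reduce to a pointwise identity for the discrete derivative $g(k)=S_{2}(2k-1)-S_{2}(3k-2)$ and then telescope --- is exactly the paper's approach. But there is a sign error that runs through the argument. Subtracting the $i$ and $i-1$ cases of $\nu_{2}(T(2^{n}-i))=\nu_{2}(T(2^{n}+i))$ gives
\[
\nu_{2}(T(2^{n}+i))-\nu_{2}(T(2^{n}+i-1)) \;=\; \nu_{2}(T(2^{n}-i))-\nu_{2}(T(2^{n}-i+1)),
\]
i.e.\ $g(2^{n}+i)=-g(2^{n}-i+1)$, \emph{not} $g(2^{n}+i)=g(2^{n}-i+1)$. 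Correspondingly, your final assembly is wrong as written: if the two telescoped sums were \emph{equal} you would only get $\nu_{2}(T(2^{n}+I))+\nu_{2}(T(2^{n}-I))=2\,\nu_{2}(T(2^{n}))$, a point-reflection, not the mirror symmetry claimed. You need the sums to be \emph{negatives} of each other, which is what the corrected identity yields.

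There is also an arithmetic slip on the $3k-2$ side: $3(2^{n}-i+1)-2=3\cdot 2^{n}-3i+1$, not $3\cdot 2^{n}-(3i-4)$. Once you write it correctly, Lemma~\ref{lemma-aux1} applies verbatim with $j=i$ (no re-derivation needed): for $1\le i\le J_{n-1}$ one is in the first case of the lemma, and adding $S_{n,i}^{+}$ and $S_{n,i}^{-}$ gives $S_{2}(3\cdot 2^{n}+3i-2)+S_{2}(3\cdot 2^{n}-3i+1)=n+3$, which is exactly the paper's identity~(\ref{ident-symm}). Combining this with $S_{2}(2^{n+1}+2i-1)=1+S_{2}(2i-1)$ and $S_{2}(2^{n+1}-(2i-1))=n+2-S_{2}(2i-1)$ gives $g(2^{n}+i)+g(2^{n}-i+1)=0$ directly. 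With these two corrections your proof is complete and coincides with the paper's.
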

\begin{proof}
Start with 
\begin{eqnarray}
\nu_{2}(T(2^{n})) - \nu_{2}(T(2^{n}-i)) & = & 
\sum_{j=2^{n}-i+1}^{2^{n}} \left[ S_{2}(2j-1) - S_{2}(3j-2) \right] 
\nonumber \\
& = & \sum_{k=1}^{i} \left[ S_{2}(2^{n+1}-(2k-1)) - 
S_{2}( 3 \cdot 2^{n} -(3k-1)) \right]. 
\nonumber
\end{eqnarray}
\noindent
The first term in the sum satisfies 
\begin{equation}
S_{2}(2^{n+1} -(2k-1) ) = n+2 - S_{2}(2k-1).
\end{equation}
\noindent
To check this, write $2k-1 = a_{0} + a_{1} \cdot 2 + \cdots + a_{r} 
\cdot 2^{r}$ with $a_{0} = 1$ because $2k-1$ is odd. Now, 
$2^{n+1} = (1 + 2 + 2^{2} + \cdots + 2^{n} ) + 1$ and we conclude that
\begin{eqnarray}
2^{n+1} - (2k-1) & = & ( 2^{n} + 2^{n-1} + \cdots + 2^{r+1} ) \nonumber \\
 &   & + (1-a_{r}) \cdot 2^{r} + (1-a_{r+1}) \cdot 2^{r-1} + \cdots + 
(1-a_{1}) \cdot 2 + 1  \nonumber 
\end{eqnarray}
\noindent
Therefore
\begin{eqnarray}
S_{2}( 2^{n+1} - (2k-1))  
& =  & n+1 - \left( a_{r}+a_{r-1} + \cdots + a_{1} \right)  \nonumber  \\
& =  & n+2 - S_{2}(2k-1). \nonumber  
\end{eqnarray}

We conclude that
\begin{multline}
\nu_{2}(T(2^{n})) - \nu_{2}(T(2^{n}-i)) = 
(n+2)i - \sum_{k=1}^{i} S_{2}(2k-1) -  \\
\sum_{k=1}^{i} S_{2}( 3 \cdot 2^{n} - (3k-1)). 
\end{multline}

Similarly
\begin{eqnarray}
\nu_{2}(T(2^{n}+i)) - \nu_{2}(T(2^{n}))  & = & 
\sum_{j=2^{n}+1}^{2^{n}+i} \left( S_{2}(2j-1) - S_{2}(3j-2) \right) 
\nonumber \\
& =  & \sum_{k=1}^{i} \left( S_{2}(2^{n+1}+2k-1) - S_{2}(3 \cdot 2^{n} + 3k-2 ) 
\right). \nonumber
\end{eqnarray}
\noindent
The inequality
\begin{equation}
2k-1 \leq 2i-1 \leq 2J_{n-1}-1 \leq 2 \cdot 2^{n-1} -1 
\leq 2^{n}-1 < 2^{n+1}
\end{equation}
\noindent
shows that $S_{2}(2^{n+1} + 2k-1 ) = 1 + S_{2}(2k-1)$. Lemma \ref{lemma-aux1}
yields the identity
\begin{equation}
S_{2}(3 \cdot 2^{n} + 3k-2 ) + S_{2}(3 \cdot 2^{n} -3k+1) = n+3.
\label{ident-symm}
\end{equation}
\noindent
Therefore
\begin{eqnarray}
\nu_{2}(T(2^{n}+i)) - \nu_{2}(T(2^{n}))  & = & 
\sum_{k=1}^{i} \left( S_{2}(2^{n+1}+2k-1) - S_{2}(3 \cdot 2^{n} + 3k-2 ) 
\right) + i \nonumber \\
& & +  \sum_{k=1}^{i} S_{2}(2k-1) - 
\left( n+3 - S_{2}(3 \cdot 2^{n} -3k+1) \right). \nonumber
\end{eqnarray}
\noindent
It follows that 
\begin{equation}
\nu_{2}(T(2^{n})) - \nu_{2}( T(2^{n}-i)) = 
- \left[ \nu_{2}( T(2^{n}-i))  - \nu_{2}(T(2^{n})) \right],
\nonumber
\end{equation}
\noindent
and symmetry has been established.
\end{proof}

\noindent
{\bf Note}. The identity (\ref{ident-symm}) can be given a direct proof by 
inducting on $k$. It 
is required to check that the left hand side is independent of $k$ and this
follows from the identity
\begin{equation}
S_{2}(m+3) - S_{2}(m) = 
\begin{cases}
2 - \omega_{2} \left( \frac{m}{2} \right) \quad  \text{ if } 
m \equiv 0 \bmod 2; \\
 - \omega_{2} \left( \lfloor{ {{m} \over {4}} \rfloor}\right)  
\quad  \text{ if } 
m \equiv 1 \bmod 2.
\end{cases}
\end{equation}
\noindent
Here $\omega_{2}(m)$ is the number of trailing $1$'s in the binary expansion 
of $m$. For $m= 829$ we have $S_{3}(829) = 7$ and $S_{3}(832) = 3$. The 
binary expansion of $m=207 = \lfloor{829/4 \rfloor}$ is $11001111$ and the 
number of trailing $1's$ is $4$. This 
observation is due to A. Straub. 

\medskip

The next result shows that every positive integer $k$ is attained as
$\nu_{2}(T(n))$. 

\begin{Thm}
Every nonnegative integer appears as $\nu_2(T(n))$ for some $n$, i.e.,
$$\mathbb{N} = \{\nu_2(T(n)) \, :  n \in \mathbb{N} \}.$$
Furthermore, each positive integer $m$ appears only finitely many times, 
and the last appearance is when $n = J_{2m+1}-1$.
\end{Thm}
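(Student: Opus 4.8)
The plan is to extract everything from the block description of the graph of $\nu_{2}\circ T$ already obtained in Lemma~\ref{lemma-shift}, Proposition~\ref{prop-shift} and Proposition~\ref{prop-symmetry} (equivalently, from the proof of Theorem~\ref{thm-odd-0}): the whole statement reduces to controlling the smallest value taken by $\nu_{2}(T(j))$ on the interior of a generic block $[J_{N},J_{N+1}]$.

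First I would pin down one explicit value. Iterating Lemma~\ref{lemma-shift} with $i=1$ gives $\nu_{2}(T(J_{n}+1))=1+\nu_{2}(T(J_{n-2}+1))$ for every $n\ge 3$, and telescoping down to $\nu_{2}(T(J_{2}+1))=\nu_{2}(T(4))=1$ (immediate from (\ref{val-2})) yields $\nu_{2}(T(J_{2m}+1))=m$ for all $m\ge 1$. Since $T(1)=1$ gives the value $0$, this already proves $\mathbb{N}=\{\nu_{2}(T(n)):n\in\mathbb{N}\}$. Moreover $J_{2m+1}=2^{2m}+J_{2m-1}$ and $2^{2m}-J_{2m-1}=J_{2m}$ (both from Lemma~\ref{jacob-prop}), so the symmetry of Proposition~\ref{prop-symmetry} about $2^{2m}$ gives $\nu_{2}(T(J_{2m+1}-1))=\nu_{2}(T(J_{2m}+1))=m$; hence the index $J_{2m+1}-1$ really does realize $m$.

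Next I would establish a recursion for $\mu_{N}:=\min\{\nu_{2}(T(j)):J_{N}<j<J_{N+1}\}$. By Proposition~\ref{prop-symmetry} it is enough to minimize over $J_{N}<j\le 2^{N}$, and I would split that range at $2^{N}-J_{N-2}$. On $(J_{N},2^{N}-J_{N-2}]$ Lemma~\ref{lemma-shift} gives $\nu_{2}(T(J_{N}+i))=i+\nu_{2}(T(J_{N-2}+i))$ with $1\le i\le 2J_{N-3}$; for $i<2J_{N-3}$ the point $J_{N-2}+i$ lies in the interior of block $N-2$, whence $\nu_{2}(T(J_{N-2}+i))\ge\mu_{N-2}$, while $i=2J_{N-3}$ gives the value $2J_{N-3}$, so this part is $\ge\min(1+\mu_{N-2},\,2J_{N-3})$. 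On $(2^{N}-J_{N-2},2^{N}]$ Proposition~\ref{prop-shift} shows the values are those of block $N-1$ raised by $2J_{N-3}$, hence $\ge 2J_{N-3}$. Therefore $\mu_{N}\ge\min(1+\mu_{N-2},\,2J_{N-3})$ for $N\ge 4$. With the base cases $\mu_{2}=1$, $\mu_{3}=2$ and the crude bound $2J_{N-3}\ge 2(N-3)\ge\lceil N/2\rceil$ for $N\ge 4$ (using $J_{k}\ge k$), an induction on $N$ gives $\mu_{N}\ge\lceil N/2\rceil$ for all $N\ge 2$.

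To finish: if $\nu_{2}(T(n))=m$ with $m\ge 1$, then $T(n)$ is even, so by Theorem~\ref{thm-odd-0} the index $n$ is not a Jacobsthal number and hence lies in the interior of some block $[J_{N},J_{N+1}]$; then $m\ge\mu_{N}\ge\lceil N/2\rceil$, forcing $N\le 2m$ and so $n<J_{N+1}\le J_{2m+1}$, i.e.\ $n\le J_{2m+1}-1$. Thus each positive $m$ occurs at only finitely many $n$, all with $n\le J_{2m+1}-1$, and by the second paragraph it does occur at $n=J_{2m+1}-1$. The main obstacle is the recursion $\mu_{N}\ge\min(1+\mu_{N-2},2J_{N-3})$: one has to keep careful track of the two (really three) sub-intervals of $[J_{N},2^{N}]$ and of the values at the seams $J_{N}$, $2^{N}-J_{N-2}$, $2^{N}$, and check that a small value of $\nu_{2}\circ T$ can only occur next to the left endpoint $J_{N}$ --- which is exactly why the minimizing index is $J_{N}+1$, in agreement with the explicit computation of the second paragraph. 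Everything else (the base cases, the estimate $2J_{N-3}\ge\lceil N/2\rceil$, and the telescopings) is routine.
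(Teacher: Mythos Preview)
Your proposal is correct and follows essentially the same route as the paper's proof: both derive everything from the block-structure lemmas (Lemma~\ref{lemma-shift}, Proposition~\ref{prop-shift}, Proposition~\ref{prop-symmetry}), identify the value $\nu_{2}(T(J_{n}+1))$ via the recursion coming from Lemma~\ref{lemma-shift} with $i=1$, transfer it to $J_{n+1}-1$ by symmetry, and argue that the interior minimum on each block grows without bound. The one difference is in how that last point is handled: the paper simply asserts, citing ``the results before'', that $\nu_{2}(T(J_{n}+i))>\nu_{2}(T(J_{n}+1))$ for all interior $i$, so the block minimum is \emph{exactly} $\nu_{2}(T(J_{n}+1))$; you instead prove the clean recursive inequality $\mu_{N}\ge\min(1+\mu_{N-2},\,2J_{N-3})$ and extract $\mu_{N}\ge\lceil N/2\rceil$, which is weaker but entirely sufficient for the conclusion. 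Your version is in fact a bit more self-contained, since the paper's strict inequality is not literally stated in the preceding lemmas and needs precisely the kind of inductive bookkeeping you carry out.
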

\begin{proof}
From the results before, we know that 
\begin{equation}
\nu_2(T(J_n+i)) > \nu_2(T(J_n+1)) 
= \nu_2(T(J_{n+1}-1)), \nonumber
\end{equation}
\noindent
for $1 < i < J_{n+1}-J_n-2$ and
$\nu_2(T(J_{n+2}-1)) = \nu_2(T(J_n-1)) + 1$. This shows that 
the minimum values of the graph of $\nu_2(T(n))$ around $2^n$
are attained exactly at $J_n+1$ and $J_{n+1}-1$.
These values are also {\em strictly} increasing along the even and odd
indices. Thus, $m < \nu_{2}(T(i))$ 
for any given $m$, provided $i$ is large enough.

To determine the last appearance of $m$, we only need to 
determine the last occurance of $n$ 
such that $\nu_2(T(J_n-1)) = m$. Since
$\nu_2(T(J_2-1)) = \nu_2(T(J_3-1)) = 1,$ we conclude 
that $\nu_2(T(J_{2n}-1))=\nu_2(T(J_{2{n+1}}-1))=n$.
Therefore the last occurance for $m$ is at $J_{2m+1}-1$.
\end{proof}

\noindent
{\bf Note}. Define $\lambda(m)$ to be the number $m$ 
is attained by $\nu \circ T$. The values for $1 \leq m \leq 8$ are
shown below.

\begin{table}[h]
\begin{center}
\begin{tabular}{||c||c|c|c|c|c|c|c|c||}
\hline 
$m $& 1 & 2 & 3 & 4 & 5 & 6 & 7 & 8   \\ \hline 
$\lambda(m)$ & 2 & 8 & 5 & 12 & 5 & 14 & 8 & 14  \\
\hline
\end{tabular}
\end{center}
\vskip 0.2in
\caption{The first $8$ values in the range of $\nu_{2} \circ T$} 
\end{table}

\noindent
For example, the values of $n$ for which $\nu(T(n))=5$ are 
$16, \, 342, \, 682, \, 684$ and 
$J_{11} -1  = 1364$ and the eight solutions to $\nu(T(n))=7$ are
$26, \, 38, \, 46, \, 82, \, 5462,
\newline 
\, 10922, \, 10924$ and $J_{15}-1 = 21844$. \\

\noindent
{\bf Note}. In sharp contrast to the $2$-adic valuation, D. Frey 
and J. Sellers \cite{frey-sellers2, frey-sellers3}
show that if $p \geq 3$ is
a prime, then for each nonnegative integer $m$ there
exist infinitely many positive integers $n$ for which $\nu_{p}(T(n)) = m$.

\section{The $3$-adic valuation of $T(n)$} \label{sec-three}
\setcounter{equation}{0}

The analysis of the $2$-adic valuation of $T(n)$ is now extended to the 
prime $p=3$. The discussion employs the expansion of $n \in \mathbb{N}$
in base $3$, given by 
\begin{equation}
n = a_{0} + a_{1} \cdot 3 + a_{2} \cdot 3^{2} + \cdots + 
a_{r} \cdot 3^{r}
\label{base3-exp}
\end{equation}
\noindent
and the function
\begin{equation}
S_{3}(n) := a_{0} + a_{1} + \cdots + a_{r}. 
\label{sum-base3}
\end{equation}

{{
\begin{figure}[ht]
\begin{center}
\centerline{\epsfig{file=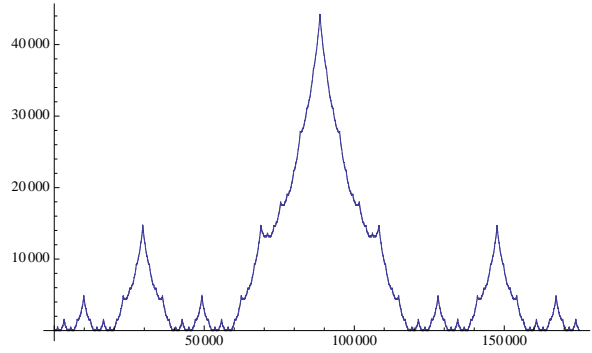,width=20em,angle=0}}
\caption{The $3$-adic valuation of $T(n)$}
\label{figure-2}
\end{center}
\end{figure}
}}

Figure \ref{figure-2} presents a well-defined symmetry 
for $\nu_{3}(T(n))$. This is explained in Theorem \ref{thm-3symmetry}. \\ 

The first result characterizes the values $n$ for which $\nu_{3}(T(n)) = 0$. 

\begin{Thm}
\label{val3-zero}
Let $n \in \mathbb{N}$ with (\ref{base3-exp}) as its expansion in base $3$. Then
$\nu_{3}(T(n)) = 0$ if and only if there is 
an index $0 \leq i \leq r$ such that $a_{0}=a_{1}= \cdots = a_{i-1} = 0$
and $a_{i+1}=a_{i+2} = \cdots = a_{r} = 0 \text{ or } 2$, with 
$a_{i}$ arbitrary.
\end{Thm}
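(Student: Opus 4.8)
The plan is to turn $\nu_3(T(n))$ into an explicit function of the base-$3$ digits $a_0,\dots,a_r$ of $n$ and then simply read off its zero set. Starting from Corollary~\ref{p-val} and the elementary identity $S_3(3j+1)=S_3(j)+1$ (since $3j$ is the expansion of $j$ with a $0$ appended), one gets
\[
\nu_3(T(n))=\tfrac12\sum_{k=0}^{n-1}\bigl(S_3(n+k)-S_3(k)-1\bigr).
\]
By Legendre's formula the central quantity $\nu_3\binom{n+k}{k}=\tfrac12\bigl(S_3(n)+S_3(k)-S_3(n+k)\bigr)$ equals, by Kummer's theorem, the number $c(n,k)$ of carries in the base-$3$ addition $n+k$; hence $S_3(n+k)=S_3(n)+S_3(k)-2\,c(n,k)$ and
\[
\nu_3(T(n))=\tfrac12\,n\bigl(S_3(n)-1\bigr)-\sum_{k=0}^{n-1}c(n,k).
\]

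Next I would evaluate $\sum_{k=0}^{n-1}c(n,k)$ using $c(n,k)=\#\{\,j\ge 1:(n\bmod 3^j)+(k\bmod 3^j)\ge 3^j\,\}$: interchanging the two sums and, for each $j$, counting the $k\in[0,n)$ with $k\bmod 3^j\ge 3^j-(n\bmod 3^j)$ gives $\lfloor n/3^j\rfloor\,(n\bmod 3^j)+\max\bigl(0,\,2(n\bmod 3^j)-3^j\bigr)$. Feeding this back and carrying out the (elementary but slightly lengthy) bookkeeping — the key point being that $\max(0,2(n\bmod 3^j)-3^j)$ is nonzero exactly when, reading the low $j$ digits of $n$ from the top, the first one different from $1$ is a $2$, in which case it equals $3^m+2(n\bmod 3^m)$ for the position $m$ of that digit — one arrives at the clean identity
\[
\nu_3(T(n))=\sum_{m:\,a_m=1}\bigl(n\bmod 3^m\bigr)\;-\;\sum_{m:\,a_m=2}u_m\bigl(3^m+2(n\bmod 3^m)\bigr),
\]
where $u_m$ is the number of consecutive digits equal to $1$ lying immediately above position $m$ (so $u_m=0$ when $a_{m+1}\neq 1$).

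With this formula in hand the rest is a short inequality argument. For a position $m$ with $a_m=2$, the $u_m$ positions $m+1,\dots,m+u_m$ all carry the digit $1$, and as $m$ ranges over the $2$-positions these blocks are pairwise disjoint subsets of $\{m:a_m=1\}$; moreover, for $p$ in such a block, $n\bmod 3^p\ge n\bmod 3^{m+1}=2\cdot 3^m+(n\bmod 3^m)\ge 3^m+2(n\bmod 3^m)$. Summing over the blocks, the first sum dominates the second, so $\nu_3(T(n))\ge 0$ (in particular re‑proving $3$-integrality of $T(n)$), and equality forces every inequality used to be an equality. The step $2\cdot 3^m+(n\bmod 3^m)\ge 3^m+2(n\bmod 3^m)$ is strict because $n\bmod 3^m<3^m$, so equality requires $u_m=0$ for every $2$-position — no digit $1$ sits immediately above a digit $2$. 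The remaining inequality $\sum_{m:a_m=1}(n\bmod 3^m)\ge 0$ must then also be an equality, i.e. $n\bmod 3^m=0$ whenever $a_m=1$; since $n\bmod 3^m=0$ means $a_0=\dots=a_{m-1}=0$, there is at most one digit equal to $1$, and if it sits at position $i$ then all lower digits vanish (and automatically all higher digits lie in $\{0,2\}$). Conversely, any $n$ of this shape has $u_m=0$ at every $2$-position and every $1$-position contributing $0$, so $\nu_3(T(n))=0$. This is precisely the condition of the theorem — the subcase "$n$ has no digit equal to $1$'' corresponding to taking $i$ to be the lowest nonzero position.

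The main obstacle is the middle step: reducing $\sum_{k<n}c(n,k)$ to the displayed closed form. The terms $\max(0,2(n\bmod 3^j)-3^j)$ encode the carries produced when doubling $n$, and matching them against the digitwise quantities requires care with the boundary cases — indices $j$ beyond the length of $n$, runs of $1$'s abutting the bottom of the expansion, and the digit sitting at the "pivot'' position itself. Once the formula is established, positivity and the equality analysis are immediate.
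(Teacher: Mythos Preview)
Your argument is correct and genuinely different from the paper's. I checked the key identity
\[
\nu_3(T(n))=\sum_{m:\,a_m=1}(n\bmod 3^m)\;-\;\sum_{m:\,a_m=2}u_m\bigl(3^m+2(n\bmod 3^m)\bigr)
\]
by carrying out the bookkeeping you describe: writing $\sum_{j\ge 1}\lfloor n/3^j\rfloor\,n_j=\tfrac12\sum_{p>q}a_pa_q(3^p-3^q)$ and combining with $\tfrac12 n(S_3(n)-1)$ yields $\sum_{q:a_q=2}3^q+\sum_q a_q n_q\,A_q$ with $A_q=\sum_{p>q}a_p$, which simplifies to $\sum_p a_p n_p$; on the other side, each $2$--position $m$ is hit by exactly the $u_m+1$ values $j=m+1,\dots,m+1+u_m$, so the $\max$--sum is $\sum_{m:a_m=2}(u_m+1)(3^m+2n_m)$, and the two pieces collapse to your formula. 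The equality analysis is then clean: each block inequality $n_p\ge 2\cdot 3^m+n_m>3^m+2n_m$ is strict, forcing every $u_m=0$, whence the residual $\sum_{a_p=1}n_p=0$ forces at most one digit $1$, sitting above a string of zeros.

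The paper proceeds quite differently. It never writes down a closed form for $\nu_3(T(n))$; instead it establishes structural recurrences for the increments $\mu_3(j)=2\bigl(\nu_3(T(j+1))-\nu_3(T(j))\bigr)$: a scaling law $\nu_3(T(3n))=3\nu_3(T(n))$ gives the zeros at $3^n$ and $2\cdot 3^n$; a translation identity $\mu_3(2\cdot 3^n+a)=\mu_3(a)$ handles the top digit $2$; and an antisymmetry $\mu_3(3^n+i)=-\mu_3(2\cdot 3^n-i+1)$ together with $\mu_3(3^n+a)=\mu_3(a)+2$ on the left half shows strict positivity on the open interval $(3^n,2\cdot 3^n)$, feeding an induction on the top digit. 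Your route trades this inductive, self-similar picture for a single explicit formula; the payoff is a one-line nonnegativity proof and a transparent description of the zero set, at the cost of the digit-sum algebra in the middle step, which you rightly flag as the place needing care. Both approaches ultimately rest on Legendre/Kummer, but yours extracts more: the closed form itself is a result the paper does not state.
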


We begin with some elementary results on the function $S_{3}$ 
which admit elementary proofs. 

\begin{Lem}
\label{3-rules}
Let $n \in \mathbb{N}$. Then 
\begin{equation}
S_{3}(3n) = S_{3}(n), \, S_{3}(3n+1) = 1 + S_{3}(n) \text{ and }
S_{3}(3n+2) = 2 + S_{3}(n).
\nonumber
\end{equation}
\end{Lem}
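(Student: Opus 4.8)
\textbf{Proof proposal for Lemma \ref{3-rules}.}

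The plan is to prove the three identities directly from the base-$3$ expansion of $n$, using only the definition (\ref{sum-base3}) of $S_3$. Write $n = a_0 + a_1\cdot 3 + \cdots + a_r\cdot 3^r$ with each $a_i\in\{0,1,2\}$, so that $S_3(n) = a_0 + a_1 + \cdots + a_r$.

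For the first identity, I would observe that $3n = 0 + a_0\cdot 3 + a_1\cdot 3^2 + \cdots + a_r\cdot 3^{r+1}$ is already a valid base-$3$ expansion: the digits are simply those of $n$ shifted up by one place, with a new units digit equal to $0$. Hence the digit sum is unchanged, $S_3(3n) = a_0 + a_1 + \cdots + a_r = S_3(n)$. For the second identity, $3n+1 = 1 + a_0\cdot 3 + a_1\cdot 3^2 + \cdots + a_r\cdot 3^{r+1}$; since $1\in\{0,1,2\}$ this is again a legitimate base-$3$ expansion, with units digit $1$ and higher digits those of $n$ shifted up, so $S_3(3n+1) = 1 + S_3(n)$. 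The third identity is identical in structure: $3n+2 = 2 + a_0\cdot 3 + \cdots + a_r\cdot 3^{r+1}$, with units digit $2\in\{0,1,2\}$, giving $S_3(3n+2) = 2 + S_3(n)$.

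There is essentially no obstacle here; the only point requiring the tiniest bit of care is the observation that appending a digit $d$ with $0\le d\le 2$ to a valid base-$3$ string (i.e. forming $3n+d$) produces another valid base-$3$ string, so that no carrying occurs and the digit-sum formula applies termwise. One can phrase this uniformly: for $d\in\{0,1,2\}$, the base-$3$ digits of $3n+d$ are $(d, a_0, a_1, \ldots, a_r)$, whence $S_3(3n+d) = d + S_3(n)$, and the three stated cases $d=0,1,2$ follow at once.
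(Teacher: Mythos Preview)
Your proof is correct and is precisely the standard elementary argument. The paper itself does not give a proof of this lemma; it simply introduces it (together with the next lemma) as ``elementary results on the function $S_{3}$ which admit elementary proofs,'' and your base-$3$ digit-shift argument is exactly the proof the authors are alluding to.
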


\begin{Lem}
\label{3-rules-a}
Let $n \in \mathbb{N}$. Then 
\begin{eqnarray}
\Sc{4 \cdot 3^n+b} & = & 2+\Sc{b} \text{ for all }  0 \le b < 2 \cdot 3^n, 
\nonumber \\
\Sc{2 \cdot 3^{n}+b} & = & 2+\Sc{b} \text{ for all } 0 \le b < 3^{n},  
\nonumber \\
\Sc{3^{n}+b-1} & = & 1+\Sc{b-1} \text{ for }1 \le b < 3^n.
\nonumber
\end{eqnarray}
\end{Lem}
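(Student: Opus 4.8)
The plan is to deduce all three identities from the elementary principle that base-$3$ digit sums are additive under carry-free addition: if $x,y\ge 0$ and, in every position $i$, the base-$3$ digits $d_i(x)$ and $d_i(y)$ satisfy $d_i(x)+d_i(y)\le 2$, then computing $x+y$ involves no carries and hence $S_3(x+y)=S_3(x)+S_3(y)$. First I would record the base-$3$ shapes of the three shifts that occur. Since $4=1\cdot 3+1$ we have $4\cdot 3^{n}=3^{n+1}+3^{n}$, whose only nonzero digits are a $1$ in position $n+1$ and a $1$ in position $n$; the number $2\cdot 3^{n}$ has the single nonzero digit $2$ in position $n$; and $3^{n}$ has the single digit $1$ in position $n$. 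In each case the quantity being added ($b$ or $b-1$) will be small enough that its digits cannot collide badly with these.

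The three cases are then routine. For the second identity, $0\le b<3^{n}$ means $b$ has no digit in any position $\ge n$, so $b$ and $2\cdot 3^{n}$ share no active position and $S_3(2\cdot 3^{n}+b)=2+S_3(b)$. The third identity is the same argument applied to $3^{n}$ and $c:=b-1$, which satisfies $0\le c<3^{n}-1<3^{n}$, giving $S_3(3^{n}+b-1)=1+S_3(b-1)$. For the first identity, $0\le b<2\cdot 3^{n}$ forces $b$ to have digit $0$ in position $n+1$ and digit $0$ or $1$ in position $n$; thus the digit sums to be added in positions $n+1$ and $n$ are $1+0$ and $1+(0\text{ or }1)\le 2$ respectively, and in every lower position the digit of $4\cdot 3^{n}$ is $0$. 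No carry occurs, so $S_3(4\cdot 3^{n}+b)=2+S_3(b)$.

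The one point that needs a moment of care, and the closest thing here to an obstacle, is this first identity: the range of $b$ is twice as wide as in the other two, so $b$ may carry a digit $1$ in position $n$, the same position as the lower $1$ of $4\cdot 3^{n}$. The saving observation is simply $1+1=2<3$, so even this collision does not propagate a carry. If one prefers not to reason about carries at all, an equivalent route is induction on $n$ via Lemma \ref{3-rules}: writing the added argument as $3m+e$ with $e\in\{0,1,2\}$ and peeling off the last base-$3$ digit reduces each of $S_3(4\cdot 3^{n+1}+b)$, $S_3(2\cdot 3^{n+1}+b)$ and $S_3(3^{n+1}+b-1)$ to the corresponding level-$n$ quantity, the $n=0$ instances being trivial (and vacuous for the third). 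Either way the lemma follows with no serious computation.
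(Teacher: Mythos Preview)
Your proof is correct. The paper does not actually prove this lemma: it introduces Lemmas~\ref{3-rules} and~\ref{3-rules-a} with the sentence ``We begin with some elementary results on the function $S_{3}$ which admit elementary proofs'' and then states both without argument. Your carry-free reasoning (including the one nontrivial check that in the first identity the position-$n$ digits sum to at most $1+1<3$) is exactly the kind of elementary proof the paper is gesturing at, so there is nothing to compare.
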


The next step in analyzing the function $\nu_{3} \circ T$ is to produce a 
recurrence for this valuation. The 
symmetry observed in Figure \ref{figure-2} is a consequence of this result.

\begin{Prop}
\label{thm-3symmetry}
Let $n \in \mathbb{N}$. Then $\nu_{3}(T(3n)) = 3 \nu_{3}(T(n))$. 
\end{Prop}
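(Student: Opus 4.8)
The plan is to work directly from the closed formula for the valuation in Corollary~\ref{p-val}, specialized to $p=3$:
\[
\nu_{3}(T(n)) = \frac{1}{2}\left( \sum_{j=0}^{n-1} S_{3}(n+j) - \sum_{j=0}^{n-1} S_{3}(3j+1) \right).
\]
The key observation is that in the identity $\nu_{3}(T(3n)) = 3\nu_{3}(T(n))$ the argument $3n$ makes both sums factor nicely modulo the base-$3$ digit rules of Lemma~\ref{3-rules}. First I would write $\nu_{3}(T(3n))$ using the Corollary with $n$ replaced by $3n$, so it involves $\sum_{j=0}^{3n-1} S_{3}(3n+j)$ and $\sum_{j=0}^{3n-1} S_{3}(3j+1)$. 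The second of these is the cleaner one: splitting $j$ over residues modulo $3$ and applying $S_{3}(3k)=S_{3}(k)$, $S_{3}(3k+1)=1+S_{3}(k)$, $S_{3}(3k+2)=2+S_{3}(k)$ (Lemma~\ref{3-rules}) collapses $\sum_{j=0}^{3n-1} S_{3}(3j+1)$ into $3\sum_{k=0}^{n-1} S_{3}(3k+1)$ plus an explicit correction term counting the added $1$'s and $2$'s.

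Next I would handle $\sum_{j=0}^{3n-1} S_{3}(3n+j)$ in the same spirit: split the index $j = 3m + t$ with $t \in \{0,1,2\}$ and $0 \le m \le n-1$, so that $3n + j = 3(n+m) + t$, and again apply Lemma~\ref{3-rules} to get $S_{3}(3(n+m)+t) = S_{3}(n+m) + t$ for $t \in \{0,1,2\}$. Summing over $t$ gives $3\,S_{3}(n+m) + 3$ for each $m$, so $\sum_{j=0}^{3n-1} S_{3}(3n+j) = 3\sum_{m=0}^{n-1} S_{3}(n+m) + 3n$. Combining the two computations, the correction constants (the $+3n$ from the first sum and the analogous constant from the second) must be shown to cancel exactly, leaving $\nu_{3}(T(3n)) = 3 \cdot \frac{1}{2}\left(\sum_{m=0}^{n-1} S_{3}(n+m) - \sum_{k=0}^{n-1} S_{3}(3k+1)\right) = 3\nu_{3}(T(n))$.

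The main obstacle I anticipate is the bookkeeping in the second sum $\sum_{j=0}^{3n-1} S_{3}(3j+1)$: writing $j = 3m+t$ gives $3j+1 = 9m + 3t + 1$, which is not directly of the form $3k+r$ with $k = 3m$, so one more application of the digit rules is needed, and the arithmetic of the resulting constant (it should come out to $3\sum_{k=0}^{n-1}S_3(3k+1)$ plus precisely $3n$, matching the constant from the other sum) must be tracked carefully so that everything cancels. An alternative, possibly cleaner route avoids the closed form entirely: use Proposition~\ref{prop-valuetn}, which expresses $\nu_{3}(T(n))$ as $\frac12\sum_{j=1}^{n-1}\bigl(S_{3}(2j)+S_{3}(2j+1)-S_{3}(3j+1)-S_{3}(j)\bigr)$, and show the summand at index $3j$, $3j+1$, $3j+2$ sums (over the three residues) to three times the summand at index $j$ plus a telescoping or vanishing remainder — but the first approach is more transparent, so I would present that one and relegate the digit-rule verifications to short inline computations citing Lemma~\ref{3-rules} and Lemma~\ref{3-rules-a}.
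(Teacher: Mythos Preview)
Your proposal is correct and follows essentially the same route as the paper: both start from the digit-sum formula of Corollary~\ref{p-val}, split each of $\sum_{j=0}^{3n-1}S_3(3n+j)$ and $\sum_{j=0}^{3n-1}S_3(3j+1)$ according to the residue of $j$ modulo $3$, apply Lemma~\ref{3-rules}, and check that the resulting constants (both $+3n$, exactly as you predicted) cancel. The only cosmetic difference is that the paper first peels off $S_3(3j+1)=1+S_3(j)$ before splitting the second sum, whereas you split first and apply the digit rules twice; the arithmetic is identical.
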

\begin{proof}
Legendre's formula (\ref{p-val}) shows that the result is equivalent to 
\begin{equation}
\sum_{j=0}^{3n-1} S_{3}(3n+j) - \sum_{j=0}^{3n-1} S_{3}(3j+1) 
- 3 \sum_{j=0}^{n-1} S_{3}(n+j) + 3 \sum_{j=0}^{n-1} S_{3}(3j+1) = 0.
\label{mess}
\end{equation}
\noindent
Each term of (\ref{mess}) is now simplified. Lemma \ref{3-rules} shows that
\begin{eqnarray}
\sum_{j=0}^{3n-1} S_{3}(3n+j) & = & 
\sum_{j=0}^{n-1} S_{3}(3(n+j))  + 
\sum_{j=0}^{n-1} S_{3}(3(n+j)+1)  + 
\sum_{j=0}^{n-1} S_{3}(3(n+j)+2)  \nonumber \\
& = & 3n + 3 \sum_{j=0}^{n-1} S_{3}(n+j), \nonumber 
\end{eqnarray}
\noindent
and
\begin{eqnarray}
\sum_{j=0}^{3n-1} S_{3}(3j+1) & = & 3n + \sum_{j=0}^{3n-1} S_{3}(j) \nonumber \\
& = & 3n + \sum_{j=0}^{n-1} S_{3}(3j) + 
\sum_{j=0}^{n-1} S_{3}(3j+1) + 
\sum_{j=0}^{n-1} S_{3}(3j+2)  \nonumber \\
& = & 6n + 3 \sum_{j=0}^{n-1} S_{3}(j), \nonumber 
\end{eqnarray}
\noindent
and, finally,
\begin{eqnarray}
\sum_{j=0}^{n-1} S_{3}(3j+1) & = & n + \sum_{j=0}^{n-1} S_{3}(j).
\nonumber
\end{eqnarray}
\noindent
These identities show that the left-hand side of  (\ref{mess}) vanishes. 
\end{proof}

\begin{Cor}
\label{end-cases}
For each $n \in \mathbb{N}$, we have $\nuthree{3^n}=\nuthree{2 \cdot
3^{n}} = 0.$
\end{Cor}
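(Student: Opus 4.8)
The statement is an immediate consequence of the multiplicative identity in Proposition \ref{thm-3symmetry}, applied iteratively, once the two base cases are checked. The plan is: first record that $\nu_3(T(1)) = 0$ and $\nu_3(T(2)) = 0$; then induct on $n$ using $\nu_3(T(3m)) = 3\nu_3(T(m))$ with $m = 3^{n-1}$ and with $m = 2\cdot 3^{n-1}$.

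\textbf{Base cases.} From \eqref{T-seq}, $T(1) = 1$, so $\nu_3(T(1)) = 0$. For $T(2)$ one computes $T(2) = \dfrac{1!\,4!}{2!\,3!} = \dfrac{24}{12} = 2$, hence $\nu_3(T(2)) = 0$. (Alternatively, apply Corollary \ref{p-val} or Proposition \ref{prop-valuetn} directly: for $n = 2$ the single summand vanishes after reducing base-$3$ digit sums.) These handle $n = 0$ as well, if $0 \in \mathbb{N}$.

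\textbf{Induction.} Suppose $\nu_3(T(3^{n-1})) = 0$. Taking $m = 3^{n-1}$ in Proposition \ref{thm-3symmetry} gives $\nu_3(T(3^{n})) = 3\nu_3(T(3^{n-1})) = 0$. Likewise, assuming $\nu_3(T(2\cdot 3^{n-1})) = 0$ and taking $m = 2\cdot 3^{n-1}$ gives $\nu_3(T(2\cdot 3^{n})) = 3\nu_3(T(2\cdot 3^{n-1})) = 0$. Both inductions close, proving the claim for all $n$.

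\textbf{Main obstacle.} There is essentially none beyond the two elementary base-case evaluations; the entire content has already been extracted into Proposition \ref{thm-3symmetry}. The only point requiring a word of care is the convention for $\mathbb{N}$: if $0$ is excluded, the base cases $n=0$ above are simply ignored and the induction starts from $n = 1$ with the same computations of $T(1)$ and $T(2)$.
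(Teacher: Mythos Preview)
Your argument is correct and matches the paper's own proof exactly: check the base values $T(1)$ and $T(2)$, then induct via Proposition~\ref{thm-3symmetry}. In fact your computation $T(2)=2$ is correct and sharpens the paper's sloppy ``$T(2)=1$'' (which is a typo, though harmless for the $3$-adic conclusion).
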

\begin{proof}
This follows directly from $T(1) = 1$ and $T(2) = 1$ and Proposition
\ref{thm-3symmetry}. 
\end{proof}

For brevity, introduce the function
\begin{equation}
\muc{j} := S_3(2j)+S_3(2j+1)-S_3(3j+1)-S_3(j).
\end{equation}
\noindent
Thus Proposition \ref{prop-valuetn} takes the form
\begin{equation}
\nu_{3}(T(n)) = \tfrac{1}{2}  \sum_{j=1}^{n-1} \muc{j}. 
\label{nu3-tn}
\end{equation}
\noindent
Observe that 
\begin{equation}
\muc{n-1} = 2 \left( \nu_{3}(T(n)) - \nu_{3}(T(n-1)) \right).
\end{equation}

\begin{Prop}
\label{zeros-1}
If $0 \leq a \leq 3^n$ then $\nuthree{a}=\nuthree{2 \cdot 3^n+a}$.
\end{Prop}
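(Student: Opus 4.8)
The plan is to prove $\nu_3(T(2\cdot 3^n + a)) = \nu_3(T(a))$ for $0 \le a \le 3^n$ by comparing discrete derivatives, exactly as in the proof of Proposition \ref{prop-shift}. Using \eqref{nu3-tn}, it suffices to show $\muc{2\cdot 3^n + j - 1} = \muc{j-1}$ for all $j$ in the relevant range, together with the base point $\nu_3(T(2\cdot 3^n)) = 0 = \nu_3(T(0))$ supplied by Corollary \ref{end-cases} (reading $T(0)$ formally, or instead starting the telescoping sum at $a=1$ and checking $a=1$ separately via $\nuthree{2\cdot 3^n+1} = \nuthree{2\cdot 3^n} + \tfrac12\muc{2\cdot 3^n}$). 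So the whole problem reduces to a statement about base-$3$ digit sums: for $0 \le j < 3^n$ (roughly — I will need to track the exact upper bound carefully),
\begin{equation}
S_3(2(2\cdot 3^n + j)) - S_3(2j),\quad S_3(2(2\cdot 3^n+j)+1) - S_3(2j+1),\quad S_3(3(2\cdot 3^n+j)+1) - S_3(3j+1),\quad S_3(2\cdot 3^n + j) - S_3(j)
\end{equation}
should combine so that $\muc{2\cdot 3^n+j} = \muc{j}$.

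The key computational step is to pin down each of the four shifted digit sums. Note $2(2\cdot 3^n + j) = 4\cdot 3^n + 2j$ and $3(2\cdot 3^n+j)+1 = 2\cdot 3^{n+1} + (3j+1)$, so Lemma \ref{3-rules-a} applies almost directly: its first line gives $S_3(4\cdot 3^n + b) = 2 + S_3(b)$ whenever $0 \le b < 2\cdot 3^n$, and its second line (shifted up by one) gives $S_3(2\cdot 3^{n+1} + b) = 2 + S_3(b)$ whenever $0 \le b < 3^{n+1}$; likewise $S_3(2\cdot 3^n + b) = 2 + S_3(b)$ for $0 \le b < 3^n$. Thus, in the regime where all the arguments stay below the required thresholds, each of the four terms simply picks up an additive constant ($+2$, $+2$, $+2$, $+2$ respectively), and these cancel in the alternating combination defining $\muc{\cdot}$, giving $\muc{2\cdot 3^n+j} = \muc{j}$ as desired.

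The main obstacle is boundary bookkeeping: verifying that for $0 \le a \le 3^n$ the arguments $2j+1$, $3j+1$, etc. (with $j$ running up to $a-1 \le 3^n - 1$) really do stay in the ranges where Lemma \ref{3-rules-a} applies. Here $2j$ and $2j+1$ can be as large as $2\cdot 3^n - 1$, which is exactly on the edge of the "$b < 2\cdot 3^n$" hypothesis of the first identity — so that is fine — while $3j+1$ can reach $3^{n+1}-2 < 3^{n+1}$, fitting the second identity, and $j \le 3^n - 1 < 3^n$ fits the third. So in fact all four terms are covered throughout $0 \le j \le 3^n - 1$, and no case split is needed (in contrast to the $p=2$ situation). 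I would still double-check the single endpoint $a = 3^n$: at $j = 3^n$ one has $2j = 2\cdot 3^n$, no longer strictly below $2\cdot 3^n$, but this index only enters as an upper summation limit, not as a summand, so it causes no trouble; equivalently, one can verify $\nuthree{2\cdot 3^n + 3^n} = \nuthree{3^{n+1}} = 0 = \nuthree{3^n}$ directly from Corollary \ref{end-cases}, which is consistent. After assembling these observations the identity $\muc{2\cdot 3^n + j} = \muc{j}$ holds for every $j$ in range, and summing via \eqref{nu3-tn} completes the proof.
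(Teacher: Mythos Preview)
Your proposal is correct and follows essentially the same route as the paper: both arguments reduce the claim to the identity $\muc{2\cdot 3^{n}+j}=\muc{j}$ for the relevant range of $j$, derive that identity term-by-term from Lemma~\ref{3-rules-a}, and then telescope via \eqref{nu3-tn} using Corollary~\ref{end-cases} as the base point. Your write-up simply makes explicit the range checks that the paper compresses into the phrase ``direct consequence of Lemma~\ref{3-rules-a}.''
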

\begin{proof}
The limiting cases $a=0$ and $a = 3^{n}$ follow from Corollary 
\ref{end-cases}. The result follows from (\ref{nu3-tn}) and the
identities
$\muc{a}=\muc{2 \cdot 3^n+a}$ for $ 1 \le a \le 3^n,$ that are direct
consequence of Lemma \ref{3-rules-a}.
\end{proof}

\noindent
The proof of Theorem \ref{val3-zero} is presented next. 

\begin{proof}
Consider the representation of 
$n \in \mathbb{N}$ in base $3$: 
\begin{equation}
n = a_{0} + 3a_{1} + 3^{2} a_{2} + \cdots + 3^{r}a_{r}.
\end{equation}
\noindent
Corollary \ref{end-cases} and Proposition \ref{zeros-1} show 
that the numbers $n$ with the form stated in 
the theorem satisfy 
$\nu_{3}(T(n)) = 0$. We need to prove that these are the only zeros of 
$\nu_{3} \circ T$. 

The proof is by induction and show that 
$\nuthree{a} > 0$ for $3^n < a < 3^{n+1}$. 
Proposition \ref{zeros-1} shows that, if $a_{r}=2$, then $\nu_{3}(T(n)) > 0$. 
Proposition \ref{prop-case1} treats the result for $a_{r}=1$ and the 
first half of these numbers $0 \leq a-3^{r} \leq 3^{r}$. Proposition 
\ref{prop-case2}  establishes a symmetry result that takes care of the 
second half.  \\
\end{proof}

\medskip

We now establish the symmetry of the function $\nu_{3} \circ T$. The 
proof begin with some auxiliary steps. \\

\begin{Prop}
\label{prop-case1}
Let $n, \, a  \in \mathbb{N}$ and assume $1 \leq a < 3^{n}$. Then 
\begin{equation}
\muc{3^n+a}=\left\{
\begin{array}{lll}
\muc{a}+2	&	\text{if } 1 \le a < \tfrac{1}{2}3^n;  \nonumber \\
\muc{a}		&	\text{if } a = \tfrac{1}{2}(3^n+1); \nonumber  \\
\muc{a}-2	&	\text{if } \tfrac{1}{2}3^n+1 < a \le 3^n. \nonumber
\end{array}
\right.
\end{equation}
\end{Prop}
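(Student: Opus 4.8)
The plan is to compute $\muc{3^n+a}$ directly from the definition
$\muc{j} = S_3(2j)+S_3(2j+1)-S_3(3j+1)-S_3(j)$ by tracking, for each of the four terms, how adding the ``head'' $3^n$ to the argument $a$ changes the base-$3$ digit sum. For the first term we must compare $S_3(2 \cdot 3^n + 2a)$ with $S_3(2a)$; for the second, $S_3(2 \cdot 3^n + 2a+1)$ with $S_3(2a+1)$; for the third, $S_3(3 \cdot 3^n + 3a+1) = S_3(3^{n+1}+3a+1)$ with $S_3(3a+1)$; for the fourth, $S_3(3^n+a)$ with $S_3(a)$. The key observation is that whether a carry is generated depends only on the size of the perturbed argument relative to powers of $3$, which Lemma~\ref{3-rules-a} handles cleanly in the regime $0 \le b < 3^n$ (or $< 2 \cdot 3^n$).

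First I would dispose of the fourth term: since $1 \le a < 3^n$ we may apply the third identity of Lemma~\ref{3-rules-a} (with $b-1=a$, valid when $1 \le a+1 < 3^n$; the edge case $a = 3^n-1$ is checked by hand) to get $S_3(3^n+a) = 1 + S_3(a)$, a contribution of $-1$ independent of which subcase we are in. For the third term, $3a+1$ ranges over roughly $[3, 3^{n+1})$, and $S_3(3^{n+1}+(3a+1)) = 1 + S_3(3a+1)$ provided $3a+1 < 3^{n+1}$, i.e. always in our range — again a fixed contribution. The real case split comes from the first two terms: $2a$ (resp. $2a+1$) lies below $3^n$, between $3^n$ and $2\cdot 3^n$, or between $2\cdot 3^n$ and $3^{n+1}$ according as $a < \tfrac12 3^n$, $\tfrac12 3^n < a < 3^n$, etc., and Lemma~\ref{3-rules-a} gives an extra $+2$, or the ``$4\cdot 3^n$'' rule gives $+2$, or intermediate behavior. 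Assembling: when $1 \le a < \tfrac12 3^n$ both $2a, 2a+1 < 3^n$, so each of the first two terms picks up... the bookkeeping yields a net $+2$; the boundary value $a = \tfrac12(3^n+1)$ (only an integer when $n$ is even, where $3^n$ is odd — note $\tfrac12 3^n$ is not an integer, so the ``middle'' case is genuinely a single value) must be treated separately because $2a = 3^n+1$ straddles a power of $3$ in a way not covered by the clean ranges; and for $\tfrac12 3^n + 1 < a \le 3^n$ one gets the symmetric $-2$.

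The main obstacle I anticipate is the boundary arithmetic around $a \approx \tfrac12 3^n$: because $3^n$ is odd, $2a$ equals $3^n$ exactly never, but $2a = 3^n \pm 1$ and $2a+1 = 3^n, 3^n+2$ are exactly the arguments where Lemma~\ref{3-rules-a}'s hypotheses ($b < 3^n$ strictly, or $b < 2\cdot 3^n$) are tight or just fail, so those few values need the base-$3$ expansion written out explicitly (e.g. $\tfrac12(3^n+1) = 1 + 3 + 3^2 + \cdots + 3^{n-1} \cdot 0 + \ldots$, the repunit-type pattern) to see the carries. Once the three exceptional arguments near the midpoint are checked by direct digit computation, the remaining ranges follow uniformly from Lemma~\ref{3-rules} and Lemma~\ref{3-rules-a}, and the three claimed values $\muc{a}+2$, $\muc{a}$, $\muc{a}-2$ drop out by adding the per-term contributions. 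I would also double-check consistency against Proposition~\ref{zeros-1} and the already-known identity $\muc{n-1} = 2(\nu_3(T(n)) - \nu_3(T(n-1)))$ as a sanity check on signs.
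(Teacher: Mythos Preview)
Your approach is essentially the same as the paper's: both compute $\mu_3(3^n+a)-\mu_3(a)$ term by term via Lemma~\ref{3-rules-a}, with the case split coming from whether $2a$ and $2a+1$ lie below or above $3^n$. The paper's own proof is in fact just a one-line sketch (``the first part follows from Lemma~\ref{3-rules-a}; the other parts can be proved similarly''), so your more detailed bookkeeping and your explicit flagging of the delicate boundary values near $a=\tfrac12 3^n$ go beyond what the paper writes out.
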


\begin{proof}
When $1 \le b < \tfrac{1}{2}3^n$,
the first part  follows from Lemma \ref{3-rules-a}. 
The other parts can be proved similarly, and thus omitted.
\end{proof}

\begin{Lem}
If $3 \nmid a$,  $3 \nmid b$, $n<m$, and $b < 3^{m-n}$, then
\begin{equation}
\nuthree{3^m a - 3^n b} = 2(m-n)+\nuthree{a}-\nuthree{b}.
\end{equation}
\end{Lem}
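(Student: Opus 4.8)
\medskip
\noindent\textbf{Proof strategy (proposal).}
The plan is to strip off the common power of $3$ and then settle the remaining case by a base-$3$ digit computation modelled on the proof of Proposition~\ref{prop-case1}. Since $n<m$, for $n\ge 1$ one has $3^{m}a-3^{n}b=3\bigl(3^{m-1}a-3^{n-1}b\bigr)$, and the inner number satisfies the hypotheses of the lemma with $(m,n)$ replaced by $(m-1,n-1)$; by Proposition~\ref{thm-3symmetry} we have $\nuthree{3k}=3\,\nu_{3}(T(k))$, so $n$ successive applications reduce the statement to the case $n=0$. It therefore suffices to analyze $\nuthree{3^{k}a-b}$ for $k=m-n\ge 1$, $3\nmid a$, $3\nmid b$ and $0<b<3^{k}$, and then carry the factor produced by the reduction back through; this last bit of bookkeeping is where the form $2(m-n)$ (rather than $2m$) appears.

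For the reduced statement, use the summation formula~(\ref{nu3-tn}) and split the range at the block boundary $3^{k}a$:
\begin{align*}
\nuthree{3^{k}a-b}
&=\tfrac12\sum_{j=1}^{3^{k}a-1}\muc{j}\;-\;\tfrac12\sum_{i=0}^{b-1}\muc{3^{k}a-1-i}\\
&=3^{k}\,\nuthree{a}\;-\;\tfrac12\sum_{i=0}^{b-1}\muc{3^{k}a-1-i},
\end{align*}
where the first sum is evaluated by iterating the grouping identity $\muc{3j}+\muc{3j+1}+\muc{3j+2}=3\,\muc{j}$ (immediate from Lemma~\ref{3-rules}), which at the same time recovers $\nuthree{3^{k}a}=3^{k}\nuthree{a}$. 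In the second (defect) sum every index equals $3^{k}(a-1)+t$ with $t=3^{k}-1-i$ lying in $[\,3^{k}-b,\,3^{k}-1\,]\subset[0,3^{k})$, so the base-$3$ digits of $3^{k}(a-1)+t$ split as those of $a-1$ followed by those of $t$; Lemma~\ref{3-rules-a} together with Proposition~\ref{prop-case1} then rewrites $\muc{3^{k}(a-1)+t}$ as $\muc{t}$ plus an $i$-independent shift. Summing over $i$, the $\muc{t}$-contribution is, by Proposition~\ref{zeros-1} and Corollary~\ref{end-cases}, a reflected copy of $\sum_{j=1}^{b-1}\muc{j}=2\,\nuthree{b}$, while the shifts accumulate into the linear term; assembling the two pieces gives $\nuthree{3^{k}a-b}=\nuthree{a}-\nuthree{b}+(\text{linear in }k)$, which after the reduction is the asserted identity.

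The real content — and the step I expect to be the main obstacle — is the carry analysis hidden in that rewriting. In
\begin{multline*}
\muc{3^{k}(a-1)+t}=\Sc{2(3^{k}(a-1)+t)}+\Sc{2(3^{k}(a-1)+t)+1}\\
-\Sc{3(3^{k}(a-1)+t)+1}-\Sc{3^{k}(a-1)+t},
\end{multline*}
tripling never carries into the $3^{k}$-place (since $3t+1<3^{k+1}$), but doubling the low part $t$ carries precisely when $t\ge\tfrac12 3^{k}$; as $t$ ranges over $[3^{k}-b,3^{k}-1]$ this is controlled by the size of $b$, and one is driven into the same trichotomy $t<\tfrac12 3^{k}$, $t=\tfrac12(3^{k}+1)$, $t>\tfrac12 3^{k}+1$ as in Proposition~\ref{prop-case1}. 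Pinning down the shift in each regime, and checking that the regimes splice consistently across $0<b<3^{k}$ — which is exactly where the hypothesis $b<3^{m-n}$ is used, namely to keep the defect window inside the single block $[3^{k}(a-1),3^{k}a)$ — is the crux; the reduction and the constant-tracking are routine by comparison.
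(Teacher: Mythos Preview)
First, a framing remark: the paper states this lemma \emph{without proof}. There is nothing to compare your argument to on the paper's side.

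More importantly, the identity as printed is false, so no proof can succeed. Take $a=b=1$, $m=1$, $n=0$: all hypotheses hold, the left side is $\nu_{3}(T(2))=0$ (this value is listed in the paper's own data), while the right side is $2(1-0)+0-0=2$. A few more samples (e.g.\ $a=2,b=1,m=2,n=0$ gives $\nu_{3}(T(17))=1$ versus $4$) confirm it. What the authors almost certainly intended is the digit-sum identity
\[
S_{3}(3^{m}a-3^{n}b)=2(m-n)+S_{3}(a)-S_{3}(b),
\]
which \emph{is} correct under the stated hypotheses and has a one-line proof: $3^{m}a-3^{n}b=3^{n}\bigl(3^{m-n}(a-1)+(3^{m-n}-b)\bigr)$, the bracketed factor has base-$3$ expansion consisting of the digits of $a-1$ followed by the $(m-n)$ digits of $3^{m-n}-b$, and $S_{3}(a-1)=S_{3}(a)-1$, $S_{3}(3^{m-n}-b)=2(m-n)-S_{3}(b)+1$ since $3\nmid a,b$. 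The macro $\nu_{3}(T(\cdot))$ in the displayed formula is a typo for $S_{3}(\cdot)$.

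Your proposal runs into this head-on, and the inconsistency is visible inside the argument itself. You invoke $\nu_{3}(T(3k))=3\,\nu_{3}(T(k))$ to peel off the common factor $3^{n}$; but that identity \emph{triples} the valuation, whereas the target formula is invariant under $(m,n)\mapsto(m-1,n-1)$. So the ``bookkeeping'' you flag cannot be carried out: the reduction step alone already contradicts the statement whenever the right-hand side is nonzero. The same problem recurs in your main computation: you (correctly) obtain a leading term $3^{k}\,\nu_{3}(T(a))$, which is exponential in $k$, and then assert the final answer is $\nu_{3}(T(a))-\nu_{3}(T(b))+(\text{linear in }k)$. That passage is where the argument breaks, not the carry analysis you identify as the crux.

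In short: there is no gap to patch --- the target is wrong. If you reinterpret the lemma with $S_{3}$ in place of $\nu_{3}\circ T$, the machinery of Propositions~\ref{thm-3symmetry}--\ref{prop-case1} is overkill; a direct base-$3$ digit count as above does the job.
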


\begin{Prop}
\label{prop-case2}
If $1 \le i < \frac{3^n}{2}$, $\muc{3^n+i}= - \muc{2 \cdot 3^n-i+1}$.
\end{Prop}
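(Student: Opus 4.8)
The plan is to reduce the asserted identity to a finite check on sums of base-$3$ digits. Write $j_{1}=3^{n}+i$ and $j_{2}=2\cdot 3^{n}-i+1$, so that the claim is $\muc{j_{1}}+\muc{j_{2}}=0$. Expanding with the definition $\muc{j}=S_{3}(2j)+S_{3}(2j+1)-S_{3}(3j+1)-S_{3}(j)$ turns this into a linear relation among eight values of $S_{3}$. The point is that $j_{1}+j_{2}$ is one less than a power of $3$ (up to the normalization in the statement), so that $j_{2}$, $2j_{2}$, $2j_{2}+1$, $3j_{2}+1$ are, after subtracting the appropriate power of $3$, exactly the complements $3^{m}-1-x$ of the residues $x$ appearing in $j_{1}$, $2j_{1}$, $2j_{1}+1$, $3j_{1}+1$. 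I will therefore evaluate every term by two standard moves: first strip the top base-$3$ digits using Lemma~\ref{3-rules-a} (and Lemma~\ref{3-rules} for the $3j+1$ terms), and then, for the four $j_{2}$-terms, apply the digit-complement identity $S_{3}(3^{m}-1-x)=2m-S_{3}(x)$, valid for $0\le x\le 3^{m}-1$ because then $3^{m}-1-x$ has digits $2-a_{k}$ with no borrow.

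Carrying this out, for $1\le i<3^{n}/2$ one has $2i<3^{n}$, $2i+1<3^{n}$ and $3i+1<3^{n+1}$, so Lemmas~\ref{3-rules} and~\ref{3-rules-a} give $S_{3}(2j_{1})=2+S_{3}(2i)$, $S_{3}(2j_{1}+1)=2+S_{3}(2i+1)$, $S_{3}(3j_{1}+1)=1+S_{3}(3i+1)=2+S_{3}(i)$ and $S_{3}(j_{1})=1+S_{3}(i)$; and the corresponding $j_{2}$-terms reduce, via the complement identity, to constants in $n$ minus $S_{3}(2i)$, $S_{3}(2i+1)$, $S_{3}(i)$ respectively. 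Grouping $\muc{j_{1}}+\muc{j_{2}}$ into the four pairs $\{2j_{1},2j_{2}\}$, $\{2j_{1}+1,2j_{2}+1\}$, $\{3j_{1}+1,3j_{2}+1\}$, $\{j_{1},j_{2}\}$, the $i$-dependent quantities $S_{3}(2i)$, $S_{3}(2i+1)$ and $S_{3}(i)$ cancel pairwise, each pair leaves a constant depending only on $n$, and a final arithmetic check shows that the four constants add up to $0$.

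The main effort is the bookkeeping in these reductions: each of the eight terms carries one or two boundary subcases (the residue may sit at the top of a digit block, the top digit of a large argument must be confirmed before it is stripped, and one must track whether $3i+1$ carries into position $n$), so the proof breaks into a handful of short cases in the style of Lemma~\ref{lemma-aux1}. One value of $i$ also needs separate handling, namely $i=(3^{n}-1)/2$, where $j_{1}=j_{2}$; there the claim reduces to $\muc{(3^{n+1}-1)/2}=0$, which is immediate because $(3^{n+1}-1)/2$ is the base-$3$ repunit $\underbrace{1\cdots1}_{n+1}$, whence $S_{3}$ of it, of its double, of $3^{n+1}$, and of $3\cdot\frac{3^{n+1}-1}{2}+1$ are $n+1$, $2(n+1)$, $1$ and $n+2$. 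A shorter route I would also attempt is to bypass the digit computation entirely: writing $j_{2}=3^{n}+(3^{n}-i+1)$, Proposition~\ref{prop-case1} applies to both $\muc{3^{n}+i}$ (with $i$ in its first range) and to $\muc{j_{2}}$ (with $3^{n}-i+1$ in its last range); the $\pm2$ corrections cancel and the statement collapses to $\muc{i}=-\muc{3^{n}-i+1}$, the same assertion one level lower, giving an induction on $n$ with a trivial base case, provided one uses Proposition~\ref{zeros-1} to cover the small values of $i$.
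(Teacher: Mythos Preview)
Your plan diverges from the paper's: after stripping the leading base-$3$ digits, the paper makes no attempt at a pairwise complement cancellation but instead splits into the three residue classes $i\equiv 0,1,2\pmod 3$, writes $i=3^{a}x$ with $3\nmid x$ in the first case, and evaluates $\mu_3(A)$ and $\mu_3(B)$ separately by hand in each case until they are visibly negatives of one another. Your route, if it worked, would be shorter and more conceptual.

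The core step, however, does not go through as written. With $j_1+j_2=3^{n+1}+1$, the pair-sums $2j_1+2j_2$, $(2j_1{+}1)+(2j_2{+}1)$ and $j_1+j_2$ equal $2\cdot 3^{n+1}+2$, $2\cdot 3^{n+1}+4$ and $3^{n+1}+1$; none has the shape $3^m-1$ (or $2\cdot 3^m-1$), so the identity $S_3(3^m-1-x)=2m-S_3(x)$ does not turn $S_3(2j_2)$ into a constant minus $S_3(2i)$. Concretely, $2j_2=3^{n+1}+(3^n-2i+2)$ and the $3^n$-complement of that residue is $2i-3$, not $2i$; after your reductions the $i$-dependence does not cancel. (The paper's own computation in fact uses the shifted arguments $2A{-}1,\,2A{-}2,\,3A{-}2,\,A{-}1$; with that shift the cross-pair sums $(2A{-}1)+(2B{-}2)=2\cdot 3^{n+1}-1$, $(3A{-}2)+(3B{-}2)=3^{n+2}-1$ and $(A{-}1)+(B{-}1)=3^{n+1}-1$ \emph{are} all of complement type, and then your idea yields a clean proof without any $i\bmod 3$ split.) Two smaller slips: $j_1=j_2$ would require $i=(3^n+1)/2$, which lies outside the range, not $i=(3^n-1)/2$; and your inductive shortcut via Proposition~\ref{prop-case1} collapses the claim to $\mu_3(i)=-\mu_3(3^n-i+1)$ on $1\le i<3^n/2$, which is not literally the proposition at level $n-1$, so this auxiliary identity must be carried alongside the proposition through the induction rather than invoked as an already-known instance.
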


\begin{proof}
Let $A=3^n+i$ and $B=2 \cdot 3^n - i + 1$. We prove 
 $\muc{A}= - \muc{B}$.

First we observe that 
\begin{eqnarray*}
\muc{A}	
&=& \Sc{2 \cdot 3^n + 2i-1}+\Sc{2 \cdot 3^n + 2i-2}-\Sc{3^{n+1} + 2i-2}-\Sc{3^n + i-1}	\\
&=& (2+\Sc{2i-1}) + (2 + \Sc{2i-2}) - (1+\Sc{3i-2}) - (1+\Sc{i-1} \\
&=& \Sc{2i-1} + \Sc{2i-2} - \Sc{3i-2} - \Sc{i-1} + 2.
\end{eqnarray*}

There are three cases to consider according to the value of $i$ modulo $3$. 
Assume first that $i \equiv 0 \bmod 3$ and write 
$i = 3^a x$, where $a>0$ and $3 \nmid x$. Then 

	\begin{eqnarray*}
	\muc{A}		
	&=& \Sc{2i-1} + \Sc{2i-2} - \Sc{3i-2} - \Sc{i-1} + 2 \\
	&=& \Sc{2 \cdot 3^a x-1} + \Sc{2 \cdot 3^a x-2} - 
                         \Sc{3 \cdot 3^a x-2} - \Sc{3^a x-1} + 2 \\
	&=& (\Sc{2x} -1 + 2a) + (\Sc{2x} -2 + 2a) -  \\
         &  &          (\Sc{x}-2+2(a+1)) - (\Sc{x} - 1 + 2a) + 2 \\
	&=& 2\Sc{2x} - 2\Sc{x}
	\end{eqnarray*}
		
	\begin{eqnarray*}
	\muc{B}	
	&=& \Sc{4 \cdot 3^n -2i+1} + \Sc{4 \cdot 3^n -2i} - 
             \Sc{2 \cdot 3^{n+1} -3i+1} - \Sc{2 \cdot 3^n -i} \\
	&=& \Sc{4 \cdot 3^n -2\cdot 3^a x+1} + 
                             \Sc{4 \cdot 3^n -2\cdot 3^a x} \\
	& &- \Sc{2 \cdot 3^{n+1} -2\cdot 3^{a+1} x+1} - 
                 \Sc{2 \cdot 3^n - 3^a x} \\
	&=& (2n+2-\Sc{2\cdot 3^a x-1}) + (2(n-a)+2-\Sc{2x}) \\
	& &	- (2n+4-\Sc{2\cdot 3^{a+1} x+1}) - (2(n-a)+2-\Sc{x}) \\
	&=& (-\Sc{2x}+1)+ (-\Sc{2 x}) 
		- (-\Sc{2 x}-1) - (-\Sc{x})-2	\\
	&=& -2\Sc{2x} + 2\Sc{x} = -\muc{A},	
	\end{eqnarray*}
\noindent
as claimed. The cases $i \equiv 1, \, 2 \bmod 3$ are analyzed by similar 
techniques. 
\end{proof}

\medskip

\noindent
{\bf Note}. The techniques outlined in this paper can be used to present
a complete description of the function
$\nu_{p}(T(n))$ for $p \geq 5$ prime. We limit ourselves to showing 
the graphs for $p=5$ and $7$ in the range 
$n \leq 5000$. 

\medskip

{{
\begin{figure}[ht]
\begin{center}
\centerline{\epsfig{file=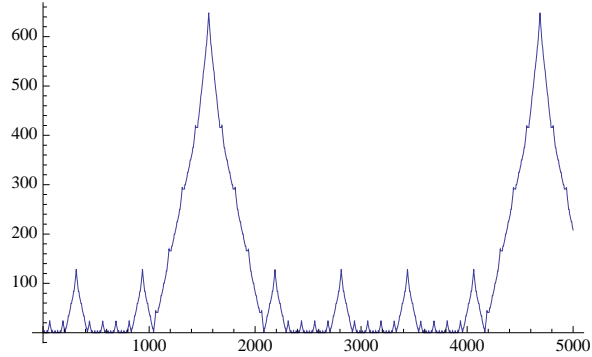,width=20em,angle=0}}
\caption{The $5$-adic valuation of $T(n)$}
\label{figure-5prime}
\end{center}
\end{figure}
}}

\medskip

{{
\begin{figure}[ht]
\begin{center}
\centerline{\epsfig{file=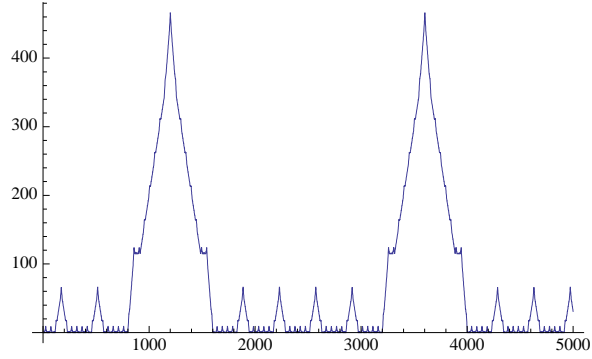,width=20em,angle=0}}
\caption{The $7$-adic valuation of $T(n)$}
\label{figure-7prime}
\end{center}
\end{figure}
}}

\medskip

The rest of the section is devoted to develop an efficient procedure to 
compute $\nu_{3}(T(n))$. We begin with the ternary expansion of $n$
\begin{equation}
n = \sum_{i=0}^{k} a_{i}3^{i},
\end{equation}
\noindent
and now define two sequence of integers: 

\begin{equation}
n_{k}  = n_{k}' = n,
\end{equation}
\noindent
and, for $0 \leq j < k$ and assume having
\begin{equation}
n'_{j+1}=\sum^{j+1}_{i=0}b_{j+1,i} 3^i, 
\end{equation}
\noindent
then define recursively
\begin{eqnarray*}
n_j  &=& \sum^{j}_{i=0}b_{j+1,i} 3^i,	\\
n'_j &=& \left\{
\begin{array}{lll}
n_{j}		& \text{if } b_{j+1,j+1} = 0,2;	\\
\min(n_j, 3^{j+1} - n_{j})	& \text{if } b_{j+1,j+1} = 1.
\end{array}
\right.
\end{eqnarray*}

\begin{Thm}
\label{thm-3adic}
The $3$-adic valuation of $T(n)$ satisfies
\begin{equation}
\nuthree{n_j} = \left\{
\begin{array}{lll}
\nuthree{n'_{j-1}}		& \text{if } a_j = 0,2;	\\
\nuthree{n'_{j-1}} + 2 n'_{j-1}	& \text{if } a_j = 1.
\end{array}
\right.
\end{equation}
\end{Thm}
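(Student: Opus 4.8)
The plan is to read the statement as a family of identities, one for each index $j$, and to verify each by a short case analysis on the leading ternary digit $a_{j}$ that is removed in passing from level $j$ to level $j-1$; the individual identities then chain together to give the announced computation of $\nuthree{n}$. A small fact used repeatedly is that a reflection $m\mapsto 3^{s}-m$ (or $m\mapsto 3^{s+1}-m$) leaves $\nuthree{m}$ unchanged on the relevant block — this is the integrated form of the two symmetries supplied by Propositions \ref{zeros-1} and \ref{prop-case2} — so that replacing $n_{j-1}$ by $n'_{j-1}=\min(n_{j-1},3^{j}-n_{j-1})$ never affects the valuation, and consecutive stages may be composed freely.

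When $a_{j}=0$ the number $n_{j}$ already has at most $j$ ternary digits, so $n_{j}=n'_{j-1}$ and there is nothing to prove. When $a_{j}=2$, write $n_{j}=2\cdot 3^{j}+n_{j-1}$ with $0\le n_{j-1}<3^{j}$; no reflection is triggered, $n'_{j-1}=n_{j-1}$, and Proposition \ref{zeros-1} gives $\nuthree{n_{j}}=\nuthree{2\cdot 3^{j}+n_{j-1}}=\nuthree{n_{j-1}}=\nuthree{n'_{j-1}}$, the endpoint values $n_{j-1}\in\{0,3^{j}\}$ being Corollary \ref{end-cases}.

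The case $a_{j}=1$ is the substantive one. Here $n_{j}=3^{j}+a$ with $0\le a<3^{j}$, and from $\muc{m-1}=2(\nuthree{m}-\nuthree{m-1})$ together with $\nuthree{3^{j}}=0$ (Corollary \ref{end-cases}) one telescopes
\begin{equation*}
\nuthree{3^{j}+a}=\tfrac12\sum_{i=0}^{a-1}\muc{3^{j}+i}.
\end{equation*}
Proposition \ref{prop-case1} rewrites each summand as $\muc{i}+2$ below the midpoint $\tfrac12 3^{j}$, as $\muc{i}$ at the midpoint $i=\tfrac12(3^{j}+1)$, and as $\muc{i}-2$ above it. If $a$ lies in the lower half then $n'_{j-1}=a$, only the $+2$ terms occur, and comparison with formula (\ref{nu3-tn}) for $\nuthree{a}$ yields exactly the additive correction in the statement. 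If $a$ lies in the upper half then $n'_{j-1}=3^{j}-a$; now Proposition \ref{prop-case2} folds the summands $\muc{3^{j}+i}$ with $i$ past the midpoint onto $-\muc{2\cdot 3^{j}-i+1}$, and this, together with Proposition \ref{zeros-1} and the auxiliary lemma on $\nuthree{3^{m}a-3^{n}b}$ that precedes Proposition \ref{prop-case2}, reassembles the telescoped sum into $\nuthree{3^{j}-a}=\nuthree{n'_{j-1}}$ plus the same correction; the midpoint $a=\tfrac12(3^{j}+1)$ and the endpoints are handled directly. I expect the only real difficulty to be this upper-half subcase: tracking how the reflection $i\mapsto 3^{j}+1-i$ interacts with the midpoint and with the residue of $i$ modulo $3$ (which governs the case split in Proposition \ref{prop-case2}), and checking that the $\pm2$'s left over after folding telescope precisely to the claimed linear term rather than to that term plus a bounded error. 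The digit-$0$ and digit-$2$ cases and the assembly of the chain are then routine.
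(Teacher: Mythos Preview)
The paper states Theorem~\ref{thm-3adic} without proof, so there is no argument in the text to compare your proposal against. Your overall plan---handling the leading digit $0$ trivially, the digit $2$ via Proposition~\ref{zeros-1}, and the digit $1$ by telescoping the $\mu_{3}$ sum and invoking Propositions~\ref{prop-case1} and~\ref{prop-case2}---is the natural route given the tools of Section~\ref{sec-three}, and the case split is correctly organised.

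There is, however, a concrete slip you should not wave past. In the lower-half digit-$1$ case you assert that the $+2$ terms from Proposition~\ref{prop-case1} produce ``exactly the additive correction in the statement'', i.e.\ $2n'_{j-1}$. But formula~(\ref{nu3-tn}) carries a factor $\tfrac12$: summing $\mu_{3}(i)+2$ over $a$ indices and halving yields $\nuthree{a}+a$, not $\nuthree{a}+2a$. A direct check confirms this: $13=3^{2}+4$ gives $\nuthree{13}=5=\nuthree{4}+4$, whereas the printed formula would predict $\nuthree{4}+2\cdot 4=9$; likewise $\nuthree{16}=2$, not $4$. So the theorem as printed almost certainly contains a typo (the correction should be $n'_{j-1}$, not $2n'_{j-1}$), and your telescoping in fact detects this rather than confirming the stated coefficient. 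You should flag the discrepancy explicitly.

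A smaller point: the reflection $\nuthree{a}=\nuthree{3^{j}-a}$ that lets you replace $n_{j-1}$ by $n'_{j-1}$ is not itself one of the stated propositions. It follows by induction on $j$: Proposition~\ref{zeros-1} identifies the outer thirds $[0,3^{j-1}]$ and $[2\cdot 3^{j-1},3^{j}]$, the integrated form of Proposition~\ref{prop-case2} gives the symmetry of $\nu_{3}\circ T$ on the middle third about $\tfrac{3^{j}}{2}$, and the inductive hypothesis handles the reflection inside the outer third. Make this step explicit; once it is in place, the upper-half subcase of digit $1$ reduces immediately to the lower-half one and the bookkeeping you worry about disappears.
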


\noindent
{\bf Note}. Observe that the time 
required to calculate $\nuthree{n}$ is $O(n^2\ln n)$  using the 
definition of $T(n)$. Using Proposition \ref{prop-valuetn} the computational
time reduces to $O(n)$. The method described in Theorem \ref{thm-3adic}
further reduces this time to $O(\ln n)$.  A similar algorithm can be developed
for $p=2$. \\

\noindent
{\bf Example}. Let $n=1280$, whose representation with base 3 is $1202102$. 
Then $k=6$ and we have

\begin{table}[h]
\begin{center}
\begin{tabular}{||c||r|r|r|r||}
\hline
$j$&$n_j$&$n_j$ (base 3)&$n'_j$	&$n'_j$ (base 3)\\\hline
6&1280&1202102&1280&1202102\\
5&551&202102&178&020121	\\
4&178&20121&178&20121\\
3&16&0102&16&0102\\
2&16&102&16&102\\
1&7&21&2&02\\
0&2&2&1&1\\\hline
\end{tabular}
\end{center}
\vskip 0.2in
\caption{The fast algorithm for $\nu_{3} \circ T$}
\end{table}

It follows that
\begin{eqnarray}
\nuthree{1280}  & = & 2 n'_5 + \nuthree{n'_5} \nonumber \\
& = & 2 n'_5+\nuthree{n_2} \nonumber \\
& = & 2 n'_5+2\nuthree{n'_1}+\nuthree{n_1} \nonumber \\
& = & 360. \nonumber 
\end{eqnarray}

\section{A generalization} \label{sec-gen}
\setcounter{equation}{0}

The sequence 
\begin{equation}
T_{p}(n) := \prod_{j=0}^{n-1} \frac{(pj+1)!}{(n+j)!},
\end{equation}
\noindent 
contains $T(n)$ of (\ref{T-seq}) as the special 
case $T(n) = T_{3}(n)$. In 
this section we present some elementary properties of this generalization.

\begin{Thm}
For a fixed prime $p \geq 3$, the numbers $T_{p}(n)$ are integers.
\end{Thm}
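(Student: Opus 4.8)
The plan is to show that $T_p(n)$ is an integer by applying the Cartwright--Kupka criterion (Theorem \ref{cart-kup}) to the ratio of factorials defining $T_p(n)$. Writing
\begin{equation}
T_{p}(n) = \frac{\prod_{j=0}^{n-1} (pj+1)!}{\prod_{j=0}^{n-1}(n+j)!},
\nonumber
\end{equation}
I would take the multiset $\{b_j\} = \{pj+1 : 0 \le j \le n-1\}$ in the numerator and $\{a_i\} = \{n+j : 0 \le j \le n-1\}$ in the denominator. By Theorem \ref{cart-kup}, it suffices to verify, for every integer $k \ge 2$, the inequality
\begin{equation}
\sum_{j=0}^{n-1} \left\lfloor \frac{n+j}{k} \right\rfloor \le \sum_{j=0}^{n-1} \left\lfloor \frac{pj+1}{k} \right\rfloor.
\label{prop-goal}
\end{equation}
Equivalently, via Legendre's formula one may phrase this as $\nu_q\big(\prod (n+j)!\big) \le \nu_q\big(\prod(pj+1)!\big)$ for every prime $q$; but the floor formulation is cleaner since it only needs to be checked for $k$ in the range $2 \le k \le pn-1$ (larger $k$ contribute nothing, as $pj+1 \le pn-1$).

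The key estimate is a comparison of the two floor-sums term by term after a suitable rearrangement. First I would handle the trivial reduction $\lfloor (pj+1)/k\rfloor \ge \lfloor pj/k \rfloor$ and, more importantly, observe that the numerator sum dominates $\sum_{j=0}^{n-1}\lfloor pj/k\rfloor$. The heart of the argument is then the inequality
\begin{equation}
\sum_{j=0}^{n-1}\left\lfloor \frac{n+j}{k}\right\rfloor \le \sum_{j=0}^{n-1}\left\lfloor \frac{pj}{k}\right\rfloor
\qquad\text{for } p \ge 3,
\nonumber
\end{equation}
which I expect to prove by a counting argument: $\sum_{j=0}^{n-1}\lfloor m_j/k\rfloor$ counts lattice points, namely pairs $(j,t)$ with $1 \le t$ and $kt \le m_j$. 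So the left side counts $\#\{(j,t): 1 \le t,\ kt \le n+j,\ 0 \le j \le n-1\}$ and the right side counts $\#\{(j,t): 1 \le t,\ kt \le pj,\ 0 \le j \le n-1\}$. One then exhibits an injection from the first set into the second; since $pj \ge 3j$ and $n+j$ ranges over $[n, 2n-1]$ while $pj$ ranges over a much longer arithmetic progression, such an injection should be built by sending $(j,t)$ to $(j', t)$ for an appropriate $j' = j'(j,t)$ with $pj' \ge n+j$, e.g. roughly $j' \approx (n+j)/p$ plus bookkeeping. An alternative, possibly smoother route is to use the already-established Corollary \ref{p-val} analogue: derive a closed form $\nu_q(T_p(n)) = \frac{1}{q-1}\big(\sum_{j=0}^{n-1} S_q(n+j) - \sum_{j=0}^{n-1}S_q(pj+1)\big)$ and show the digit-sum difference is nonpositive for all primes $q$, mimicking the Corollary after Proposition \ref{prop-valuetn}.

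The main obstacle will be the combinatorial/number-theoretic core: establishing \eqref{prop-goal} uniformly in $k$. The difficulty is that the inequality is genuinely false for $p = 2$ (indeed $T_2(n)$ need not be an integer), so the proof must use $p \ge 3$ in an essential way — presumably through the fact that the progression $\{pj : j < n\}$ covers residues mod $k$ densely enough, or that $pj$ grows fast enough to "catch up" with $n+j$. I would need to be careful with edge effects near $j = 0$ (where $pj+1 = 1$ contributes nothing) and with small $k$ dividing $p$. Once the key inequality is in hand, invoking Theorem \ref{cart-kup} closes the argument immediately.
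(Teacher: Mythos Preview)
Your high-level strategy --- apply the Cartwright--Kupka criterion (Theorem~\ref{cart-kup}) and reduce to the floor-sum inequality
\[
\sum_{j=0}^{n-1}\left\lfloor\frac{n+j}{k}\right\rfloor \le \sum_{j=0}^{n-1}\left\lfloor\frac{pj+1}{k}\right\rfloor
\]
--- is sound, and in fact matches one of the three proofs the paper gives. The problem is in your execution of the key step. You propose to discard the ``$+1$'' via $\lfloor(pj+1)/k\rfloor\ge\lfloor pj/k\rfloor$ and then prove the stronger inequality
\[
\sum_{j=0}^{n-1}\left\lfloor\frac{n+j}{k}\right\rfloor \le \sum_{j=0}^{n-1}\left\lfloor\frac{pj}{k}\right\rfloor.
\]
This inequality is \emph{false}. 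Take $p=3$, $n=2$, $k=2$: the left side is $\lfloor 2/2\rfloor+\lfloor 3/2\rfloor=2$, while the right side is $\lfloor 0/2\rfloor+\lfloor 3/2\rfloor=1$. (More generally, for $p=3$, $n=4$, $k=2$ one gets $10>8$.) The ``$+1$'' is not slack; at $j=0$ the term $pj=0$ contributes nothing, and this lost unit is exactly what makes the tight cases balance. Consequently your lattice-point injection idea, which aims at the stronger inequality, cannot succeed as stated: for fixed $t$ with $kt\le n$ there are $n$ admissible $j$'s on the left but only $n-\lceil kt/p\rceil$ admissible $j'$'s on the right, so no $t$-preserving injection exists.

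The paper handles the correct inequality (with the ``$+1$'' retained) quite differently. First it observes that it suffices to treat $p=3$, since $\lfloor(pj+1)/k\rfloor\ge\lfloor(3j+1)/k\rfloor$ for $p\ge 3$. Writing $\psi_k(n)=\sum_{j<n}\lfloor(3j+1)/k\rfloor-\sum_{j<n}\lfloor(n+j)/k\rfloor$ and $n=ck+r$, a direct computation shows $\psi_k(ck+r)=\psi_k(ck)+\psi_k(r)$ and $\psi_k(ck)=0$, so $\psi_k$ is $k$-periodic; one then verifies $\psi_k(r)\ge 0$ for $0\le r<k$ by an explicit formula. That periodicity reduction is the missing idea in your outline. (The paper's \emph{primary} proof avoids Cartwright--Kupka altogether: it uses the recurrence $T_p(n+1)=T_p(n)\cdot\dfrac{(pn+1)!\,n!}{(2n+1)!\,(2n)!}$ and factors the ratio as a product of binomial coefficients and factorials.)
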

\begin{proof}
Observe that
\begin{equation}
T_{p}(n+1) = T_{p}(n) \times \frac{(pn+1)! \, n!}{(2n+1)! \, (2n)!}.
\label{recur-11}
\end{equation}
Define 
\begin{equation}
x_{p}(n) := \frac{(pn+1)!}{((p-1)n+1)! \, n!} = \binom{pn+1}{n},
\end{equation}
\noindent
and observe that
\begin{equation}
\frac{(pn+1)! \, n!}{(2n+1)! \, (2n)!} = x_{p}(n) \times 
\frac{((p-1)n+1)!}{(2n+1)! \, (2n)!}  n!^{2}. 
\end{equation}
\noindent
Iterating this argument yields
\begin{equation}
\frac{(pn+1)! \, n!}{(2n+1)! \, (2n)!} = \prod_{r=0}^{k-1}x_{p-r}(n) 
\times \frac{((p-k)n+1)!}{(2n+1)! \, (2n)!} n!^{k+1}.
\end{equation}
\noindent
The choice  $k = p-4$ confirms that
\begin{equation}
\frac{(pn+1)! \, n!}{(2n+1)! \, (2n)!} = 
\binom{4n+1}{2n} \, n!^{p-3} 
\prod_{r=0}^{p-5}  \binom{(p-r)n+1}{n}
\nonumber
\end{equation}
\noindent
is an integer. The recurrence (\ref{recur-11}) and the initial condition 
$T_{p}(1) = 1$ now show that $T_{p}(n)$ is also an integer. The explicit
formula
\begin{equation}
T_{p}(n) = \prod_{j=1}^{n-1} \binom{4j+1}{2j} j!^{p-3} 
\prod_{r=0}^{p-5} \binom{(p-r)j+1}{j}
\end{equation}
\noindent
follows from the recurrence.  \\
\end{proof}

\begin{proof}
An alternative proof of the fact that 
$\begin{displaystyle} 
\frac{(pn+1)! n!}{(2n+1)! \, (2n)!} \end{displaystyle}$ 
is an integer was shown to us by Valerio de Angelis. Observe that, for 
$p \geq 4$, we have $(pn+1)! = N \times (4n+1)!$ for 
the integer $N = (4n+2)_{(p-4)n}$. Therefore 
\begin{equation}
\frac{(pn+1)! \, n!}{(2n+1)! \, (2n)!} = (4n+2)_{(p-4)n} \times 
\binom{4n+2}{2n} n!.
\end{equation}
\noindent
This leads to the explicit formula
\begin{equation}
T_{p}(n) = \prod_{j=1}^{n-1} (4j+2)_{(p-4)n} \binom{4j+1}{2j} j!.
\end{equation}
\end{proof}

\begin{proof}
A third proof using Theorem \ref{cart-kup} was shown to us by 
T. Amdeberhan. 
The required inequality states:
if $n,k,p \in \mathbb{N}$ and $p\geq 3$, then
$$\psi_k(n;p):=\sum_{j=0}^{n-1}\left\lfloor{{pj+1} \over {k} } \right\rfloor-
\sum_{j=0}^{n-1} \left\lfloor{{n+j} \over {k}} \right\rfloor\geq 0.$$
It suffices to prove the special
case $p=3$, i.e. $\psi_k(n;3)\geq 0$ which we denote by $\psi_k(n)$
for $k\geq 3, n\geq 1$.
\smallskip
\noindent
Write $n=ck+r$ where $0\leq r\leq k-1$. We approach a reduction 
process by breaking down the respective sums as follows. 
\begin{eqnarray}
\sum_{j=0}^{n-1} \left\lfloor{{3j+1} \over{k} } \right\rfloor
 & = & \sum_{j=0}^{ck-1} \left\lfloor{{3j+1} \over {k} } \right \rfloor+
\sum_{j=0}^{r-1} \left\lfloor{{3(ck+j)+1} \over {k} } \right\rfloor \nonumber \\
&= & \sum_{j=0}^{ck-1} \left\lfloor{{3j+1} \over {k} } \right\rfloor+ 3cr
+\sum_{j=0}^{r-1} \left\lfloor{{3j+1} \over {k} } \right\rfloor, \nonumber
\end{eqnarray}
\noindent
and 
\begin{eqnarray}
\sum_{j=0}^{n-1} \left\lfloor{{n+j} \over {k}} \right\rfloor & = & 
\sum_{j=0}^{ck-1} \left\lfloor{{ck + r +j} \over {k}} \right\rfloor
+2cr+\sum_{j=0}^{r-1} \left\lfloor{{r+j} \over {k}} \right \rfloor \nonumber \\
&=& \sum_{j=0}^{ck-1}\left\lfloor{{ck+j} \over{k}} \right\rfloor-
\sum_{j=0}^{r-1} \left\lfloor{{ck+j} \over {k} } \right \rfloor+
\sum_{j=0}^{r-1} \left\lfloor{{2ck+j} \over {k} } \right\rfloor
+2cr+\sum_{j=0}^{r-1} \left\lfloor{{r+j} \over {k} } \right\rfloor \nonumber \\
&=& \sum_{j=0}^{ck-1} \left\lfloor{{ck+j} \over {k} } \right \rfloor+
\sum_{j=0}^{r-1} \left\lfloor{{ck+j} \over {k}} \right\rfloor
+2cr+\sum_{j=0}^{r-1} \left\lfloor{{r+j} \over {k} } \right\rfloor \nonumber \\
&=& \sum_{j=0}^{ck-1} \left\lfloor{{ck+j} \over {k} } \right\rfloor+
cr+\sum_{j=0}^{r-1} \left\lfloor{{j} \over {k} } \right\rfloor
+2cr+\sum_{j=0}^{r-1} \left\lfloor{{r+j} \over {k} } \right\rfloor \nonumber \\
&=&\sum_{j=0}^{ck-1} \left\lfloor{{ck+j} \over {k} } \right\rfloor+
3cr+\sum_{j=0}^{r-1} \left\lfloor{{r+j} \over {k} } \right\rfloor. \nonumber 
\end{eqnarray}
Combining these expressions, we find that $\psi_k(ck+r)=\psi_k(ck)+\psi_k(r)$.
A similar argument with $r$ replaced by $k$ produces 
$\psi_k(ck+k)=\psi_k(ck)+\psi_k(k)$. We conclude $\psi_k$ is 
\it $k$-Euclidean, \rm i.e. 
$$\psi_k(ck+r)=c\psi_k(k)+\psi_k(r).$$
Therefore, we just need to verify the assertion $\psi_k(r)\geq 0$. In 
fact, we will strengthen it by giving an explicit formula in vectorial form
$$[\psi_k(0),\dots,
\psi_k(k-1)]=[0,0^{k^{\prime}},1,2,\dots,\lfloor{k^{\prime\prime}/2}\rfloor,
\lceil{k^{\prime\prime}/2}\rceil,\dots,2,1,0^{k^{\prime}}];$$
where $k^{\prime}=\lfloor{\frac{k+1}3}\rfloor, 
k^{\prime\prime}=k-1-2k^{\prime}$ and $0^{k^{\prime}}$ means $k^{\prime}$ 
consecutive zeros. This admits an elementary proof.  Note that $\psi_k(ck)=0$,
hence $\psi_k$ is \it $k$-periodic \rm and it
satisfies $\psi_k(ck+r)=\psi_k(r)$. 
\end{proof}

\medskip

We now discuss a recurrence for the valuation of the sequence $T_{p}(n)$. The 
special role of the prime $p=3$ becomes apparent. 

\begin{Thm}
Let $p$ be prime. Then the sequence $T_{p}(n)$ satisfies
\begin{equation}
\nu_{p}(T_{p}(pn)) = p \nu_{p}(T_{p}(n)) + \frac{1}{2}p(p-3)n^{2}.
\end{equation}
\end{Thm}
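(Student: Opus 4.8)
The plan is to reduce the claim to an identity about base-$p$ digit sums, in the spirit of the proof of Proposition \ref{thm-3symmetry}; the new feature for general $p$ is that the "polynomial" part of Legendre's formula no longer cancels and it is precisely this part that produces the term $\tfrac12 p(p-3)n^2$.

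First I would write, directly from $T_{p}(n) = \prod_{j=0}^{n-1}(pj+1)!/(n+j)!$ and $f_{p}(m)=\nu_{p}(m!)$,
$$\nu_{p}(T_{p}(n)) = \sum_{j=0}^{n-1} f_{p}(pj+1) - \sum_{j=0}^{n-1} f_{p}(n+j),$$
and then substitute Legendre's identity $f_{p}(m) = (m - S_{p}(m))/(p-1)$ to split $\nu_{p}(T_{p}(n)) = \tfrac{1}{p-1}\bigl(P(n)+D(n)\bigr)$, where
$$P(n) = \sum_{j=0}^{n-1}(pj+1) - \sum_{j=0}^{n-1}(n+j), \qquad D(n) = \sum_{j=0}^{n-1} S_{p}(n+j) - \sum_{j=0}^{n-1} S_{p}(pj+1).$$
An elementary summation gives $P(n) = \tfrac12 (p-3)n(n-1)$, hence $P(pn) - pP(n) = \tfrac12 p(p-1)(p-3)n^{2}$ and therefore $\tfrac{1}{p-1}\bigl(P(pn)-pP(n)\bigr) = \tfrac12 p(p-3)n^{2}$. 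Thus the theorem is equivalent to the digit-sum identity $D(pn) = p\,D(n)$.

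To prove $D(pn)=p\,D(n)$ I would use the base-$p$ analogue of Lemma \ref{3-rules} (which admits an elementary proof): for $0 \le a < p$ one has $S_{p}(pm+a)=S_{p}(m)+a$. Writing each index $j$ with $0\le j<pn$ as $j=pq+a$ with $0\le q<n$ and $0\le a<p$, one gets
$$\sum_{j=0}^{pn-1} S_{p}(pn+j) = \sum_{q=0}^{n-1}\sum_{a=0}^{p-1}\bigl(S_{p}(n+q)+a\bigr) = p\sum_{q=0}^{n-1} S_{p}(n+q) + \binom{p}{2}n,$$
and, using $S_{p}(pj+1)=S_{p}(j)+1$,
$$\sum_{j=0}^{pn-1} S_{p}(pj+1) = pn + \sum_{q=0}^{n-1}\sum_{a=0}^{p-1}\bigl(S_{p}(q)+a\bigr) = pn + p\sum_{q=0}^{n-1} S_{p}(q) + \binom{p}{2}n,$$
together with $\sum_{j=0}^{n-1} S_{p}(pj+1) = n + \sum_{j=0}^{n-1} S_{p}(j)$. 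Subtracting, the two terms $\binom{p}{2}n$ cancel and one is left with $D(pn) = p\sum_{q=0}^{n-1}S_{p}(n+q) - pn - p\sum_{q=0}^{n-1}S_{p}(q) = p\,D(n)$, which completes the argument.

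I do not expect a genuine obstacle here: the proof is essentially bookkeeping. The only points requiring care are keeping the signs straight when splitting $\nu_{p}(T_{p}(n))$ into its polynomial and digit-sum pieces, and checking that the auxiliary contributions $\binom{p}{2}n$ produced by reindexing the two sums are identical so that they cancel, which is exactly what forces the digit part to scale by $p$. One could instead prove $D(pn)=p\,D(n)$ by mimicking the three-term block computation of Proposition \ref{thm-3symmetry} verbatim, replacing residues modulo $3$ by residues modulo $p$; the computation above is the streamlined version of that.
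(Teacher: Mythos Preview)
Your proof is correct and follows essentially the same approach as the paper: both split $\nu_{p}(T_{p}(\cdot))$ via Legendre's formula into a polynomial part (which supplies the $\tfrac12 p(p-3)n^{2}$) and a digit-sum part, and both dispatch the digit-sum identity by reindexing $j=pq+a$ with $0\le a<p$ and using $S_{p}(pm+a)=S_{p}(m)+a$. The only cosmetic difference is that the paper indexes the denominator product as $\prod_{j=pn}^{2pn-1}j!$ and packages the digit-sum claim as $W_{p,n}=0$, whereas you phrase it equivalently as $D(pn)=p\,D(n)$.
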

\begin{proof}
Observe  that 
\begin{equation}
T_{p}(pn) = \prod_{j=0}^{pn-1} (pj+1)!/\prod_{j=pn}^{2pn-1} j!
\end{equation}
\noindent
and using Legendre's formula we obtain
\begin{equation}
(p-1) \nu_{p}(T_{p}(pn)) = \sum_{j=0}^{pn-1} pj+1 - S_{p}(pj+1) - 
\sum_{j=pn}^{2pn-1} j-S_{p}(j).
\end{equation}
\noindent
The terms independent of the function $S_{p}$ add up to $n^{2}p(p-3)/2$ and 
we obtain
\begin{equation}
\nu_{p}(T_{p}(pn)) - p \nu_{p}(T_{p}(n)) = 
\frac{1}{2}n^{2}p(p-3) + \frac{1}{p-1}W_{p,n},
\end{equation}
\noindent
where
\begin{equation}
W_{p,n} = - \sum_{j=0}^{pn-1}S_{p}(pj+1) + 
\sum_{j=pn}^{2pn-1} S_{p}(j) + 
p \sum_{j=0}^{n-1}S_{p}(pj+1) - p \sum_{j=0}^{n-1} S_{p}(n+j).
\end{equation}
\noindent
We now show that $W_{p,n} = 0$, this established the result. 

Use $S_{p}(pj+1) = 1 + S_{p}(j)$ to get that
\begin{equation}
W_{p,n} = - \sum_{j=0}^{pn-1}S_{p}(j) + 
\sum_{j=pn}^{2pn-1} S_{p}(j) + 
p \sum_{j=0}^{n-1}S_{p}(j) - p \sum_{j=n}^{2n-1} S_{p}(j).
\end{equation}
\noindent
In the second sum, write $j = pr+k$ with $0 \leq k \leq p-1$ and 
$n \leq r \leq 2n-1$, to obtain
\begin{eqnarray}
\sum_{j=pn}^{2pn-1} S_{p}(j) & = & \sum_{k=0}^{p-1} 
\sum_{r=n}^{2n-1} S_{p}(pr+k) \nonumber \\
& = & \sum_{r=n}^{2n-1} \sum_{k=0}^{p-1} \left( k + S_{p}(r) \right) 
\nonumber \\
& = & \frac{n}{2}p(p-1) + p \sum_{r=n}^{2n-1} S_{p}(r). 
\nonumber 
\end{eqnarray}
\noindent
This term is now combined with the fourth one to simplify the sum. A similar
calculation on the first term gives the result. Indeed,
\begin{eqnarray}
\sum_{j=0}^{pn-1} S_{p}(j) & = & 
\sum_{k=0}^{p-1} \sum_{r=0}^{n-1} S_{p}(pr+k) \nonumber \\
& = & \sum_{k=0}^{p-1} \sum_{r=0}^{n-1} \left( k + S_{p}(r) \right) 
\nonumber \\
& = & \frac{n}{2}p(p-1) + p \sum_{r=0}^{n-1} S_{p}(r). \nonumber 
\end{eqnarray}
\end{proof}

\begin{Cor}
For $p$ a prime, we have
\begin{equation}
\nu_{p}(T_{p}(p^{n})) = \frac{p^{n}(p-3)(p^{n}-1)}{2(p-1)}.
\end{equation}
\end{Cor}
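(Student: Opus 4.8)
The plan is to deduce the closed form directly from the preceding recurrence $\nu_{p}(T_{p}(pn)) = p\,\nu_{p}(T_{p}(n)) + \tfrac12 p(p-3)n^{2}$ by induction on $n$, specializing the recurrence to the sub-sequence $m = p^{n}$. Since $T_{p}(1) = 1$ gives $\nu_{p}(T_{p}(p^{0})) = 0$, and the proposed formula evaluates to $\tfrac{p^{0}(p-3)(p^{0}-1)}{2(p-1)} = 0$ as well, the base case $n = 0$ holds. This is the only initialization needed.

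For the inductive step, I would assume $\nu_{p}(T_{p}(p^{n})) = \dfrac{p^{n}(p-3)(p^{n}-1)}{2(p-1)}$ and apply the recurrence with $n$ replaced by $p^{n}$, so that $pn$ becomes $p^{n+1}$:
\begin{equation}
\nu_{p}(T_{p}(p^{n+1})) = p\cdot\frac{p^{n}(p-3)(p^{n}-1)}{2(p-1)} + \frac{1}{2}p(p-3)p^{2n}.
\end{equation}
Factoring out $\tfrac{p(p-3)}{2(p-1)}$ reduces the claim to the elementary identity
\begin{equation}
p^{n}(p^{n}-1) + (p-1)p^{2n} = p^{n}(p^{n+1}-1),
\end{equation}
which follows by expanding both sides. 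Substituting this back yields exactly $\nu_{p}(T_{p}(p^{n+1})) = \dfrac{p^{n+1}(p-3)(p^{n+1}-1)}{2(p-1)}$, completing the induction.

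There is essentially no serious obstacle here: the only point requiring a moment of care is confirming that the quantity $\dfrac{p(p-3)}{2(p-1)}$ multiplied by $p^{n}(p^{n+1}-1)$ is genuinely an integer, i.e.\ that the right-hand side is a legitimate valuation; but this is automatic because the left-hand side $\nu_{p}(T_{p}(p^{n+1}))$ is an integer by the integrality of $T_{p}$ established earlier. One could alternatively note directly that $p-1 \mid p^{n}(p^{n+1}-1)$ since $p \equiv 1 \pmod{p-1}$, so $p^{n+1}-1 \equiv 0 \pmod{p-1}$, making the divisibility transparent. Thus the whole argument is a two-line induction plus one routine polynomial identity.
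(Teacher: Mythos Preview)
Your proof is correct and follows essentially the same approach as the paper: the paper substitutes $n \mapsto p^{n}$ into the recurrence to obtain $\nu_{p}(T_{p}(p^{n+1})) = p\,\nu_{p}(T_{p}(p^{n})) + \tfrac{1}{2}(p-3)p^{2n+1}$ and then simply says ``iterating this identity yields the result,'' whereas you carry out that iteration explicitly by induction and supply the algebraic verification. Your additional remark on the integrality of the closed form is a nice touch but not present in the paper's terse version.
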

\begin{proof}
Replace $n$ by $p^{n}$ in the Theorem to obtain
\begin{equation}
\nu_{p}(T_{p}(p^{n+1})) = p \nu_{p}(T_{p}(p^{n})) + \frac{1}{2}(p-3)p^{2n+1}.
\end{equation}
\noindent
Iterating this identity yields the result.
\end{proof}

\medskip

\noindent
{\bf Problem}. The sequence $T_{p}(n)$ comes as a formal generalization of 
the original sequence $T_{3}(n)$ that appeared in counting alternating 
symmetric matrices. This begs the question: {\em what do} 
$T_{p}(n)$ {\em count?} 

\bigskip

\no
{\bf Acknowledgments}. 
The authors wish to thank Tewodros Amdeberhan, Valerio de Angelis  and 
A. Straub for many conversations 
about this paper.  Marc Chamberland helped in the experimental 
discovery of the generalization presented in Section \ref{sec-gen}. 
The work of the  first author was partially funded by
$\text{NSF-DMS } 0713836$. \\

\bigskip

\end{document}